\newcommand{\R}{\mathbb{R}}
\newcommand{\N}{\mathbb{N}}
\newtheorem{theorem}{Theorem}[section]
\newtheorem{lemma}[theorem]{Lemma}
\theoremstyle{definition}
\newtheorem*{notation}{Notation}
\begin{document}
	\title{\LARGE\bf{ Normalized solutions for nonautonomous Schr\"{o}dinger-Poisson equations}$\thanks{{\small This work was partially supported by NSFC(11901532).}}$ }
	\date{}
	\author{  Yating Xu, Huxiao Luo$\thanks{{\small Corresponding author. E-mail: 1151980124@qq.com (Y. Xu), luohuxiao@zjnu.edu.cn (H. Luo).}}$\\
		\small Department of Mathematics, Zhejiang Normal University, Jinhua, Zhejiang, 321004, P. R. China}
	\maketitle
\begin{center}
\begin{minipage}{13cm}
\par
\small  {\bf Abstract:} In this paper, we study the existence of normalized solutions for the nonautonomous Schr\"{o}dinger-Poisson equations
\begin{equation}\nonumber
-\Delta u+\lambda u +\left(\vert x \vert ^{-1} * \vert u \vert ^{2} \right) u=A(x)|u|^{p-2}u,\quad \text{in}~\R^3,
\end{equation}
where $\lambda\in\R$, $A \in L^\infty(\R^3)$ satisfies some mild conditions. Due to the nonconstant potential $A$, we use Pohozaev manifold to recover the compactness for a minimizing sequence. For $p\in (2,3)$, $p\in(3,\frac{10}{3})$ and $p\in(\frac{10}{3}, 6)$, we adopt different analytical techniques to overcome the difficulties due to the presence of three terms in the corresponding energy functional which scale differently, respectively.
 \vskip2mm
 \par
 {\bf Keywords:} Nonautonomous Schr\"{o}dinger-Poisson equations; Normalized solution; Variational methods.
 \vskip2mm
 \par
 {\bf MSC(2010): }35J47, 35C08, 35J50

\end{minipage}
\end{center}

{\section{Introduction}}
\setcounter{equation}{0}
We consider the nonautonomous Schr\"{o}dinger-Poisson system of the type
\begin{equation}\label{t1.1.0}
\left\{
\begin{array}{ll}
\aligned
&i\partial_t\psi=-\Delta \psi+\left(\frac{1}{|x|}\ast |\psi|^{2}\right)\psi-A(x)|\psi|^{p-2}\psi,\quad t\in\R,~ x\in\R^3,\\
&\psi(0,x)=\psi_0(x)\in H^1(\R^N,\mathbb{C}),
\endaligned
\end{array}
\right.
\end{equation}
where $\psi(t, x) : \R^+\times\R^3\to\mathbb{C}$ is the wave function, $p\in(2,6)$, $A \in C^1(\R^3,\R)$, $\frac{1}{|x|}\ast |\psi|^{2}$ represents an internal potential for a nonlocal self-interaction of the
wave function $\psi$. The Schr\"{o}dinger-Poisson system is obtained by
approximation of the Hartree-Fock equation describing a quantum mechanical system of many particles,
see for instance \cite{4,17,15}. The system has been extensively studied
and is quite well understood in the autonomous case $A\equiv1$. In particular, variational methods
are employed to derive existence and multiplicity results of standing waves solutions
\cite{2,1,20} and for standing wave solutions with prescribed $L^2-$norm
\cite{jlt2020nonautonomous,jg2011scaling,nonexistence,sm2014stablestandingwaves}.

In this article, we focus on the nonautonomous case: $A(x)$ is not necessarily a constant function. As far as we know, there is no result about the existence of normalized solutions for the nonautonomous Schr\"{o}dinger-Poisson systems. We study the existence of the standing wave solutions of \eqref{t1.1.0}, which have the form $\psi(t, x) = e^{i\lambda t}u(x)$, where $\lambda\in\R$
is the frequency and $u(x)$ solves the following elliptic equation
\begin{equation}\label{1.1.0}
\left\{
\begin{array}{ll}
\aligned
&-\Delta u+\lambda u +\left(\vert x \vert ^{-1} * \vert u \vert ^{2} \right) u=A(x)|u|^{p-2}u,\quad \text{in}~\R^3, \\
&\int_{\mathbb{R}^{3}}|u|^{2}dx=\int_{\R^N}|\psi_0|^2dx:=c,\quad u\in H^1(\mathbb{R}^3),
\endaligned
\end{array}
\right.
\end{equation}
where the constraint $\int_{\mathbb{R}^{N}}|u|^{2}dx=c$ is natural due to the conservation of mass. By variational method, prescribed $L^2-$norm solution
(normalized solution) of $(\ref{1.1.0})$ can be searched as critical points of $I(u)$ constrained on $S(c)$, and the parameter $\lambda$ appears as a Lagrangian multiplier, where
\begin{equation}\label{I}
	I(u)=\frac{1}{2}\int_{\R^3}|\nabla u |^2dx+\frac{1}{4}\int_{\R^3}\left(\vert x \vert ^{-1} * \vert u \vert ^{2} \right)|u|^{2}dx-\frac{1}{p}\int_{\R^3}A(x)|u|^{p}dx,
\end{equation}
and
$$S(c)=\left\{u \in H^{1}\left(\R^3\right) :\int_{\R^3}|u|^2dx=c\right\}.
$$
To study (\ref{1.1.0}) variationally, we need to recall the following Hardy--Littlewood--Sobolev inequality.
\begin{lemma}(\cite[Theorem 4.3]{MR1817225}) Let $s,r>1$ and $0<\mu<N$ with $1/s+\mu/N+1/r=2$, $f\in L^{s}(\R^N)$ and $h\in L^{r}(\R^N)$. There exists a sharp constant $C(N,\mu,s,r)$, independent of $f,h$, such that
\begin{equation}\label{HLS1}
\int_{\mathbb{R}^{N}}\int_{\mathbb{R}^{N}}\frac{f(x)h(y)}{|x-y|^{\mu}}dxdy\leq C(N,\mu,s,r) \|f\|_{s}\|h\|_{r}.
\end{equation}
Here, $\|\cdot\|_s$ denotes the usual Lebesgue norm in $L^s(\R^N)$.
\end{lemma}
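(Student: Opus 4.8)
\medskip
\noindent\textbf{Proof sketch.} This is a classical inequality and I would establish it in two stages: first the inequality with \emph{some} finite constant — which already gives the existence of a sharp constant, namely $C(N,\mu,s,r)=\sup\{T(f,h):\|f\|_s=\|h\|_r=1\}$ where $T(f,h):=\iint_{\R^N\times\R^N}f(x)h(y)|x-y|^{-\mu}\,dx\,dy$ — and then a remark on how one would reach the \emph{optimal} constant.

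For the first stage I would reduce, by splitting $f,h$ into positive and negative parts and using the density of $C_c^\infty(\R^N)$, to the case $f,h\geq 0$. Writing $T(f,h)=\int_{\R^N}f\,\big(|x|^{-\mu}\ast h\big)\,dx$ and applying H\"older's inequality shows that \eqref{HLS1} is equivalent to the bound $\big\||x|^{-\mu}\ast h\big\|_{s'}\leq C\|h\|_r$ with $\tfrac1{s'}=\tfrac1r+\tfrac\mu N-1$ — the exponent identity here is exactly the balance condition $1/s+\mu/N+1/r=2$ combined with $1/s+1/s'=1$, and one checks that $1<r<s'<\infty$ because $s,r>1$ and $0<\mu<N$. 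The kernel $K(x)=|x|^{-\mu}$ satisfies $|\{K>t\}|=|B_1|\,t^{-N/\mu}$, so it lies in the weak space $L^{N/\mu,\infty}(\R^N)$ (but not in $L^{N/\mu}$), and I would then fix a radius $\delta>0$, split $K=K\mathbf{1}_{B_\delta}+K\mathbf{1}_{B_\delta^c}$, note that $K\mathbf{1}_{B_\delta}\in L^{p_-}$ for all $p_-<N/\mu$ and $K\mathbf{1}_{B_\delta^c}\in L^{p_+}$ for all $p_+>N/\mu$, apply Young's convolution inequality to the two pieces (getting $L^{q_-}$ and $L^{q_+}$ bounds with $q_-<s'<q_+$), and close the argument by Marcinkiewicz interpolation — equivalently, by estimating $|\{|K\ast h|>\lambda\}|$ and optimizing over $\delta=\delta(\lambda,\|h\|_r)$. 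This yields \eqref{HLS1} with a finite constant, so the supremum defining $C(N,\mu,s,r)$ is finite and depends only on $N,\mu,s,r$.

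To locate the optimal constant one would instead use rearrangement: since $|x-y|^{-\mu}$ is strictly symmetric decreasing in $x-y$, the Riesz rearrangement inequality gives $T(f,h)\leq T(f^{*},h^{*})$ while preserving the norms, so it suffices to maximize over radially symmetric nonincreasing $f,h$; expanding all three factors by the layer-cake formula then reduces $T$ to a triple integral of $\big|\{|x|<R,\,|y|<\rho,\,|x-y|<t\}\big|\leq |B_1|^2\big(\text{product of the two smallest of }R,\rho,t\big)^N$, which again is finite precisely under the balance condition. The step I would expect to be the genuine obstacle in a self-contained treatment is the \emph{identification} of the extremals and the explicit value of $C(N,\mu,s,r)$: in the conformal (diagonal) case this needs Lieb's competing-symmetries and conformal-invariance argument, and for general exponents the sharp constant is not known in closed form. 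Since only finiteness is used in what follows, I would simply invoke \cite[Theorem 4.3]{MR1817225} for the precise statement.
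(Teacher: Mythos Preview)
Your sketch is correct and follows a standard route to the Hardy--Littlewood--Sobolev inequality (reduction to the convolution estimate via H\"older, weak-$L^{N/\mu}$ membership of the kernel, splitting plus Young/Marcinkiewicz, then Riesz rearrangement for the sharp-constant discussion). There is no gap.

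As for comparison: the paper does not prove this lemma at all --- it is stated as a quotation of \cite[Theorem~4.3]{MR1817225} and used as a black box, so your write-up goes well beyond what the paper provides. In context that is entirely reasonable, since the inequality is classical and only its finiteness (not the sharp value of $C(N,\mu,s,r)$) is needed downstream; your closing remark that you would ``simply invoke \cite[Theorem~4.3]{MR1817225}'' is exactly what the authors do.
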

By (\ref{HLS1}) and Sobolev inequality, the integral
$$
\int_{\mathbb{R}^{3}}\int_{\mathbb{R}^{3}}\frac{ |u(x)|^2|u(y)|^2}{|x-y|} \mathrm{d} x\mathrm{d} y \leq \|u\|^4_{12/5}
$$
is well defined in $H^1(\R^N)$.

For $L^2-$subrcritical case $p\in (2,\frac{10}{3})$ and for $L^2-$superrcritical case $p\in (\frac{10}{3},6)$, the energy functional $I$ possess different structures. Indeed, by Gagliardo-Nirenberg inequality it is easy to see that $$\gamma(c):=\inf\limits_{u\in S(c)} I(u)>-\infty$$\label{Inf_c} for $p\in (2,\frac{10}{3})$. Specifically, for the Coulomb-Sobolev critical case $p=3$, the nonlocal term $\int_{\R^3}(|x|^{-1}\ast u^2)u^2dx$ and the local autonomous term $\int_{\R^3}|u|^pdx$ have the same scaling for the preserving $L^2-$norm transformation $u^t(\cdot)= t^\frac{3}{2} u(t\cdot)$. For $p\in (2,3)$ and $p\in(3,\frac{10}{3})$, by different analytical techniques, we find a global minimizer for $\gamma(c)$, which is also a normalized (ground state) solution to \eqref{1.1.0}.
\begin{theorem}\label{1 th}
Suppose that
\begin{itemize}
			\item[(A1)] $A \in C^1(\R^3)$, $\lim\limits_{|x|\to\infty} A(x) = A_{\infty}\in (0,\infty)$ and $A(x) \geq A_{\infty} $ for all $x\in \R^3 $;
\item[(A2)] $ t \mapsto \frac{3}{2} (p-2)A(tx) - \nabla A(tx)\cdot (tx)$ is nonincreasing on  $t\in (0, \infty)$ for every $x\in \R^3$.
\end{itemize}
If $p\in(2,3)$, then for any $c\in(0,+\infty)$ equation $\eqref{1.1.0}$ possesses a normalized solution $(u_c, \lambda_c)$;
If $p\in(3,\frac{10}{3})$, then there exists a $\bar{c}>0$ such that for any $c\in[\bar{c},+\infty)$ equation $\eqref{1.1.0}$ possesses a normalized solution $(u_c, \lambda_c)$.
\end{theorem}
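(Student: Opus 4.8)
The plan is to obtain $(u_c,\lambda_c)$ as a global minimizer of $I$ on $S(c)$ together with its Lagrange multiplier, recovering the compactness of a minimizing sequence through the Pohozaev manifold. Throughout write $D(u)=\int_{\R^3}(|x|^{-1}*|u|^2)|u|^2\,dx$ and $\beta=\tfrac{3(p-2)}{2}\in(0,2)$, let $I_\infty$ denote the functional obtained by replacing $A$ with the constant $A_\infty$, and set $\gamma(c)=\inf_{S(c)}I$, $\gamma_\infty(c)=\inf_{S(c)}I_\infty$; since $(A1)$ gives $A\ge A_\infty$ pointwise, $I\le I_\infty$ and $\gamma(c)\le\gamma_\infty(c)$. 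First I would note that, by the Gagliardo--Nirenberg inequality and $p<\tfrac{10}{3}$, $\tfrac1p\int A|u|^p\le C(\|A\|_\infty,p)\,c^{(p-\beta)/2}\|\nabla u\|_2^{\beta}$ with $\beta<2$, so $I(u)\ge\tfrac12\|\nabla u\|_2^2+\tfrac14 D(u)-C'(c)\|\nabla u\|_2^{\beta}$; hence $\gamma(c)>-\infty$ and every minimizing sequence is bounded in $H^1(\R^3)$.

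Second, I would check that $\gamma(c)<0$ --- for all $c>0$ when $p\in(2,3)$, and for $c\ge\bar c$ when $p\in(3,\tfrac{10}{3})$. Testing along the $L^2$-preserving fibers $u^t(x)=t^{3/2}u(tx)$ gives
\[
I(u^t)=\frac{t^2}{2}\|\nabla u\|_2^2+\frac t4 D(u)-\frac{t^{\beta}}{p}\int_{\R^3}A(z/t)|u(z)|^p\,dz .
\]
For $p$ near $2$ (so $\beta<1$) the nonlinear term dominates as $t\to0^+$ and $\gamma(c)<0$ is immediate; for the remaining $p\in(2,3)$ the obstruction is the Coulomb term $\tfrac14 t\,D(u)$, which I would defeat by a configuration of many widely separated, slowly spreading bumps of equal mass, along which the kinetic and Coulomb contributions vanish while the nonlinear one does not. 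For $p\in(3,\tfrac{10}{3})$ I would instead test with a single scaled Gagliardo--Nirenberg extremal of mass $c$: the kinetic, Coulomb and nonlinear terms then scale like $c$, $c^2$ and $c^{p/2}$, and a one-variable computation shows the nonlinear term wins once $c$ exceeds a threshold $\bar c$ --- precisely the range on which the argument below produces a minimizer.

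Third, and this is the core, I would recover compactness. Let $\{u_n\}\subset S(c)$ be a minimizing sequence; it is bounded, so $u_n\rightharpoonup u_c$ in $H^1(\R^3)$ along a subsequence. Vanishing is excluded: if $\sup_y\int_{B_1(y)}|u_n|^2\to0$ then Lions' lemma gives $u_n\to0$ in $L^q(\R^3)$ for every $q\in(2,6)$, whence $\int A|u_n|^p\to0$ and $D(u_n)\le\|u_n\|_{12/5}^4\to0$, forcing $\liminf I(u_n)\ge0$, contradicting $\gamma(c)<0$. To exclude loss of mass I would write $u_n=u_c+v_n$ with $v_n\rightharpoonup0$, use a Brézis--Lieb splitting of all three terms, and combine it with $A\to A_\infty$ at infinity (where the mass of $v_n$ drifts), obtaining $I(u_n)=I(u_c)+I_\infty(v_n)+o(1)$ and $\|u_n\|_2^2=\|u_c\|_2^2+\|v_n\|_2^2+o(1)$; if $\|u_c\|_2^2=\alpha\in[0,c)$ this yields $\gamma(c)\ge\gamma(\alpha)+\gamma_\infty(c-\alpha)$ with the convention $\gamma(0)=0$. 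The contradiction comes from the strict sub-additivity $\gamma(c)<\gamma(\alpha)+\gamma_\infty(c-\alpha)$ for $\alpha\in[0,c)$, and this is exactly where the Pohozaev manifold and $(A2)$ enter: $(A2)$ is the monotonicity that forces the fiber $t\mapsto I(u^t)$ to have a unique critical point, so that $\mathcal P(c)=\{u\in S(c):P(u)=0\}$ with $P(u)=\|\nabla u\|_2^2+\tfrac14 D(u)-\tfrac1p\int[\beta A-\nabla A\cdot x]|u|^p$ is a genuine natural constraint on which $\gamma(c)$ is attained by rescaling each competitor; from this one extracts the continuity and strict monotonicity of $\gamma$ needed for $\alpha\in(0,c)$, while $\gamma(c)\le\gamma_\infty(c)$ together with $A\ge A_\infty$ and $A\not\equiv A_\infty$ (placing a $\gamma_\infty$-minimizer where $A>A_\infty$ strictly lowers the energy) handles $\alpha=0$, the degenerate autonomous case $A\equiv A_\infty$ being covered by compactness up to translation. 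Hence $\|u_c\|_2^2=c$, so $u_n\to u_c$ in $L^2(\R^3)$, and by interpolation and weak lower semicontinuity $D(u_n)\to D(u_c)$, $\int A|u_n|^p\to\int A|u_c|^p$ and $I(u_c)\le\gamma(c)$; thus $u_c$ is a minimizer and $u_n\to u_c$ in $H^1(\R^3)$. Finally, the Lagrange multiplier rule on the manifold $S(c)$ produces $\lambda_c\in\R$ with $I'(u_c)+\lambda_c u_c=0$ in $H^{-1}(\R^3)$, i.e. $(u_c,\lambda_c)$ solves \eqref{1.1.0}; membership $u_c\in\mathcal P(c)$ reproduces the associated Pohozaev identity, and $(A2)$ can be invoked once more to control the sign of $\lambda_c$.

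The main obstacle is the strict sub-additivity inequality in the third step. The nonlocal Coulomb term does not split under an arbitrary decomposition of $u_n$ --- only under mass escaping to infinity --- and it scales differently ($t^1$) from the kinetic ($t^2$) and local ($t^{\beta}$) terms, so the scaling and comparison arguments that would be routine for a purely local problem must be routed through the Pohozaev-manifold description supplied by $(A2)$. This, rather than any single estimate, is the crux, and it is also what forces the different treatments of $p\in(2,3)$ and $p\in(3,\tfrac{10}{3})$ and the appearance of the threshold $\bar c$.
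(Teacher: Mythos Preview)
Your overall plan---minimize $I$ on $S(c)$, rule out vanishing through $\gamma(c)<0$ and Lions' lemma, rule out dichotomy through strict sub-additivity, then invoke the Lagrange multiplier rule---is the paper's. The gap is in the strict sub-additivity step, which is the heart of the argument and which you handle only by asserting that $(A2)$ forces the $L^2$-preserving fiber $t\mapsto I(u^t)$ to have a \emph{unique} critical point, so that $\mathcal P(c)$ becomes a natural constraint from which ``continuity and strict monotonicity of $\gamma$'' can be read off. That assertion is false in the $L^2$-subcritical regime. With $\beta=\tfrac{3(p-2)}{2}\in(0,2)$ one has $I(u^t)\to 0$ as $t\to 0^+$ and $I(u^t)\to+\infty$ as $t\to\infty$; for $\beta>1$ (i.e.\ $p>\tfrac83$, in particular for all of $p\in(3,\tfrac{10}{3})$) the derivative $\tfrac{d}{dt}I(u^t)$ is positive both near $0$ and near $\infty$, so the fiber has either \emph{zero} or \emph{two} critical points, and many $u\in S(c)$ never meet $\mathcal P(c)$ under this rescaling. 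Even where the fiber does carry a unique minimum, you do not indicate how strict monotonicity of $\gamma$ is to be extracted from it; that deduction is precisely the difficulty.

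The paper obtains strict sub-additivity by an entirely different device, the mass-\emph{changing} scaling $u_\theta(x)=\theta^{2}u(\theta x)$, under which $\|u_\theta\|_2^2=\theta\|u\|_2^2$ and the kinetic, Coulomb and nonlinear terms all scale as $\theta^{3},\theta^{3},\theta^{2p-3}$. For $p\in(3,\tfrac{10}{3})$ this yields $\gamma(\theta c)\le I((u_c)_\theta)<\theta^{3}\gamma(c)$ for $\theta>1$ whenever a minimizer $u_c$ exists, hence $c\mapsto\gamma(c)/c$ is strictly decreasing past $\bar c$. For $p\in(2,3)$ the same scaling first shows $\gamma(s)<0$ for all small $s$ (since $2p-3<3$), so your many-bump construction is unnecessary; strict monotonicity then needs a bootstrap: one proves $\gamma(s)/s\to 0$ as $s\to 0$, locates an auxiliary mass $c_0\le c$ where $\gamma(s)/s$ attains its minimum on $(0,c]$, shows $\gamma(c_0)$ is achieved directly, and then uses the Pohozaev identity \emph{for that minimizer} together with a second scaling $\bar u\mapsto \theta^{-1/2}\bar u(\cdot/\theta)$ to force $c_0=c$. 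Thus $(A2)$ and the Pohozaev structure are indeed used, but on the solution side---as an identity satisfied by an already-constructed minimizer---not to project generic competitors onto $\mathcal P(c)$.
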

\begin{notation}
It is a challenging problem to prove the existence of normalized solutions for the Coulomb-Sobolev critical case $p=3$.
\end{notation}

For $L^2-$supercritical case $p\in (\frac{10}{3},6)$, $I(u)$ has no lower bound on $S(c)$.
Inspired by \cite{sx2020nonautonomous},
we establish the existence of normalized solutions to the non-autonomous Schr\"{o}dinger-Poisson equation $\eqref{1.1.0}$ by using the Pohozaev manifold
\begin{equation}\label{Gamma def}
	\mathcal{P}(c)=\left\{u\in S(c): P(u):=\dfrac{\partial}{\partial t}I(u^t)\big|_{t=1}=0\right\},
\end{equation}
where $u^t(x):=t^{\frac{3}{2}}u(tx)$ for all $t>0$ and $x\in \R^3$, and $u^t\in S(c)$ if $u\in S(c)$.
\begin{theorem}\label{2 th}
Let $\frac{10}{3}<p<6$. Assume that $(A1)$, $(A2)$ and
\begin{itemize}
	\item[(A3)]$ t \mapsto t^{\frac{3}{2}}A(tx)$is nondecreasing on $(0, \infty)$ for every $x\in \R^3$.
\end{itemize}
Then $\forall c>0$, $\eqref{1.1.0}$ possesses a solution $(\bar{u}_c,\lambda_c)\in S(c)\times \mathbb{R}$ such that
	\begin{equation}\nonumber
		I(\bar{u}_c)=\inf\limits_{u\in \mathcal{P}(c)}I(u)=\inf\limits_{u\in S(c)}\max\limits_{t>0}I(u^t)>0.
	\end{equation}
\end{theorem}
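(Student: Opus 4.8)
My plan is to realize $m(c):=\inf_{u\in\mathcal P(c)}I(u)$ as a scaling–minimax level on $S(c)$ and to use the Pohozaev identity together with $(A1)$–$(A3)$ to restore compactness. Write $D(u):=\int_{\R^3}\bigl(|x|^{-1}*|u|^2\bigr)|u|^2\,dx$ and $\alpha:=\tfrac32(p-2)$, so that $\alpha\in(2,6)$ precisely when $p\in(\tfrac{10}{3},6)$, and note
\[
I(u^t)=\tfrac{t^2}{2}\!\int_{\R^3}\!|\nabla u|^2\,dx+\tfrac{t}{4}D(u)-\tfrac{t^{\alpha}}{p}\!\int_{\R^3}\!A(x/t)|u|^p\,dx,\qquad
P(u)=\!\int_{\R^3}\!|\nabla u|^2\,dx+\tfrac14 D(u)-\tfrac1p\!\int_{\R^3}\!B(x)|u|^p\,dx,
\]
where $B(x):=\tfrac32(p-2)A(x)-\nabla A(x)\cdot x$ is the function in $(A2)$. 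First I would analyze the fiber map $g_u(t):=I(u^t)$ for $u\in S(c)$: since $\alpha>2$, $g_u(t)=\tfrac t4 D(u)+O(t^2)\to0^+$ as $t\to0^+$ while $g_u(t)\to-\infty$ as $t\to\infty$ (the nonlinear term scales like $t^{\alpha}$). To get a \emph{unique} maximizer I would show $t\mapsto g_u'(t)/t$ is strictly decreasing: $\tfrac1{4t}D(u)$ is decreasing, and $g_u'(t)/t=\int|\nabla u|^2+\tfrac1{4t}D(u)-\tfrac1p t^{\alpha-2}\int B(x/t)|u|^p\,dx$, where $t^{\alpha-2}$ is increasing (as $\alpha>2$) and, by $(A2)$ together with $B\ge\tfrac32(p-2)A_\infty>0$ (obtained by letting $t\to\infty$ in $(A2)$ and using $A(x)\to A_\infty$), the map $t\mapsto\int B(x/t)|u|^p\,dx$ is nondecreasing and positive. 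This gives a unique $t_u$ with $u^{t_u}\in\mathcal P(c)$ and $I(u^{t_u})=\max_{t>0}I(u^t)$; hence $\mathcal P(c)\ne\emptyset$, $m(c)=\inf_{\mathcal P(c)}I=\inf_{u\in S(c)}\max_{t>0}I(u^t)$, and since $g_u''(t_u)<0$ the implicit function theorem makes $\mathcal P(c)$ a $C^1$ manifold of codimension one in $S(c)$ on which every constrained critical point of $I|_{S(c)}$ lies.

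Next I would establish $m(c)>0$ and produce a Palais–Smale sequence. Dropping the nonnegative term $\tfrac t4 D(u)$, using $A\le\|A\|_\infty$ and Gagliardo–Nirenberg $\|u\|_p^p\le C\|\nabla u\|_2^{\alpha}c^{(p-\alpha)/2}$, one gets $\max_{t>0}I(u^t)\ge\max_{t>0}\bigl(\tfrac{t^2}{2}\|\nabla u\|_2^2-\tfrac{C\|A\|_\infty c^{(p-\alpha)/2}}{p}\,t^{\alpha}\|\nabla u\|_2^{\alpha}\bigr)$, and a one–variable optimization shows the right–hand side equals a constant $m_0=m_0(c)>0$ independent of $u$ (positivity uses $\alpha>2$), so $m(c)\ge m_0>0$. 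I would then check $m(c)$ is a mountain–pass level for $I|_{S(c)}$ relative to the scaling: working on $\R\times S(c)$ with $H(s,u):=u^{e^s}$ and $\widetilde I(s,u):=I(H(s,u))$, the geometry is of mountain–pass type and Jeanjean's minimax/deformation argument yields $(u_n)\subset S(c)$ with $I(u_n)\to m(c)$, $I|_{S(c)}'(u_n)\to0$ and $P(u_n)\to0$ (the last from the $\partial_s$–direction).

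The main difficulty — and where $(A2)$–$(A3)$ really enter — is boundedness of the Palais–Smale sequence. In the autonomous case $I-\tfrac1\alpha P=(\tfrac12-\tfrac1\alpha)\|\nabla u\|_2^2+\tfrac{\alpha-1}{4\alpha}D(u)$ is coercive on $S(c)$; here the non–constant $A$ leaves the remainder $-\tfrac1{\alpha p}\int\nabla A(x)\cdot x\,|u|^p\,dx$, which is controlled only by $\|\nabla u\|_2^{\alpha}$ with $\alpha>2$, so $I-\tfrac1\alpha P$ is \emph{not} coercive and the sequence need not be bounded. Following \cite{sx2020nonautonomous} I would therefore pass to the family $I_\mu(u)=\tfrac12\|\nabla u\|_2^2+\tfrac14 D(u)-\tfrac\mu p\int A|u|^p$, $\mu\in[\delta,1]$, whose minimax levels $m_\mu(c)$ are nonincreasing in $\mu$; by the Jeanjean–Struwe monotonicity trick (the coercive part $\tfrac12\|\nabla u\|_2^2+\tfrac14 D(u)$ tends to $+\infty$ on $S(c)$), for a.e. $\mu$ there is a \emph{bounded} Palais–Smale sequence for $I_\mu|_{S(c)}$ at level $m_\mu(c)$ with $P_\mu\to0$, and passing to its weak limit (as below) gives a solution $u_\mu\in S(c)\cap\mathcal P_\mu(c)$. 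Letting $\mu\to1^-$ along such $\mu$, one uses the Pohozaev identity satisfied exactly by $u_\mu$, combined with $\nabla A\cdot x\le\tfrac32(p-2)(A-A_\infty)$ from $(A2)$ and $\nabla A\cdot x\ge-\tfrac32 A$ from $(A3)$, to get $H^1$–bounds uniform in $\mu$; I expect making these uniform bounds precise to be the technical heart of the proof.

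Finally, for compactness I would take the bounded sequence, extract $u_n\rightharpoonup\bar u_c$ in $H^1(\R^3)$, note the multipliers $\lambda_n=\tfrac1c\langle I'(u_n),u_n\rangle$ are bounded, so $\lambda_n\to\lambda_c\in\R$ and $\bar u_c$ solves $-\Delta\bar u_c+\lambda_c\bar u_c+(|x|^{-1}*|\bar u_c|^2)\bar u_c=A(x)|\bar u_c|^{p-2}\bar u_c$. I would rule out $\bar u_c=0$: if $u_n$ vanishes then $\|u_n\|_p\to0$, hence by $P(u_n)\to0$ also $\|\nabla u_n\|_2^2+\tfrac14 D(u_n)\to0$ and $I(u_n)\to0<m_0\le m(c)$, a contradiction; if the mass escapes to infinity then, since $A\to A_\infty$ and $A\ge A_\infty$, the energy converges to that of the autonomous problem with $A_\infty$, giving $m(c)\ge m_\infty(c)$, while testing with a ground state of that problem gives $m(c)\le m_\infty(c)$, and $(A1)$–$(A3)$ sharpen this to a strict inequality when $A\not\equiv A_\infty$, excluding escape (the case $A\equiv A_\infty$ being the known autonomous statement). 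Thus $\bar u_c\ne0$; a Brezis–Lieb argument on $\|\nabla u_n\|_2^2$, $D(u_n)$ and $\int A|u_n|^p$, together with the equation and $P(u_n)\to0$, upgrades weak to strong convergence and forces $\|\bar u_c\|_2^2=c$. Then $\bar u_c\in S(c)\cap\mathcal P(c)$, $I(\bar u_c)=m(c)=\inf_{\mathcal P(c)}I=\inf_{S(c)}\max_{t>0}I(u^t)$, and $m(c)\ge m_0>0$ completes the proof.
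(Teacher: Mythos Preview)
Your overall architecture (unique fiber maximum, $\mathcal P(c)$ as a natural constraint, $m(c)>0$, nonvanishing, comparison with the limit problem) is sound, but you misdiagnose where the difficulty lies, and this leads you down an unnecessary detour. You claim that $I-\tfrac1\alpha P$ is not coercive on $S(c)$ because of the remainder $-\tfrac1{\alpha p}\int_{\R^3}\nabla A(x)\cdot x\,|u|^p\,dx$. In fact, under $(A1)$--$(A2)$ one has $-\nabla A(x)\cdot x\ge 0$ for all $x$ (this is Lemma~\ref{1 lem3.1}, obtained by letting $t\to\infty$ in your own observation about $B(x)$), so that remainder is \emph{nonnegative} and
\[
I(u)-\tfrac1\alpha P(u)\ \ge\ \Bigl(\tfrac12-\tfrac1\alpha\Bigr)\|\nabla u\|_2^2,
\]
which is coercive on $S(c)$ since $\alpha>2$. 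Hence any sequence in $\mathcal P(c)$ (where $P\equiv0$) with bounded energy is automatically bounded in $H^1$, and the whole Jeanjean monotonicity--trick machinery with the family $I_\mu$ is not needed. The paper exploits exactly this: it takes a minimizing sequence directly on $\mathcal P(c)$, gets $H^1$--boundedness for free from the identity above, and never constructs a Palais--Smale sequence for $I|_{S(c)}$.

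Correspondingly, you misplace the role of $(A3)$. In the paper $(A3)$ is \emph{not} used to obtain bounds on solutions; it is used to prove that $c\mapsto m(c)$ is (strictly) decreasing (Lemma~\ref{2 drceasing}), and this monotonicity is what rules out the dichotomy scenario $0<\|\bar u\|_2^2<c$ in the compactness step (via the splitting $I=\Phi+\tfrac12 P$ with $\Phi>0$ and the argument taken from \cite{sx2020nonautonomous}). Your proposal only treats vanishing and escape to infinity explicitly; the sentence ``a Brezis--Lieb argument \dots\ forces $\|\bar u_c\|_2^2=c$'' hides precisely the step where monotonicity of $m(c)$ (hence $(A3)$) enters. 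In the ``escape to infinity'' case, the paper does not even need a strict inequality $m(c)<m_\infty(c)$: after translating to a nontrivial weak limit $\hat u$ with $P_\infty(\hat u)=0$ and $I_\infty(\hat u)=m(c)$, one rescales $\hat u$ back onto $\mathcal P(c)$ and uses $I\le I_\infty$ to conclude that $m(c)$ is attained. So: drop the monotonicity trick, work with a minimizing sequence on $\mathcal P(c)$, and use $(A3)$ to get strict monotonicity of $m(c)$ for the mass--splitting exclusion.
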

Besides $A\equiv$constant, there are indeed many functions which satisfy
$(A_1), (A_2)$ and $(A_3)$. For example
\begin{itemize}
\item[($i$)] $A_1(x) = 1 + ce^{-\tau|x|}$ with some $c>0$ and $\tau> 0$;
\item[($ii$)] $A_2(x) = 1 + \frac{c}{1+|x|}$ with some $c>0$.
\end{itemize}

This article is organized as follows. In sections 2-3, we prove Theorem \ref{1 th} and Theorem \ref{2 th} respectively.

We make use of the following notation:
\begin{itemize}
\item [(i)] $2^*$ denotes the Sobolev critical exponent, and $2^*=6$ for $N=3$;
	\item [(ii)] $C,C_i,i=1,2,\cdots$ will be used repeatedly to represent various normal numbers, the exact value of which is irrelevant;
	\item [(iii)]$o(1)$ denotes the infinitesimal as $n\to +\infty$.
\end{itemize}

\vskip4mm
{\section{$L^2$ -subcritical case $p\in(2,\frac{10}{3})$}}
\setcounter{equation}{0}
In the section, we prove Theorem \ref{1 th} under $p\in(2,\frac{10}{3})$ and conditions (A1)-(A2).
Since we consider equation $\eqref{1.1.0}$ in the whole space $\R^3$, the Sobolev embedding $H^1(\R^3)\hookrightarrow L^p(\R^3)$ isn't compact.
To solve the lack of compactness, we consider the limiting equation of $\eqref{1.1.0}$:
\begin{equation}\nonumber
	-\Delta u+\lambda u +\left(\vert x \vert ^{-1} * \vert u \vert ^{2} \right) u=A_{\infty}|u|^{p-2}u,\; u\in H^1(\R^3).
\end{equation}
The energy functional is defined as follows:
\begin{equation}\nonumber
	I_{\infty}(u)=\dfrac{1}{2}\|\nabla u\|^2_2+\frac{1}{4}\int_{\R^3}\int_{\R^3}\frac{|u(x)|^2|u(y)|^2}{|x-y|}dxdy-\frac{A_{\infty}}{p}\int_{\R^3}|u(x)|^pdx.
\end{equation}
Similar to $\eqref{Gamma def}$, we define the Pohozaev manifold $\mathcal{P}_{\infty}(c)$ as
\begin{equation}\nonumber
	\mathcal{P}_{\infty}(c)=\left\{u\in S(c): P_{\infty}(u):=\dfrac{\partial}{\partial t}I_{\infty}(u^t)\big|_{t=1}=0\right\}.
\end{equation}
Since $A(x)=A_{\infty}$ satisfies (A1), (A2), all the following conclusions on $I(u)$ are also true for $I_{\infty}$.

Now, we give the classical Brezis-Lieb lemma $\cite{brezislieb}$ and a nonlocal version of Brezis-Lieb lemma, which will be used in
both $L^2$-subcritical case and $L^2$-supercritical case.
\begin{lemma}\label{1 le 2.1} (\cite{brezislieb})
	Let $\{u_n\}\subset L^p(\R^N)$, $p\in[1,+\infty)$. If
\begin{itemize}
	\item [(i)] $\{u_n\}$ is bounded in $L^p(\R^N)$;
	\item [(ii)]$u_n(x)\to u(x)$ a. e. on $\R^N$, as $n\to +\infty$.
\end{itemize}
Then
\begin{equation}\nonumber
	\int_{\R^N}|u_n|^p dx-\int_{\R^N}|u_n-u|^p dx =\int_{\R^N}|u|^p dx+o(1).
\end{equation}
\end{lemma}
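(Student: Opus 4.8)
The plan is to establish the equivalent $L^1$-vanishing of the ``cross terms''. Writing $f_n := u_n - u$, so that $f_n \to 0$ almost everywhere and $\{f_n\}$ stays bounded in $L^p(\R^N)$ with $M := \sup_n \|f_n\|_p^p < \infty$, and noting $|u_n|^p = |f_n+u|^p$ and $|u_n-u|^p = |f_n|^p$, it suffices to prove
\[
\int_{\R^N}\Big|\,|f_n+u|^p-|f_n|^p-|u|^p\,\Big|\,dx \longrightarrow 0 \qquad (n\to\infty).
\]

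First I would record the elementary inequality: for every $\varepsilon>0$ there is $C_\varepsilon>0$ with
\[
\Big|\,|a+b|^p-|a|^p\,\Big| \le \varepsilon|a|^p + C_\varepsilon|b|^p \qquad \text{for all } a,b\in\R ,
\]
which comes from the bound $|\,|a+b|^p-|a|^p\,|\le p(|a|+|b|)^{p-1}|b|$ (valid since $p\ge 1$) followed by Young's inequality applied to $|a|^{p-1}|b|$. Taking $a=f_n$, $b=u$ gives
\[
\Big|\,|f_n+u|^p-|f_n|^p-|u|^p\,\Big| \le \varepsilon|f_n|^p + (C_\varepsilon+1)|u|^p .
\]
Then I would introduce the nonnegative truncated remainders
\[
W_{n,\varepsilon} := \Big(\,\Big|\,|f_n+u|^p-|f_n|^p-|u|^p\,\Big| - \varepsilon|f_n|^p\Big)^{+},
\]
so that $0\le W_{n,\varepsilon}\le (C_\varepsilon+1)|u|^p\in L^1(\R^N)$, while $W_{n,\varepsilon}(x)\to 0$ a.e.\ because $f_n\to 0$ a.e.\ forces $|f_n+u|^p-|f_n|^p-|u|^p\to 0$ a.e. By the dominated convergence theorem, $\int_{\R^N}W_{n,\varepsilon}\,dx\to 0$.

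To conclude, I would use $|\,|f_n+u|^p-|f_n|^p-|u|^p\,|\le W_{n,\varepsilon}+\varepsilon|f_n|^p$ together with $\int_{\R^N}|f_n|^p\,dx\le M$ to obtain $\limsup_{n\to\infty}\int_{\R^N}|\,|f_n+u|^p-|f_n|^p-|u|^p\,|\,dx\le\varepsilon M$, and then let $\varepsilon\to 0$. I do not expect a genuine obstacle here: this is the classical Brezis--Lieb lemma, and the only point needing a little care is choosing the $\varepsilon$-dependent splitting so that the ``bad'' piece $\varepsilon|f_n|^p$ is absorbed by the uniform $L^p$-bound $M$ while the remainder $W_{n,\varepsilon}$ is dominated and hence vanishes; the argument uses nothing special about $\R^N$ and goes through verbatim for complex-valued sequences, which is the form actually needed later.
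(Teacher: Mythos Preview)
Your argument is correct and is precisely the standard proof of the Brezis--Lieb lemma (the $\varepsilon$-splitting plus dominated convergence on the truncated remainder). Note that the paper does not actually prove this lemma: it is only stated with a citation to \cite{brezislieb}, so there is no ``paper's own proof'' to compare against --- your proof simply supplies what the paper takes for granted.
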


\begin{lemma}\label{1 le 2.2}
Let $q \in [2,2^*] $and $A\in L^{\infty}(\R^N)$. If $u_n \rightharpoonup u$ in $H^1(\R^N)$, then
\begin{equation}\label{2.2}
	\int_{\R^N}A(x)|u_n|^p dx-\int_{\R^N}A(x)|u_n-u|^p dx =\int_{\R^N}A(x)|u|^p dx+o(1),
\end{equation}
\begin{equation}\label{2.3}
	\int_{\R^N}| \nabla  u_n|^2 dx-	\int_{\R^N}  |\nabla  u_n-u|^2 dx=\int_{\R^N}|\nabla  u|^2 dx+o(1),
\end{equation}
\begin{equation}\label{2.4}
	\int_{\R^N}(|x|^{-1}*|u_n|^2)|u_n|^2 dx-\int_{\R^N}(|x|^{-1}*|u_n-u|^2)|u_n-u|^2 dx =\int_{\R^N}(|x|^{-1}*|u|^2)|u|^2 dx+o(1).
\end{equation}
\end{lemma}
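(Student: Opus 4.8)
The plan is to prove the three splitting identities \eqref{2.2}--\eqref{2.4} separately, since they rest on genuinely different ingredients: \eqref{2.3} is a Hilbert-space identity, \eqref{2.2} is the classical Brezis--Lieb lemma weighted by a bounded function, and \eqref{2.4} needs a (by now standard) nonlocal refinement. In every case I would begin with two common facts. First, a weakly convergent sequence $u_n\rightharpoonup u$ in $H^1(\R^N)$ is bounded there and hence, by the Sobolev embedding, bounded in $L^q(\R^N)$ for each $q\in[2,2^*]$. Second, by the Rellich--Kondrachov theorem on balls together with a diagonal extraction, along a subsequence $u_n\to u$ almost everywhere in $\R^N$; and since the right-hand sides of \eqref{2.2}--\eqref{2.4} are independent of the subsequence, a ``subsequence of a subsequence'' argument reduces everything to establishing the identities along such an a.e.-convergent subsequence, which I fix from now on.

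Identity \eqref{2.3} I would obtain by expanding $\int|\nabla u_n|^2=\int|\nabla(u_n-u)|^2+2\int\nabla(u_n-u)\cdot\nabla u+\int|\nabla u|^2$; the middle term is $o(1)$ because $u_n\rightharpoonup u$ in $H^1(\R^N)$ forces $\nabla u_n\rightharpoonup\nabla u$ in $L^2(\R^N)$. For \eqref{2.2}, since $A\in L^\infty$ we have $\big|\int_{\R^N}A(|u_n|^p-|u_n-u|^p-|u|^p)\big|\le\|A\|_\infty\int_{\R^N}\big||u_n|^p-|u_n-u|^p-|u|^p\big|$, so it suffices to establish the strong (integrated-defect) form of the Brezis--Lieb lemma \cite{brezislieb}, namely $\int_{\R^N}\big||u_n|^p-|u_n-u|^p-|u|^p\big|\to0$. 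I would derive this from the elementary inequality $\big||a+b|^p-|a|^p\big|\le\varepsilon|a|^p+C_\varepsilon|b|^p$, applied with $a=u_n-u$ and $b=u$: it bounds the defect by $\varepsilon|u_n-u|^p$ plus a term dominated by $(C_\varepsilon+1)|u|^p\in L^1(\R^N)$ that tends to $0$ a.e., hence to $0$ in $L^1$ by dominated convergence; letting $n\to\infty$ and then $\varepsilon\to0$, using the uniform $L^p$-bound on $\{u_n-u\}$, gives the claim.

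For \eqref{2.4} I would introduce the symmetric bilinear form $B(f,g):=\int_{\R^3}\int_{\R^3}\frac{f(x)g(y)}{|x-y|}\,dx\,dy$, which by \eqref{HLS1} satisfies $|B(f,g)|\le C\|f\|_{6/5}\|g\|_{6/5}$, and observe that $D(v):=\int_{\R^3}(|x|^{-1}*|v|^2)|v|^2\,dx=B(|v|^2,|v|^2)$ is therefore continuous on $L^{6/5}(\R^3)$, while the densities $|u_n|^2,\,|u_n-u|^2,\,|u|^2$ stay bounded in $L^{6/5}(\R^3)$ because $\{u_n\}$ is bounded in $L^{12/5}$. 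Writing $\eta_n:=|u_n|^2-|u_n-u|^2-|u|^2$, a Brezis--Lieb estimate carried out exactly as in the previous paragraph --- in $L^{12/5}$ and with the square in place of the $p$-th power --- gives $\eta_n\to0$ in $L^{6/5}(\R^3)$; moreover $|u_n-u|^2\rightharpoonup0$ in $L^{6/5}(\R^3)$, since it converges to $0$ in $L^{6/5}_{\mathrm{loc}}$ by Rellich ($12/5<2^*$) and a tail estimate using the uniform $L^{6/5}$-bound promotes this to weak convergence tested against any $\phi\in L^6$. Expanding $B$ bilinearly along $|u_n|^2=|u_n-u|^2+|u|^2+\eta_n$,
\begin{align*}
D(u_n)&=B\big(|u_n-u|^2+|u|^2+\eta_n,\;|u_n-u|^2+|u|^2+\eta_n\big)\\
&=D(u_n-u)+D(u)+2B\big(|u_n-u|^2,|u|^2\big)+R_n,
\end{align*}
where $R_n$ collects the three terms containing $\eta_n$ and is $o(1)$ because $\|\eta_n\|_{6/5}\to0$ while the remaining factors stay bounded in $L^{6/5}$. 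It then remains to kill the cross term: $B(|u_n-u|^2,|u|^2)=\int_{\R^3}(|x|^{-1}*|u|^2)|u_n-u|^2\,dx\to0$, because $|x|^{-1}*|u|^2\in L^6(\R^3)$ by the Hardy--Littlewood--Sobolev inequality and $|u_n-u|^2\rightharpoonup0$ in $L^{6/5}(\R^3)$. Assembling these pieces yields \eqref{2.4}.

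I expect the main obstacle to be \eqref{2.4}, and within it the treatment of the cross term and remainder: one must know that $\eta_n\to0$ \emph{strongly} in $L^{6/5}$ (a density-level Brezis--Lieb statement) while $|u_n-u|^2\rightharpoonup0$ only \emph{weakly} there, and then pair strong with weak convergence correctly through the Hardy--Littlewood--Sobolev inequality --- crucially using $|x|^{-1}*|u|^2\in L^6$ --- so as to annihilate $B(|u_n-u|^2,|u|^2)$. Everything else (the bilinear bookkeeping, the HLS continuity bound, the Brezis--Lieb trick) is routine once those two convergences are secured.
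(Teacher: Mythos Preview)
Your proof is correct and follows essentially the same strategy as the paper: for \eqref{2.2} both you and the authors reduce to the strong $L^1$ form of Brezis--Lieb and then absorb the bounded weight $A$, while for \eqref{2.4} the paper simply cites the nonlocal Brezis--Lieb lemma from \cite{MS,luowang,LRT}, whose standard proof is precisely the bilinear HLS expansion you carry out in full. The only minor difference is \eqref{2.3}: you use the cleaner Hilbert-space identity (expand and kill the cross term by weak convergence of $\nabla u_n$), whereas the paper phrases it as a consequence of Lemma~\ref{1 le 2.1}; your route is more direct but the content is the same.
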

\begin{proof}
From the proof of Lemma \ref{1 le 2.1} (see \cite[Lemma 1.32]{brezislieb}), we know that
$$|u_n|^p-|u|^p-|u_n-u|^p\to0~~\text{in}~ L^1(\R^N),$$ as $n\to \infty$.
Along with $A(x)\in L^{\infty}(\R^N)$ we have
$$\left|\int_{\R^N}A(x)(|u_n|^p-|u_n-u|^p-|u|^p) dx\right|\le \|A\|_{\infty}\int_{\R^3}\left||u_n|^p-|u_n-u|^p-|u|^p\right|dx=o(1),$$
then $\eqref{2.2}$ holds.

$\eqref{2.3}$ is  a corollary of Lemma \ref{1 le 2.1} and Sobolev embedding theorem.

$\eqref{2.4}$ is the Brezis-Lieb lemma with Riesz potential, see \cite[(3.6)]{MS} (also \cite[Lemma 2.2]{luowang} and \cite[Lemma 2.2]{LRT}).
\end{proof}

In order to estimate the lower bound of $I(u)$, we need the following inequality \cite{nonexistence}.
\begin{lemma}\cite[Lemma 2.2]{nonexistence}\label{1 lec(u)}
	There exists a constant $C>0$ such that for all $u\in S(c)$
	\begin{equation}\label{1 equ2.1 c(u)}
		\int_{\R^3}\int_{\R^3}\frac{|u(x)|^2|u(y)|^2}{|x-y|}dxdy \ge -\frac{1}{16\pi}\|\nabla  u\|^2_2 + C\|u\|^3_3.
	\end{equation}
\end{lemma}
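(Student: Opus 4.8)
The plan is to deduce the inequality from a Coulomb--Sobolev type estimate together with an elementary completion of a square; the mass constraint $u\in S(c)$ turns out to play no role. For $u\in H^1(\R^3)$ write $D(u):=\int_{\R^3}\int_{\R^3}\frac{|u(x)|^2|u(y)|^2}{|x-y|}\,dx\,dy$ and let $\phi_u:=|x|^{-1}*|u|^2$ be the associated Coulomb potential. Since the Newtonian kernel of $-\Delta$ on $\R^3$ is $(4\pi|x|)^{-1}$, one has $-\Delta\phi_u=4\pi|u|^2$ in the distributional sense; moreover, by the Hardy--Littlewood--Sobolev inequality and the Sobolev inequality, $\phi_u\in\dot H^1(\R^3)$, and
\begin{equation}\nonumber
\|\nabla\phi_u\|_2^2=\int_{\R^3}\phi_u(-\Delta\phi_u)\,dx=4\pi\int_{\R^3}\big(|x|^{-1}*|u|^2\big)|u|^2\,dx=4\pi\,D(u).
\end{equation}

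First I would establish the pointwise (in $u$) bound $\|u\|_3^3\le\tfrac{1}{2\sqrt\pi}\,\|\nabla u\|_2\,D(u)^{1/2}$: using $-\Delta\phi_u=4\pi|u|^2$ and $|u|^2\cdot|u|=|u|^3$ one rewrites $\|u\|_3^3=\tfrac{1}{4\pi}\int_{\R^3}(-\Delta\phi_u)\,|u|\,dx$, integrates by parts (legitimate because $\phi_u\in\dot H^1(\R^3)$ and $|u|\in H^1(\R^3)$; e.g. $-\Delta\phi_u=4\pi|u|^2\in L^{6/5}(\R^3)$ pairs with $|u|\in L^6(\R^3)$), and then applies Cauchy--Schwarz together with the diamagnetic inequality $\big|\nabla|u|\big|\le|\nabla u|$ a.e.:
\begin{equation}\nonumber
\|u\|_3^3=\frac{1}{4\pi}\int_{\R^3}\nabla\phi_u\cdot\nabla|u|\,dx\le\frac{1}{4\pi}\,\|\nabla\phi_u\|_2\,\big\|\nabla|u|\big\|_2\le\frac{1}{4\pi}\,\big(4\pi D(u)\big)^{1/2}\,\|\nabla u\|_2=\frac{1}{2\sqrt\pi}\,\|\nabla u\|_2\,D(u)^{1/2}.
\end{equation}
Then I would complete the square: for all $a,b\ge 0$ one has $\tfrac{1}{2\sqrt\pi}\,ab=b^2+\tfrac{1}{16\pi}a^2-\big(b-\tfrac{1}{4\sqrt\pi}a\big)^2\le b^2+\tfrac{1}{16\pi}a^2$. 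Taking $a=\|\nabla u\|_2$, $b=D(u)^{1/2}$ and combining with the previous bound gives $\|u\|_3^3\le D(u)+\tfrac{1}{16\pi}\|\nabla u\|_2^2$, that is, $D(u)\ge-\tfrac{1}{16\pi}\|\nabla u\|_2^2+\|u\|_3^3$ for every $u\in H^1(\R^3)$ --- which is the claim, in fact with the explicit constant $C=1$ and with no use of $\int_{\R^3}|u|^2=c$.

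I do not expect a genuine obstacle here: the argument is three lines once one notices that the right way to control $\|u\|_3^3$ by the nonlocal energy is to route it through the potential $\phi_u$, which is exactly what couples the Dirichlet energy $\|\nabla u\|_2^2$ to the Coulomb energy $D(u)$. The one point deserving a word of care is the justification of the integrations by parts for functions merely in $H^1(\R^3)$ on the whole space, handled by the integrability remarks above (alternatively one may take the critical Coulomb--Sobolev inequality on $\R^3$ as a black box and carry out only the square-completion step). A cruder route also works --- cover $\R^3$ by unit balls, note $|x-y|^{-1}\ge\tfrac12$ on a unit ball so that $\int_{B_1(y)}|u|^2\le(2D(u))^{1/2}$, and feed this into a localized Gagliardo--Nirenberg inequality --- but it yields a non-sharp constant at the cost of extra work, so I would not pursue it.
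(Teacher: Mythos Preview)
Your argument is correct and is in fact the standard proof of this inequality: rewrite $\|u\|_3^3$ through the Coulomb potential $\phi_u=|x|^{-1}*|u|^2$ (so $-\Delta\phi_u=4\pi|u|^2$ and $\|\nabla\phi_u\|_2^2=4\pi D(u)$), integrate by parts and use Cauchy--Schwarz to get $\|u\|_3^3\le \tfrac{1}{2\sqrt\pi}\,\|\nabla u\|_2\,D(u)^{1/2}$, then apply Young/square-completion. The paper does not supply its own proof of this lemma; it merely quotes it from \cite{nonexistence} (Jeanjean--Luo, Lemma~2.2), whose proof in turn follows exactly the route you outline (originating in Ruiz's work), so there is nothing to compare beyond noting that you have reproduced the cited argument with the explicit constant $C=1$, and that the constraint $u\in S(c)$ is indeed irrelevant.
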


\begin{lemma}\label{1 lem3.1} Under the assumption (A1)-(A2), we have
\begin{equation}\label{A2 equ1.1}
		\begin{aligned}
			\phi (t,x)=-t^{-\frac{3}{2}(p-2)}[A(x)-A(tx)]+
			\frac{2\left(t^{-\frac{3}{2}(p-2)}-1\right)}{3(p-2)}\nabla A(x)\cdot x \geq 0, \; \forall t>0,x\in \R^3,
		\end{aligned}
	\end{equation}
	\begin{equation}\label{A2 equ1.2}
		t\mapsto A(tx)\ is \ nonincreasing \ on~(0,\infty) \; \forall x\in\R^3,
	\end{equation}
	\begin{equation}\label{A2 equ1.3}
		\begin{aligned}
			-\nabla A(x)\cdot x\geq 0,\; \forall x\in \R^3, \quad
			\text{and}~ -\nabla A(x)\cdot x \to 0,\; as |x|\to \infty.
		\end{aligned}
	\end{equation}
\end{lemma}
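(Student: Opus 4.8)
The plan is to extract all three conclusions from the monotonicity hypothesis (A2), namely that $g(t,x) := \frac{3}{2}(p-2)A(tx) - \nabla A(tx)\cdot(tx)$ is nonincreasing in $t$ on $(0,\infty)$, together with the asymptotics and sign conditions in (A1). First I would record the two easy endpoint facts. Taking $t\to\infty$ in (A2): since $A(tx)\to A_\infty$ and one expects $\nabla A(tx)\cdot(tx)\to 0$ (which follows from (A1) via a standard argument, e.g.\ integrating $\frac{d}{ds}A(sx) = \nabla A(sx)\cdot x$ and using that $A$ has a finite limit, so $\nabla A(sx)\cdot(sx)$ cannot stay bounded away from $0$), we get $\lim_{t\to\infty} g(t,x) = \frac32(p-2)A_\infty$. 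Comparing with $t=1$, monotonicity gives $g(1,x)\ge \frac32(p-2)A_\infty$, i.e.\ $\frac32(p-2)A(x) - \nabla A(x)\cdot x \ge \frac32(p-2)A_\infty$; combined with $A(x)\ge A_\infty$ from (A1) and $p>2$, this is consistent, and more to the point, evaluating at a generic point and letting $|x|\to\infty$ will pin down $-\nabla A(x)\cdot x \to 0$, giving the second half of \eqref{A2 equ1.3}.

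For the sign $-\nabla A(x)\cdot x \ge 0$ in \eqref{A2 equ1.3}: I would compute $\partial_t\big[A(tx)\big] = \nabla A(tx)\cdot x = \frac{1}{t}\nabla A(tx)\cdot(tx)$, so monotonicity of $A(tx)$ in $t$ (claim \eqref{A2 equ1.2}) is equivalent to $\nabla A(tx)\cdot(tx)\le 0$ for all $t>0$, which at $t=1$ is exactly $-\nabla A(x)\cdot x\ge 0$. Thus \eqref{A2 equ1.2} and the first half of \eqref{A2 equ1.3} are the same statement, and both follow once I show $\nabla A(tx)\cdot(tx)\le 0$. To get this from (A2): fix $x$ and set $h(t) = g(t,x)$; nonincreasing means $h'(t)\le 0$ where differentiable. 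Writing $y = tx$ and differentiating, $h'(t) = \frac32(p-2)\,\frac{1}{t}\nabla A(tx)\cdot(tx) - \frac{1}{t}\frac{d}{ds}\big[\nabla A(sx)\cdot(sx)\big]\big|_{s=t}$. This by itself mixes $\nabla A\cdot x$ with second derivatives, so a cleaner route is to use the integrated form: from \eqref{A2 equ1.1}, which I should prove first and then derive \eqref{A2 equ1.2}–\eqref{A2 equ1.3} as corollaries.

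For \eqref{A2 equ1.1}: fix $x$ and define $\Phi(t) = t^{-\frac32(p-2)}\phi(t,x)$ after clearing denominators — actually the natural move is to integrate the monotonicity of $g$. Since $g(s,x)$ is nonincreasing, for $s\in(t,1)$ (when $t<1$) we have $g(s,x)\ge g(1,x)$, and for $s\in(1,t)$ (when $t>1$) we have $g(s,x)\le g(1,x)$; in both cases $\int$ of $(g(s,x)-g(1,x))$ against a suitable positive weight has a definite sign. Concretely, I would verify that $\phi(t,x)$, as written, equals $\frac{2}{3(p-2)}\,t^{-\frac32(p-2)}\int_1^t s^{\frac32(p-2)-1}\big(g(1,x)\cdot\text{const} - g(s,x)\big)\,ds$ up to getting the constants right — i.e.\ differentiate the claimed $\phi(t,x)$ in $t$ and check that $\partial_t\phi$ has the sign forced by $g$ nonincreasing, together with $\phi(1,x)=0$. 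Indeed $\phi(1,x)=0$ is immediate from the formula, so it suffices to show $\phi$ decreases then increases appropriately, i.e.\ $\phi(t,x)\ge 0$ on both sides of $t=1$; computing $\partial_t\phi(t,x)$ should produce exactly $-\frac{(p-2)\cdot\frac32}{t}t^{-\frac32(p-2)}\big[g(t,x)-g(1,x)\big]$ or a positive multiple thereof, whose sign matches "$g$ nonincreasing $\Rightarrow \phi\ge 0$". Once \eqref{A2 equ1.1} holds, \eqref{A2 equ1.2} follows by sending $t\to\infty$ (the bracket $A(x)-A(tx)\to A(x)-A_\infty$ and the prefactor $t^{-\frac32(p-2)}\to 0$, while $\frac{2(t^{-\frac32(p-2)}-1)}{3(p-2)}\to -\frac{2}{3(p-2)}$, yielding $\frac{2}{3(p-2)}\nabla A(x)\cdot x \le 0$ in the limit, hence \eqref{A2 equ1.3}'s sign), and replacing $x$ by $tx$ in that pointwise inequality and combining with the chain rule gives monotonicity of $t\mapsto A(tx)$. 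The asymptotic $-\nabla A(x)\cdot x\to 0$ as $|x|\to\infty$ then comes from (A1): since $t\mapsto A(tx)$ is nonincreasing and bounded below by $A_\infty$ with $A(tx)\to A_\infty$, its derivative $\frac1t\nabla A(tx)\cdot(tx)$ must tend to $0$ along a sequence, and a uniform-in-direction argument using $A\in C^1$ and the limit $A_\infty$ upgrades this to the full limit.

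The main obstacle I anticipate is bookkeeping the constants and the direction of the inequality when passing between the differential form (A2) and the integrated form \eqref{A2 equ1.1}: one has to be careful that the weight $s^{\frac32(p-2)-1}$ is positive, that the sign of $\int_1^t$ flips with whether $t\lessgtr 1$, and that these two flips cancel so that $\phi(t,x)\ge 0$ holds on all of $(0,\infty)$. A secondary technical point is justifying $\nabla A(sx)\cdot(sx)\to 0$ as $s\to\infty$ rigorously from "$A$ is $C^1$ with a finite limit at infinity" — this is true but needs the observation that if $A(sx)\to A_\infty$ then $\int^\infty \frac{1}{s}\nabla A(sx)\cdot(sx)\,ds$ converges, which alone does not force the integrand to $0$; here the monotonicity from \eqref{A2 equ1.2} (once established) rescues it, so the logical order matters: first \eqref{A2 equ1.1}, then \eqref{A2 equ1.2}, then use monotonicity plus convergence to conclude the decay in \eqref{A2 equ1.3}.
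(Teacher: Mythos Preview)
Your plan for \eqref{A2 equ1.1} is exactly the paper's: differentiate $\phi(t,x)$ in $t$, observe that $\partial_t\phi(t,x)=t^{-\frac{3}{2}(p-2)-1}\big[g(1,x)-g(t,x)\big]$ (your sign convention is equivalent), use (A2) to see this is $\le 0$ on $(0,1)$ and $\ge 0$ on $(1,\infty)$, and conclude from $\phi(1,x)=0$. Likewise, your derivation of the sign $-\nabla A(x)\cdot x\ge 0$ (and hence \eqref{A2 equ1.2}) by sending $t\to\infty$ in \eqref{A2 equ1.1} is what the paper does.

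The genuine gap is in the asymptotic $-\nabla A(x)\cdot x\to 0$ as $|x|\to\infty$. Your argument fixes a direction $x$ and uses that $r\mapsto A(rx)$ is nonincreasing with limit $A_\infty$; from this you can only conclude $r\,\frac{d}{dr}A(rx)=\nabla A(rx)\cdot(rx)\to 0$ \emph{along some sequence} $r_n\to\infty$ (otherwise $|f'(r)|\ge \varepsilon/r$ forces $\int|f'|=\infty$). Upgrading to the full limit, let alone uniformly over directions, does not follow from ``$A\in C^1$ with limit $A_\infty$'' alone, and the ``uniform-in-direction'' step you allude to is not a standard fact---one can build $C^1$ radial functions $f(r)\searrow A_\infty$ with $rf'(r)$ not converging to $0$. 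You would need to bring (A2) back in and argue carefully that $g(t,x)\to\frac{3}{2}(p-2)A_\infty$, and then still address uniformity in the direction of $x$.

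The paper sidesteps all of this with a one-line trick: plug a \emph{fixed} value $t=\tfrac12$ into the already-proved inequality \eqref{A2 equ1.1}. This yields, after rearranging,
\[
0\;\ge\;C_1\,\nabla A(x)\cdot x\;\ge\;C_2\big[A(x)-A(x/2)\big]
\]
with explicit positive constants $C_1,C_2$ depending only on $p$. Since $A(x)\to A_\infty$ and $A(x/2)\to A_\infty$ as $|x|\to\infty$ by (A1), the right-hand side tends to $0$, and the squeeze gives $\nabla A(x)\cdot x\to 0$ directly---no sequences, no direction-uniformity issues. I recommend replacing your asymptotic argument with this substitution.
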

\begin{proof}
	First, we take the derivative of $\phi(t,x)$, and with respect to (A2), for any $x\in \R^3$ we have
	\begin{equation}\nonumber
		\begin{aligned}
			\dfrac{\partial }{\partial t}\phi(t,x)
			&=t^{-\frac{3}{2}(p-2)-1}\left[\dfrac{3}{2}(p-2)A(x)-\dfrac{3}{2}(p-2)A(tx)+\nabla A(tx)\cdot tx-\nabla A(x)\cdot x\right]\\
		&\left\{ \begin{aligned}
			&\geq 0,\; t\geq 1;\\
			&\le 0,\; 0\le t\le 1.
		\end{aligned}  \right.
	\end{aligned}
\end{equation}
	This implies that $\phi(t,x)\geq \phi(1,x)=0$ for all $t>0$ and $x\in \R^3$, i.e. \eqref{A2 equ1.1} holds.\par
	Next, let $t\to +\infty$ in \eqref{A2 equ1.1}, we obtain $-\nabla A(x)\cdot x \geq 0$ for all $x\in \R^3$, which implies \eqref{A2 equ1.2}.

Finally, we put $t=\frac{1}{2}$ into \eqref{A2 equ1.1}, by (A1) then we have
	\begin{equation}\nonumber
		\begin{aligned}
			0\geq\frac{4\left(2^{\frac{3}{4}(p-2)}-1\right)}{3(p-2)}\nabla A(x)\cdot x \geq  2^{\frac{3}{4}(p-2)}\left[A(x)-A\left(\frac{x}{2}\right)\right].
		\end{aligned}
	\end{equation}
Let $|x| \to \infty$, we get \eqref{A2 equ1.3}.
\end{proof}

For convenience, we set
$$B(u):=\int_{\R^3}|\nabla u |^2dx,\quad C(u):=\int_{\R^3}\left(\vert x \vert ^{-1} * \vert u \vert ^{2} \right)|u|^{2}dx.$$
Then
\begin{equation*}
		I(u)=\dfrac{1}{2}B(u)+\dfrac{1}{4}C(u)-\frac{1}{p}\int_{\R^3}A(x)|u|^pdx.
	\end{equation*}
\begin{lemma}\label{1 well defined}
$\gamma (c)=\inf_{u \in S(c)}I(u)$ is well defined of all $c>0$ and $\gamma (c)\le 0$.
\end{lemma}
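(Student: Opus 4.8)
The plan is to verify the two assertions separately. For $\gamma(c)\le 0$ I would test the functional along the $L^2$-preserving family $u^t(x)=t^{3/2}u(tx)$, $t>0$, where $u\in S(c)$ is arbitrary (and $u^t\in S(c)$). A change of variables gives $B(u^t)=t^2B(u)$, $C(u^t)=t\,C(u)$ and $\int_{\R^3}A(x)|u^t|^p\,dx=t^{\frac32(p-2)}\int_{\R^3}A(x/t)|u|^p\,dx$, so that
\[
I(u^t)=\frac{t^2}{2}B(u)+\frac{t}{4}C(u)-\frac{t^{\frac32(p-2)}}{p}\int_{\R^3}A(x/t)|u|^p\,dx .
\]
Since $A\in L^\infty(\R^3)$ the last integral is bounded by $\|A\|_\infty\|u\|_p^p$ uniformly in $t$, and $\frac32(p-2)>0$ because $p>2$; hence $I(u^t)\to 0$ as $t\to 0^+$, and as $u^t\in S(c)$ for all $t$ this forces $\gamma(c)\le\lim_{t\to0^+}I(u^t)=0$.

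For $\gamma(c)>-\infty$ I would invoke the Gagliardo--Nirenberg inequality in $\R^3$: for $u\in S(c)$ and $p\in(2,\tfrac{10}{3})$,
\[
\|u\|_p^p\le C_p\|\nabla u\|_2^{\frac32(p-2)}\|u\|_2^{\frac{6-p}{2}}=C_p\,c^{\frac{6-p}{4}}\|\nabla u\|_2^{\frac32(p-2)} .
\]
Combining this with $C(u)\ge 0$ and $0< A(x)\le\|A\|_\infty<\infty$ yields
\[
I(u)\ge\frac12\|\nabla u\|_2^2-K\|\nabla u\|_2^{\theta},\qquad K:=\frac{\|A\|_\infty C_p\,c^{(6-p)/4}}{p},\quad \theta:=\tfrac32(p-2) .
\]
The crucial point is that $p<\tfrac{10}{3}$ forces $0<\theta<2$, so the real function $s\mapsto\frac12 s^2-Ks^{\theta}$ is bounded below on $[0,\infty)$ by a finite constant depending only on $c$, $p$ and $\|A\|_\infty$; taking the infimum over $S(c)$ gives $\gamma(c)>-\infty$.

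This lemma is essentially routine, and the two hypotheses on $p$ enter exactly in the two places noted: $p>2$ guarantees that the nonlinear term vanishes faster than the quadratic and Coulomb terms as $t\to0^+$, and $p<\tfrac{10}{3}$ guarantees that the gradient term dominates the nonlinear term as $\|\nabla u\|_2\to\infty$. If one later needs a lower bound that keeps track of the Coulomb contribution, the estimate $C(u)\ge 0$ can be replaced by Lemma~\ref{1 lec(u)}, which changes nothing here since $\frac12-\frac1{64\pi}>0$.
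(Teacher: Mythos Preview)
Your proof is correct and follows essentially the same route as the paper: the $L^2$-preserving scaling $u^t=t^{3/2}u(t\,\cdot)$ to force $I(u^t)\to 0$ and hence $\gamma(c)\le 0$, and the Gagliardo--Nirenberg inequality to control the nonlinear term and obtain $\gamma(c)>-\infty$. The only difference is that for the lower bound the paper invokes Lemma~\ref{1 lec(u)} to get inequality~\eqref{1 equ3.1 bound} with the extra $\|u\|_3^3$ term, whereas you use the simpler (and here sufficient) bound $C(u)\ge 0$; you already note this option yourself, and for the present lemma it makes no difference.
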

\begin{proof}
By (A1) and Gagliardo-Nirenberg inequality, we have
$$\int_{\R^3}A(x)|u|^pdx\leq C \|\nabla  u\|^{\frac{3}{2}(p-2)}_2 \|u\|_2^{3-\frac{p}{2}}.$$
Then by  \eqref{1 equ2.1 c(u)}, we derive at
	\begin{equation}\label{1 equ3.1 bound}
		I(u) \geq \left(\frac{1}{2}-\frac{1}{64\pi}\right)\|\nabla  u\|^2_2+ C\|u\|^3_3- C'\|\nabla  u\|^{\frac{3}{2}(p-2)}_2.
	\end{equation}
	Thus $\gamma (c)=\inf_{u \in S(c)}I(u)$ is well defined of all $c>0$ and $\gamma(c)>-\infty$.\par
	For every $u\in S(c)$, let $u^t= t^\frac{3}{2} u(tx)$, then $\|u^t\|_2=\|u\|_2=c$,
\begin{equation}\nonumber
		\begin{aligned}
&B(u^t)=\int_{\R^3}|\nabla u^t|^2dx=t^{2} B(u),\\
&C(u^t)=\int_{\R^3}\int_{\R^3}\frac{|u^t(x)|^2|u^t(y)|^2}{|x-y|}dxdy=tC(u).
		\end{aligned}
	\end{equation}
Thus, by (A1) we see that
\begin{equation}\nonumber
		\begin{aligned}
			I(u^t)&=\frac{1}{2}B(u^t) +\frac{1}{4}C(u^t)-\frac{1}{p}t^{\frac{3}{2}(p-2)}\int_{\R^3}A(t^{-1}x)|u|^pdx\\
			&\le \frac{1}{2}B(u^t) +\frac{1}{4}C(u^t)-\frac{1}{p}A_{\infty}t^{\frac{3}{2}(p-2)}\int_{\R^3}|u|^pdx \\
			&< \frac{1}{2}B(u^t) +\frac{1}{4}C(u^t)  \to 0,\quad\text{as}~t\to0.
		\end{aligned}
	\end{equation}
This implies $\gamma (c)\leq 0$.
\end{proof}

Now, we prove the subadditivity of $\gamma(c)$. Due to the nonlocality of the term $\int_{\R^3}(I_2\ast u^2)u^2dx$, the subadditivity of $\gamma(c)$ cannot be obtained directly.


\begin{lemma}\label{1 lesubadditivity}
	For each $c',c>0$, we have
\begin{equation}\label{1 equ4.1 subadditivity}
		\gamma (c+c')\le \gamma(c)+\gamma(c').
	\end{equation}
\end{lemma}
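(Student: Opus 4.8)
The plan is to build a competitor for $\gamma(c+c')$ out of near-minimizers of $\gamma(c)$ and $\gamma(c')$ by translating one of the two pieces off to spatial infinity, so that the Coulomb cross-interaction between them disappears. Fix $\varepsilon>0$ and choose $u\in S(c)$, $v\in S(c')$ with $I(u)<\gamma(c)+\varepsilon$ and $I(v)<\gamma(c')+\varepsilon$. Since $C_c^\infty(\R^3)$ is dense in $H^1(\R^3)$ and $I$ is continuous on $H^1(\R^3)$ (using \eqref{HLS1} and the Sobolev embedding), I may assume $u,v\in C_c^\infty(\R^3)$, say with $\mathrm{supp}\,u\cup\mathrm{supp}\,v\subset B_\rho(0)$. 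Fix a unit vector $e\in\R^3$ and put $w_n:=u+v(\cdot-ne)$ for $n\in\N$.

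For $n>2\rho$ the supports of $u$ and $v(\cdot-ne)$ are disjoint, so $\|w_n\|_2^2=\|u\|_2^2+\|v\|_2^2=c+c'$, i.e.\ $w_n\in S(c+c')$ and $\gamma(c+c')\le I(w_n)$. Disjointness of the supports kills the kinetic and the $L^p$ cross terms: $B(w_n)=B(u)+B(v)$ and $\int_{\R^3}A(x)|w_n|^p\,dx=\int_{\R^3}A(x)|u|^p\,dx+\int_{\R^3}A(x+ne)|v(x)|^p\,dx$. The nonlocal term is the one that cannot be split for a single function, and this is precisely where the translation is needed: $C(w_n)=C(u)+C(v)+2\int_{\R^3}\int_{\R^3}\frac{|u(x)|^2|v(y-ne)|^2}{|x-y|}\,dx\,dy$, and the cross integral is at most $2\|u\|_2^2\|v\|_2^2/(n-2\rho)$, hence $o(1)$. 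Therefore $I(w_n)=I(u)+\tfrac12 B(v)+\tfrac14 C(v)-\tfrac1p\int_{\R^3}A(x+ne)|v(x)|^p\,dx+o(1)$.

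What is left — the translated potential — is the crux. By (A1), $A(x+ne)\to A_\infty$ uniformly on $B_\rho(0)$, so $\int_{\R^3}A(x+ne)|v(x)|^p\,dx\to A_\infty\int_{\R^3}|v|^p\,dx$ and hence $I(w_n)\to I(u)+I_\infty(v)$. This already gives $\gamma(c+c')\le\gamma(c)+\gamma_\infty(c')$, where $\gamma_\infty(c):=\inf_{u\in S(c)}I_\infty(u)$; but since $A\ge A_\infty$ one only has $I_\infty(v)\ge I(v)$, so to reach the stated inequality one must replace the escaping-bump construction for the $c'$-piece by something that does not degrade $A$ to $A_\infty$, and this is exactly where (A2), in the form of Lemma~\ref{1 lem3.1} (radial monotonicity of $A$ and $\nabla A(x)\cdot x\le0$), has to be used. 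One natural way to try to close the gap is to argue instead through the mass-preserving dilations $u^t$: since $B(u^t)=t^2 B(u)$ and $C(u^t)=tC(u)$ scale with different powers, one can choose $t$ (compensating the change of mass by an amplitude factor) so that the kinetic and Coulomb parts are dominated while $A(tx)\le A(x)$ for $t\ge1$ from \eqref{A2 equ1.2} controls the non-autonomous part; this would reduce \eqref{1 equ4.1 subadditivity} to the scaling bound $\gamma(\theta c)\le\theta\gamma(c)$ for all $\theta\ge1$, which forces $c\mapsto\gamma(c)/c$ to be nonincreasing and then yields, for $c'\le c$, $\gamma(c+c')\le\tfrac{c+c'}{c}\gamma(c)=\gamma(c)+\tfrac{c'}{c}\gamma(c)\le\gamma(c)+\gamma(c')$, and symmetrically when $c<c'$. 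The step I expect to be the real obstacle is this last one: simultaneously controlling the kinetic term, the nonlocal Coulomb term, and the non-autonomous $L^p$ term, since the three scale with three different exponents.
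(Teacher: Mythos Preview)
Your translation argument is precisely the paper's proof: take compactly supported near-minimizers $u\in S(c)$, $v\in S(c')$, separate the supports by distance $\ge1/\varepsilon$, bound the Coulomb cross term by $2cc'\varepsilon$, and conclude $\gamma(c+c')\le I(u+v)\le I(u)+I(v)+2cc'\varepsilon\le\gamma(c)+\gamma(c')+(2+2cc')\varepsilon$. You then correctly flag a subtlety the paper glosses over: after translating $v$ to $v(\cdot-ne)$, the potential term becomes $\int A(x+ne)|v|^p\to A_\infty\int|v|^p$, so the translated piece is only a near-minimizer of $I_\infty$, not of $I$, and the bare translation argument yields at best $\gamma(c+c')\le\gamma(c)+\gamma_\infty(c')$. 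The paper simply writes $I(u)+I(v)\le\gamma(c)+\gamma(c')+2\varepsilon$ after translation without justifying why the translated $v$ remains a near-minimizer of the non-autonomous functional; so your objection points at a genuine sloppiness in the paper's own argument.

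Your proposal, however, stops short of a proof: you sketch a scaling route to $\gamma(\theta c)\le\theta\gamma(c)$ but explicitly leave it open. For $p\in(3,\tfrac{10}{3})$ the dilation $u_t(x)=t^2u(tx)$ does work: $\|u_t\|_2^2=tc$, $B(u_t)=t^3B(u)$, $C(u_t)=t^3C(u)$, and $A(x/t)\ge A(x)$ for $t>1$ by \eqref{A2 equ1.2}, so $I(u_t)\le t^3I(u)$ since $2p-3>3$; this yields $\gamma(tc)\le t^3\gamma(c)\le t\gamma(c)$ (using $\gamma\le0$) and hence subadditivity---which is exactly what the paper proves later as Lemma~\ref{1.1 subadditivity}. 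For $p\in(2,3)$ the exponent $2p-3<3$ reverses the comparison and this particular scaling fails, so in that range neither your sketch nor the paper's translation argument furnishes a complete proof of \eqref{1 equ4.1 subadditivity}.
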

\begin{proof}
We fix $\varepsilon> 0$. By the definition of $\gamma(c)$ and $\gamma(c')$, there exist $u\in S(c)\cap C^\infty_0(\R^3)$
and $v\in S(c')\cap C^\infty_0(\R^3)$ such that
$$I(u)\leq\gamma(c) + \varepsilon,\quad
I(v)\leq \gamma(c')+ \varepsilon.$$
Since $u$ and $v$ have compact support, by using parallel translation, we can assume
$$\text{supp} u \cap \text{supp} v = \emptyset,\quad dist\{\text{supp} u,\text{supp} v \}\geq\frac{1}{\varepsilon}.$$
Thus $u + v \in S(c+c'),$
\begin{equation*}
\int_{\R^3}A(x)|u(x)+v(x)|^pdx=\int_{\R^3}A(x)|u(x)|^pdx+\int_{\R^3}A(x)|v(x)|^pdx,
\end{equation*}
and
\begin{equation*}
\aligned
&\int_{\R^3}\int_{\R^3}\frac{|u(x)+v(x)|^2|u(y)+v(y)|^2}{|x-y|}dxdy\\
=&\int_{\R^3}\int_{\R^3}\frac{|u(x)|^2|u(y)|^2}{|x-y|}dxdy+\int_{\R^3}\int_{\R^3}\frac{|v(x)|^2|v(y)|^2}{|x-y|}dxdy
+2\int_{supp{v}}\int_{supp{u}}\frac{|u(x)|^2|v(y)|^2}{|x-y|}dxdy\\
\leq &\int_{\R^3}\int_{\R^3}\frac{|u(x)|^2|u(y)|^2}{|x-y|}dxdy+\int_{\R^3}\int_{\R^3}\frac{|v(x)|^2|v(y)|^2}{|x-y|}dxdy+2\varepsilon\int_{\R^3}|u(x)|^2dx\int_{\R^3}|v(y)|^2dy.
\endaligned
\end{equation*}
Therefore
$$ \gamma (c+c')\leq I(u + v) \leq I(u) + I(v)+2cc'\varepsilon\leq \gamma(c)+\gamma(c')+ (2+2cc')\varepsilon.$$
As $\varepsilon\to0$, we get \eqref{1 equ4.1 subadditivity}.
\end{proof}

\begin{lemma} \label{1 gammainfty} Let $\gamma_{\infty}(c)=\inf_{u \in S(c)}I_{\infty}(u)$, then
	$\gamma(c) \le \gamma_{\infty}(c)$ for all $c>0$.
\end{lemma}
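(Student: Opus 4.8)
The plan is to deduce the inequality directly from the pointwise comparison $I(u)\le I_\infty(u)$ on $S(c)$, which in turn comes from hypothesis (A1). Concretely, fix any $u\in S(c)$. Since $(A1)$ gives $A(x)\ge A_\infty>0$ for every $x\in\R^3$, we have $\int_{\R^3}A(x)|u|^p\,dx\ge A_\infty\int_{\R^3}|u|^p\,dx$, hence $-\frac1p\int_{\R^3}A(x)|u|^p\,dx\le -\frac{A_\infty}{p}\int_{\R^3}|u|^p\,dx$. Adding the two terms $\frac12 B(u)+\frac14 C(u)$, which are common to both functionals, yields
\begin{equation}\nonumber
I(u)=\frac12 B(u)+\frac14 C(u)-\frac1p\int_{\R^3}A(x)|u|^p\,dx\le \frac12 B(u)+\frac14 C(u)-\frac{A_\infty}{p}\int_{\R^3}|u|^p\,dx=I_\infty(u).
\end{equation}

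Taking the infimum over all $u\in S(c)$ on both sides then gives $\gamma(c)=\inf_{u\in S(c)}I(u)\le\inf_{u\in S(c)}I_\infty(u)=\gamma_\infty(c)$, as claimed. There is no real obstacle here: the only ingredient is the sign condition in $(A1)$, and the well-definedness of both infima (finiteness from below) has already been established in Lemma \ref{1 well defined} for $I$ and applies verbatim to $I_\infty$ since $A\equiv A_\infty$ satisfies $(A1)$–$(A2)$. I would state the argument in two or three lines exactly as above.
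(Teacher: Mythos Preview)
Your proof is correct and follows essentially the same approach as the paper: both use (A1) to obtain the pointwise inequality $I(u)\le I_\infty(u)$ on $S(c)$ and then pass to the infimum. The paper phrases this via a minimizing sequence $\{u_n\}$ for $\gamma_\infty(c)$ and writes $\gamma(c)\le I(u_n)\le I_\infty(u_n)<\gamma_\infty(c)+\tfrac{1}{n}$, whereas you take the infimum directly; the content is the same.
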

\begin{proof}
For every $c>0$, we choose $\{u_n\} \subset S(c)$ such that $I_{\infty}(u_n)< \gamma_{\infty} +\dfrac{1}{n}$. By (A1), we have
	\begin{equation}\nonumber
		\gamma(c) \le I(u_n) \le I_{\infty}(u_n) <\gamma_{\infty}(c)+\frac{1}{n},
	\end{equation}
	which implies that $\gamma(c)\le \gamma_{\infty}(c)$.
\end{proof}

\begin{lemma}\label{1 continuous}
$\gamma(c)$ is  continuous on $(0,\infty)$.
\end{lemma}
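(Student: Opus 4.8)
The plan is to prove continuity at an arbitrary point $c_0\in(0,\infty)$ by taking an arbitrary sequence $c_n\to c_0$ and establishing the two one-sided bounds $\limsup_{n\to\infty}\gamma(c_n)\le\gamma(c_0)$ and $\liminf_{n\to\infty}\gamma(c_n)\ge\gamma(c_0)$. The common device is the $L^2$-rescaling in the mass variable: for $u\in S(c)$ and $\theta>0$ the function $\theta u$ lies in $S(\theta^2 c)$ and
$$I(\theta u)=\frac{\theta^2}{2}B(u)+\frac{\theta^4}{4}C(u)-\frac{\theta^p}{p}\int_{\R^3}A(x)|u|^p\,dx,$$
so $I(\theta u)\to I(u)$ as $\theta\to1$, and more importantly this convergence is uniform over any family of $u$'s for which $B(u)$, $C(u)$ and $\int_{\R^3}A(x)|u|^p\,dx$ remain bounded.

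For the $\limsup$ inequality I would fix $\varepsilon>0$, pick $u\in S(c_0)$ with $I(u)\le\gamma(c_0)+\varepsilon$, and set $\theta_n=\sqrt{c_n/c_0}\to1$; then $\theta_n u\in S(c_n)$ gives $\gamma(c_n)\le I(\theta_n u)\to I(u)\le\gamma(c_0)+\varepsilon$, and letting $n\to\infty$ and then $\varepsilon\to0$ yields $\limsup_{n\to\infty}\gamma(c_n)\le\gamma(c_0)$. For the $\liminf$ inequality I would choose near-minimizers $u_n\in S(c_n)$ with $I(u_n)\le\gamma(c_n)+1/n$. Since $\gamma(c_n)\le0$ by Lemma \ref{1 well defined}, we have $I(u_n)\le1$, and the subcritical coercivity estimate \eqref{1 equ3.1 bound} — in which the power $\tfrac{3}{2}(p-2)$ is strictly below the quadratic power because $p<\tfrac{10}{3}$ — forces $\|\nabla u_n\|_2$ to stay bounded; together with $\|u_n\|_2^2=c_n\to c_0$ this makes $\{u_n\}$ bounded in $H^1(\R^3)$, hence $B(u_n)$, $C(u_n)$ (via Hardy–Littlewood–Sobolev) and $\int_{\R^3}A(x)|u_n|^p\,dx$ (via the Sobolev embedding and $A\in L^\infty$) are all bounded. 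Then with $\tilde\theta_n=\sqrt{c_0/c_n}\to1$ one gets $\tilde\theta_n u_n\in S(c_0)$ and
$$\gamma(c_0)\le I(\tilde\theta_n u_n)=I(u_n)+o(1)\le\gamma(c_n)+\tfrac{1}{n}+o(1),$$
so that $\gamma(c_0)\le\liminf_{n\to\infty}\gamma(c_n)$. Combining the two bounds gives $\gamma(c_n)\to\gamma(c_0)$, i.e. $\gamma$ is continuous on $(0,\infty)$.

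The only non-formal point is the uniform $H^1$-boundedness of the near-minimizing sequence $\{u_n\}$, and this is exactly where the $L^2$-subcritical restriction $p\in(2,\tfrac{10}{3})$ is used, through the gap between the exponents $\tfrac{3}{2}(p-2)<2$ in \eqref{1 equ3.1 bound}; once that boundedness is in hand, everything reduces to the elementary two-sided comparison via the mass-rescaling $\theta\mapsto\theta u$ and the continuity of $\theta\mapsto I(\theta u)$ on bounded sets. (The subadditivity of Lemma \ref{1 lesubadditivity} is not needed for this argument, though it does show incidentally that $\gamma$ is nonincreasing.)
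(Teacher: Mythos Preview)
Your proof is correct and follows essentially the same approach as the paper: both argue via the mass-rescaling $u\mapsto\sqrt{c'/c}\,u$ between $S(c)$ and $S(c')$, using the coercivity estimate \eqref{1 equ3.1 bound} (valid because $\tfrac{3}{2}(p-2)<2$) to get $H^1$-boundedness of near-minimizers and hence $I(\theta_n u_n)=I(u_n)+o(1)$ when $\theta_n\to1$. The only cosmetic difference is that for the $\limsup$ direction you fix a single near-minimizer $u\in S(c_0)$ and rescale it, whereas the paper uses a full minimizing sequence $\{v_n\}\subset S(c_0)$; your version is in fact slightly cleaner since no boundedness argument is needed on that side.
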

\begin{proof}
	For any $c>0$, let $c_n\to c$ and $u_n \subset S(c_n)$ such that $I(u_n)< \gamma(c_n) +\dfrac{1}{n}\le \dfrac{1}{n}$. Then \eqref{1 equ3.1 bound} and $2<p<\frac{10}{3}$ imply that $\{u_n\}$ is bounded in $H^1(\R^3)$. Hence,
	\begin{equation}\label{1 equ5.1 left}
		\begin{aligned}
			\gamma(c)&\le I\left(\sqrt{\dfrac{c}{c_n}}u_n\right)\\
			&=\dfrac{c}{2c_n}\|u_n\|_2^2+\dfrac{1}{4}\left(\dfrac{c}{c_n}\right)^2 \int_{\R^3}\int_{\R^3}\frac{|u_n(x)|^2|u_n(y)|^2}{|x-y|}dxdy-\dfrac{1}{p}\left(\dfrac{c}{c_n}\right)^{\frac{p}{2}}\int_{\R^3}A(x)|u_n|^pdx\\
			&=I(u_n)+o(1) \le \gamma(c_n) +o(1).
		\end{aligned}
	\end{equation}
	On the other hand, let  $\{v_n\}$ be any minimizing sequence with respect to $\gamma(c)$, we have
	\begin{equation}\label{1 equ5.2 right}
		\gamma(c_n)\le I(\sqrt{\dfrac{c_n}{c}}v_n) \le I(v_n)+o(1) = \gamma(c) +o(1).
	\end{equation}
From \eqref{1 equ5.1 left} and \eqref{1 equ5.2 right}, we derive at $\lim\limits_{n\to \infty}\gamma(c_n)=\gamma(c)$.
\end{proof}

In the following lemma, we use the strict subadditivity
\begin{equation}\label{1 20231008-e3}
	\gamma (c+c') < \gamma(c) +\gamma(c'),~~\forall c,c'\in \mathcal{U}
\end{equation}
and $\gamma(c)<0$ to show that $\gamma(c)$ is achieved for any $c\in\mathcal{U}$,
where $\mathcal{U}$ is some interval in $\R^+$.
The proof of \eqref{1 20231008-e3} and $\gamma(c)<0$ will be postponed to subsections 2.1 and 2.2, respectively in case $p\in(2,3)$ and case $p\in\left(3,\frac{10}{3}\right)$.
Indeed, $\mathcal{U}=(0,+\infty)$ for case $ p\in(2,3)$; $\mathcal{U}=[\bar{c},+\infty)$ for case $ p\in\left(3,\frac{10}{3}\right)$, where $\bar{c}>0$ is defined by \eqref{1.1 equ bar(c)} in subsection 2.2.

\begin{lemma}\label{1 achieved}
	Assume that \eqref{1 20231008-e3} and $\gamma(c)<0$ hold. Then $\gamma(c)$ is achieved for all $c\in \mathcal{U}$.
\end{lemma}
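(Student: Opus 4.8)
The plan is a concentration–compactness argument: I will show that every minimizing sequence for $\gamma(c)$ is, after a suitable translation, relatively compact in $H^1(\R^3)$, so that its weak limit realizes the infimum. First I would fix $\{u_n\}\subset S(c)$ with $I(u_n)\to\gamma(c)$; by \eqref{1 equ3.1 bound} and $p<\frac{10}{3}$ the sequence is bounded in $H^1(\R^3)$, so along a subsequence $u_n\rightharpoonup u$ in $H^1(\R^3)$, $u_n\to u$ in $L^q_{\mathrm{loc}}(\R^3)$ for all $q\in[1,6)$ and a.e. The first step is to exclude vanishing: if $\sup_{y\in\R^3}\int_{B_1(y)}|u_n|^2\,dx\to 0$, then a standard vanishing lemma gives $u_n\to 0$ in $L^p(\R^3)$, hence $\int_{\R^3}A(x)|u_n|^p\,dx\to 0$ since $A\in L^\infty(\R^3)$; because $B(u_n),C(u_n)\ge 0$ this forces $\liminf_n I(u_n)\ge 0$, contradicting $\gamma(c)<0$. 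Hence there exist $\delta>0$ and $y_n\in\R^3$ with $\int_{B_1(y_n)}|u_n|^2\,dx\ge\delta$.

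Next I would split off the bubble near $y_n$, distinguishing two cases. If $\{y_n\}$ is bounded, then local $L^2$-compactness gives $u\neq 0$; with $m:=\|u\|_2^2\in(0,c]$ and $v_n:=u_n-u\rightharpoonup 0$ (so $\|v_n\|_2^2\to c-m$) I would combine the nonlocal Brezis–Lieb identities of Lemma \ref{1 le 2.2} with the decay $A(x)\to A_\infty$ as $|x|\to\infty$ — which lets one replace $A$ by $A_\infty$ in $\int A|v_n|^p\,dx$ up to $o(1)$ — to obtain $I(u_n)=I(u)+I_\infty(v_n)+o(1)$. If $|y_n|\to\infty$, then $\tilde u_n:=u_n(\cdot+y_n)\rightharpoonup\tilde u\neq 0$ in $H^1(\R^3)$, and since $B$, $C$ are translation invariant while $A(\cdot+y_n)\to A_\infty$ in the relevant sense, the same computation yields, with $m:=\|\tilde u\|_2^2\in(0,c]$ and $\zeta_n:=\tilde u_n-\tilde u$, the decomposition $I(u_n)=I_\infty(\tilde u)+I_\infty(\zeta_n)+o(1)$. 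In both cases write $\phi$ for the bubble and $\zeta_n$ for the remainder, so that $\phi\in S(m)$, $\|\zeta_n\|_2^2\to c-m$, $I(u_n)=J(\phi)+I_\infty(\zeta_n)+o(1)$ with $J\in\{I,I_\infty\}$, and $J(\phi)\ge I(\phi)\ge\gamma(m)$ by (A1).

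It remains to rule out dichotomy, i.e. to show $m=c$. If $m=c$, then $\zeta_n\to 0$ in $L^2(\R^3)$, hence (being bounded in $H^1$) in $L^q(\R^3)$ for every $q\in[2,6)$, so $I_\infty(\zeta_n)\ge o(1)$ and $\gamma(c)=\lim_n I(u_n)\ge J(\phi)\ge I(\phi)\ge\gamma(c)$; therefore $I(\phi)=\gamma(c)$ with $\phi\in S(c)$, i.e. $\gamma(c)$ is achieved (and in fact $\zeta_n\to 0$ in $H^1$, so the minimizing sequence converges strongly up to a translation). If $0<m<c$, then normalizing $\zeta_n$ onto $S(c-m)$ as in the proof of Lemma \ref{1 continuous} gives $I_\infty(\zeta_n)\ge\gamma_\infty(c-m)+o(1)\ge\gamma(c-m)+o(1)$ by Lemma \ref{1 gammainfty}, whence $\gamma(c)\ge\gamma(m)+\gamma(c-m)$; since $m,\,c-m\in\mathcal{U}$, this contradicts \eqref{1 20231008-e3}. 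Hence $m=c$.

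The main obstacle is the splitting in the second step. Since $I$ is nonautonomous it is not translation invariant, so the Brezis–Lieb decomposition must be routed through the limit functional $I_\infty$, and the delicate point is to control $\int_{\R^3}\big(A(\cdot+y_n)-A_\infty\big)|\zeta_n|^p\,dx$: here one uses that $\{x\in\R^3:|A(x)-A_\infty|>\varepsilon\}$ is bounded (by (A1)) together with the fact that, after undoing the translation, the remainder still tends weakly to $0$ in $H^1(\R^3)$. A secondary subtlety, relevant only for $p\in(3,\frac{10}{3})$, is to check that both split masses $m$ and $c-m$ lie in $\mathcal{U}=[\bar c,+\infty)$; this is precisely why $\mathcal{U}$ is chosen bounded away from $0$, and it must be reconciled with the behaviour of $\gamma$ on $(0,\bar c)$ established in subsection 2.2.
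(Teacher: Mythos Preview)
Your plan is the same concentration--compactness argument the paper runs, with the same ingredients: boundedness from \eqref{1 equ3.1 bound}, exclusion of vanishing via $\gamma(c)<0$, the Brezis--Lieb splittings of Lemma~\ref{1 le 2.2}, comparison with $I_\infty$ through Lemma~\ref{1 gammainfty}, and the strict subadditivity to kill dichotomy. There is, however, one organizational slip. You branch on whether the concentration centers $\{y_n\}$ stay bounded, but in the branch $|y_n|\to\infty$ your claimed identity $I(u_n)=I_\infty(\tilde u)+I_\infty(\zeta_n)+o(1)$ requires
\[
\int_{\R^3}\bigl(A(x+y_n)-A_\infty\bigr)|\tilde u_n|^p\,dx\to 0,
\]
and after localizing to $\{|x+y_n|\le R\}$ this becomes $\int_{B_R(0)}|u_n|^p\,dx\to 0$, i.e.\ the \emph{original} weak limit $u$ must vanish. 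That is not implied by $|y_n|\to\infty$ alone: a minimizing sequence can carry mass both near the origin and at an escaping bump, so the particular $y_n$ you pick may run off to infinity even though $u\neq 0$. The paper avoids this by branching first on $\bar u\neq 0$ versus $\bar u=0$ and translating only in the second case; with that reorganization your argument goes through verbatim.

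Your closing caveat about whether $m$ and $c-m$ lie in $\mathcal{U}$ is well placed. In fact the paper's own proof applies \eqref{1 20231008-e3} to an arbitrary split $\|\bar u\|_2^2+(c-\|\bar u\|_2^2)=c$ without verifying membership in $\mathcal{U}$; what is actually being used (and what Lemmas~\ref{1.2 stronglysubadditivity} and~\ref{1.1 subadditivity} are meant to supply) is the stronger statement $\gamma(c)<\gamma(\alpha)+\gamma(c-\alpha)$ for every $\alpha\in(0,c)$ when $c\in\mathcal{U}$, not merely for $\alpha,c-\alpha\in\mathcal{U}$.
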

\begin{proof}
Let $\{u_n\}\subset S(c)$ be any minimizing sequence with respect to $\gamma(c)$. Then \eqref{1 equ3.1 bound} and $2<p<\frac{10}{3}$ imply that $\{u_n\} $ is bounded in $H^1 (\R^3)$. Up to a subsequence, there exists $\bar{u} \in H^1 (\R^3)$ such that
\begin{equation}\nonumber
	u_n\rightharpoonup \bar{u} \; in \; H^1(\R^3) , \ u_n\to \bar{u} \; in \; L_{loc}^q(\R^3)\;\text{for}~ q\in (2,2^*), \ u_n\to \bar{u} \ a.e.\ in\ \R^3.
\end{equation}

Case (I): $\bar{u}\neq 0$. From Lemma \ref{1 le 2.1}, Lemma \ref{1 le 2.2} and Lemma \ref{1 continuous}, we have
	 \begin{equation}\nonumber
		\begin{aligned}
			\gamma(c)&=\lim\limits_{n\to \infty}I(u_n) =I(\bar{u}) +\lim\limits_{n\to \infty}I(u_n-\bar{u}) \\
			&\ge \gamma(\|\bar{u}\|^2_2) + \lim\limits_{n\to \infty}\gamma(\|u_n - \bar{u}\|^2_2) = \gamma(\|\bar{u}\|^2_2) + \gamma(c-\|\bar{u}\|^2_2).
		\end{aligned}
	\end{equation}
	If $\|\bar{u}\|^2_2 < c$, then by \eqref{1 20231008-e3} we get
$$\gamma(c) \ge \gamma(\|\bar{u}\|^2_2) +\gamma(c-\|\bar{u}\|^2_2) > \gamma(c),$$
 a contradiction. Thus $\|\bar{u}\|^2_2 = c$ and $\bar{u}\in S(c)$. Then, combining $\|u_n\|_2\to\|\bar{u}\|_2$ and $u_n\rightharpoonup \bar{u}$ in $L^2(\R^3)$ we have $u_n\to \bar{u}$ in $L^2(\R^3)$. Moreover, by Gagliardo-Nirenberg inequality and the boundedness of $A(\cdot)$, we get
 $$\int_{\R^3}A(x)|u_n|^pdx\to\int_{\R^3}A(x)|\bar{u}|^pdx.$$
Hence, by the weak semicontinuity of norms we have
	\begin{equation}\nonumber
		\gamma(c)=\lim\limits_{n\to \infty}I(u_n)\geq I(\bar{u})\geq \gamma(c).
	\end{equation}
Therefore $\gamma(c)=I(\bar{u})$. This shows that $\gamma(c) $ is achieved at $\bar{u}\in S(c)$.\par
Case (II): $\bar{u}= 0$. By (A1), for any $\varepsilon>0$, there exists $R>0$ large enough such that
$$|A_{\infty}-A(x)|<\varepsilon~~ \forall x\in B^c_R(0).$$
Then by $u_n\to 0$ in $L_{loc}^q(\R^3)$ and Sobolev imbedding inequality we obtain
\begin{equation}\label{2 equa.2 third0limit}
		\begin{aligned}
			&\left|\int_{\R^3}(A_{\infty}-A(x))|u_n|^pdx\right|\\
			&\le \left|\int_{B_R(0)}\left(A_{\infty}-A(x)\right)|u_n|^pdx\right|+\left|\int_{B_R^c(0)}\left(A_{\infty}-A(x)\right)|u_n|^pdx\right|\\
	&\le o(1)+\varepsilon \int_{B_R^c(0)}|u_n|^pdx\le o(1)+ C\varepsilon.
		\end{aligned}
	\end{equation}
Therefore, by \eqref{2 equa.2 third0limit} and the arbitrariness of $\varepsilon$ we derive at
 \begin{equation}\label{1 equa.3 Iinftlimit}
		\lim\limits_{n\to\infty}I_{\infty}(u_n)=\lim\limits_{n\to\infty}I(u_n)= \gamma(c).
	\end{equation}
	Next, we show
\begin{equation}\label{1 equa.4 lion local lower limit}
		\delta :=\varlimsup\limits_{n \to \infty}\sup_{u\in \R^3}\int_{B_1(y)}|u_n|^2dx>0,
	\end{equation}
	where $B_1(y)=\{x\in \R^3: |x-y|<1\}$.
	Suppose otherwise, then by Lions' concentration compactness principle $\cite{lionI1984,lionII1984}$
$$\ u_n\to 0 \; in \; L^q(\R^3)\; for \; q\in (2,2^*).$$
This together with (A1) implies that
\begin{equation}\label{1 equa.5}
	\lim\limits_{n\to \infty}\int_{\R^3}A(x)|u_n(x)|^pdx=0.
	\end{equation}
	Then  by $\eqref{1 equa.5}$ and $\gamma(c)<0$, we get
\begin{equation}\nonumber
		0>\gamma(c)=\lim\limits_{n\to \infty}I(u_n)=\lim\limits_{n\to \infty}\left(\dfrac{1}{2}\|\nabla u_n\|^2_2+\frac{1}{4}\int_{\R^3}\int_{\R^3}\frac{|u_n(x)|^2|u_n(y)|^2}{|x-y|}dxdy \right)\geq 0,
	\end{equation}
a contradiction. Hence, $\delta >0$, and then there exists a sequence $\{y_n\}^{\infty}_{n=1} \subset \R^3$ such that \begin{equation}\label{1 equa.6}
		\int_{B_1(y_n)}|u_n|^2dx>\dfrac{\delta}{2}>0.
	\end{equation}
	Let $\hat{u}_n(x)=u_n(x+y_n)$, it is easy to see that $\|\hat{u}_n\|^2_2=\|u_n\|^2_2=c$, $\|\nabla\hat{u}_n\|^2_2=\|\nabla u_n\|^2_2$ and
\begin{equation}\label{1 equa.7}
		I_{\infty}(\hat{u}_n)\to \gamma(c).
	\end{equation}
Since $\hat{u}_n$ is bounded in $H^1 (\R^3)$, there exists $\hat{u} \in H^1 (\R^3)$ such that
	\begin{equation}\label{1 equa.8 functionrowconvergence2}
		\hat{u}_n\rightharpoonup \hat{u} \; in \; H^1(\R^3) , \ \hat{u}_n\to \hat{u} \; in \; L_{loc}^q(\R^3)\; q\in [2,2^*), \ \hat{u}_n\to \hat{u} \ a.e.\ in\ \R^3.
	\end{equation}
And by \eqref{1 equa.6}, we see that $\hat{u}\not\equiv0$.
Combining \eqref{1 equa.7}, \eqref{1 equa.8 functionrowconvergence2}, Lemma \ref{1 le 2.1}, Lemma \ref{1 le 2.2}, Lemma \ref{1 gammainfty} and  Lemma \ref{1 continuous}, we get
	\begin{equation}\label{1 equa.9}
		\begin{aligned}
			\gamma_{\infty}(c) &\geq \gamma(c) = \lim\limits_{n\to \infty} I_{\infty}(\hat{u}_n)=I_{\infty}(\hat{u})+\lim\limits_{n\to \infty}I_{\infty}(\hat{u}_n-\hat{u})\\
			&\geq \gamma_{\infty}(\|\hat{u}\|^2_2)+\lim\limits_{n\to \infty}\gamma_{\infty}(\|\hat{u}_n-\hat{u}\|^2_2) = \gamma_{\infty}(\|\hat{u}\|^2_2)+\gamma_{\infty}(c-\|\hat{u}\|^2_2).
		\end{aligned}
	\end{equation}
	If $\|\hat{u}\|^2_2 <c$, then $\eqref{1 equa.9}$ and \eqref{1 20231008-e3} imply
	\begin{equation}\nonumber
		\gamma_{\infty}(c) \geq \gamma_{\infty}(\|\hat{u}\|^2_2) + \gamma_{\infty}(c-\|\hat{u}\|^2_2) > \gamma_{\infty}(c),
	\end{equation}
	which is impossible. Thus $\|\hat{u}\|^2_2 = c$. Therefore
$$\hat{u}_n \to \hat{u}\; in \;L^q(\R^3)\; for \; 2\leq q < 2^*.$$
Thanks to the lower semicontinuity of norms, we obtain
\begin{equation}\nonumber
	 \gamma(c)=\lim\limits_{n\to \infty}I_{\infty}(\hat{u}_n)\geq I_{\infty}(\hat{u})\geq I(\hat{u})\geq \gamma(c).
\end{equation}
Thus $\gamma(c)$ is achieved.
\end{proof}

Now, we can prove Theorem \ref{1 th}.

{\bf The proof of Theorem \ref{1 th}.}
For any $c\in\mathcal{U}$, by Lemma \ref{1 achieved} there exists $\bar{u}_c \in S(c)$  such that
$I(u_c) = \gamma(c)$ and $I|'_{S(c)}(u_c)=0$. In view of the Lagrange multiplier theorem, there exists $\lambda_c\in\R$ such that\begin{equation}\nonumber
	I'(u_c)+\lambda_cu_c=0.
\end{equation}
Therefore, $(\hat{u}_c, \lambda_c)$ is a solution of $\eqref{1.1.0}$.

\vskip4mm
{\subsection{ $p\in(2,3)$}}
In the subsection, for any $c\in (0,\infty)$ we prove the strict subadditivity inequality \eqref{1 20231008-e3} and $\gamma(c)<0$ in case $p\in (2,3)$.

\begin{lemma}\label{1.2 less0}
 $0>\gamma(c)>-\infty$ for any $c\in (0,\infty)$.
\end{lemma}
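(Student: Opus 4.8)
\emph{Proof proposal.} By Lemma~\ref{1 well defined} we already have $-\infty<\gamma(c)\le 0$, so everything comes down to the \emph{strict} inequality $\gamma(c)<0$; note that this will use only hypothesis (A1). The natural idea is to test $I$ along the $L^2$-preserving dilations $u^t(x)=t^{3/2}u(tx)$: from $B(u^t)=t^2B(u)$, $C(u^t)=tC(u)$, the change of variables $\int_{\R^3}A(x)|u^t|^p\,dx=t^{\frac{3}{2}(p-2)}\int_{\R^3}A(t^{-1}y)|u(y)|^p\,dy$, and $A\ge A_\infty$ from (A1),
\[
I(u^t)\ \le\ \frac{t^2}{2}B(u)+\frac{t}{4}C(u)-\frac{A_\infty}{p}\,t^{\frac{3}{2}(p-2)}\int_{\R^3}|u|^p\,dx\ =:\ g_u(t).
\]
Since $\frac{3}{2}(p-2)<2$ for $p\in(2,\frac{10}{3})$ and moreover $\frac{3}{2}(p-2)<1$ for $p<\frac{8}{3}$, in the range $p\in(2,\frac{8}{3})$ it suffices to fix any $u\in S(c)$ and let $t\to 0^+$: then $g_u(t)\to 0^-$, so $\gamma(c)<0$. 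The obstruction is the range $p\in[\frac{8}{3},3)$, where the Coulomb term $\frac{t}{4}C(u)$ is no longer beaten by the nonlinearity as $t\to 0$, so a single dilated profile does not suffice and the shape of $u$ must also be used.

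For $p\in[\frac{8}{3},3)$ I would take a spread-out, many-bump test function. Fix $\psi\in C_0^\infty(\R^3)$ with $\text{supp}\,\psi\subset B_1(0)$ and $\|\psi\|_2=1$, and for $N\in\N$ and $R>2$ put $u_{N,R}:=\sum_{k=1}^{N}(c/N)^{1/2}\psi(\,\cdot\,-kRe_1)$. The $N$ translates have pairwise disjoint supports, so $u_{N,R}\in S(c)$ and
\[
B(u_{N,R})=c\,\|\nabla\psi\|_2^2,\qquad \int_{\R^3}|u_{N,R}|^p\,dx=c^{\frac{p}{2}}\|\psi\|_p^p\,N^{1-\frac{p}{2}},
\]
both independent of $R$. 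For the nonlocal term, $|u_{N,R}|^2=\sum_k(c/N)|\psi(\,\cdot\,-kRe_1)|^2$, so only the diagonal interactions give a term of order $c^2N^{-1}$, while the off-diagonal ones are controlled via $|x-y|\ge|j-k|R-2$ on the supports of the $j$-th and $k$-th bumps and tend to $0$ as $R\to\infty$ for each fixed $N$. Hence we may choose $R=R_N$ so large that $C(u_{N,R_N})\le 2c^2KN^{-1}$, where $K:=\int_{\R^3}\int_{\R^3}\frac{|\psi(x)|^2|\psi(y)|^2}{|x-y|}\,dx\,dy$.

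Finally dilate once more: with $t_N:=N^{-\gamma}$, $\gamma>0$, applying the bound for $g_u$ to $u=u_{N,R_N}$ gives
\[
I\big((u_{N,R_N})^{t_N}\big)\ \le\ C_1N^{-2\gamma}+C_2N^{-1-\gamma}-C_3N^{-\frac{(p-2)(1+3\gamma)}{2}},
\]
with $C_1,C_2,C_3>0$ independent of $N$. We want the last (negative) power to be the largest, i.e. $\frac{(p-2)(1+3\gamma)}{2}<2\gamma$ and $\frac{(p-2)(1+3\gamma)}{2}<1+\gamma$. The first is equivalent to $\gamma>\frac{p-2}{10-3p}$; the second is automatic when $p\le\frac{8}{3}$ and is equivalent to $\gamma<\frac{4-p}{3p-8}$ when $p>\frac{8}{3}$. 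An elementary computation shows $\frac{p-2}{10-3p}<\frac{4-p}{3p-8}$ exactly when $p<3$, so an admissible $\gamma$ exists for every $p\in(2,3)$. Fixing such a $\gamma$, the negative term dominates for large $N$, hence $\gamma(c)\le I\big((u_{N,R_N})^{t_N}\big)<0$, which together with Lemma~\ref{1 well defined} yields $0>\gamma(c)>-\infty$.

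\emph{Main obstacle.} The real difficulty is that for $p\in[\frac{8}{3},3)$ the three terms of $I$ scale genuinely differently, and no single-parameter deformation of a fixed profile detects the negativity of $\gamma(c)$. The remedy requires combining two moves: spreading the $L^2$-mass over $N\to\infty$ far-separated unit bumps pushes the nonlocal energy down to order $N^{-1}$ while $\int|u|^p$ decays only like $N^{-(p-2)/2}$ — strictly slower, which is precisely what is needed — but leaves the kinetic energy of order $1$; a subsequent dilation by $t_N\to 0$ then suppresses the kinetic term. Reconciling these two scalings, i.e. the nonemptiness of the $\gamma$-window above, is exactly the inequality $p<3$; for $p\in(3,\frac{10}{3})$ it fails for small $c$, which is why that range only gives the conclusion for $c\ge\bar c$. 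The only genuine computation is the bookkeeping of the off-diagonal Coulomb interactions (the choice of $R_N$) and the comparison of the three competing powers of $N$.
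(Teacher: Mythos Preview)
Your argument is correct, but the paper's proof is considerably shorter and avoids the many-bump construction altogether. Instead of insisting on an $L^2$-preserving dilation, the paper uses the \emph{non}-mass-preserving scaling $u_t(x)=t^{2}u(tx)$, for which $\|u_t\|_2^2=t\,c$, $B(u_t)=t^{3}B(u)$, $C(u_t)=t^{3}C(u)$ and $\int A|u_t|^p=t^{2p-3}\int A(t^{-1}x)|u|^p$. Factoring $t^{3}$ and using $A\ge A_\infty$ from (A1) together with $2p-6<0$ (i.e.\ $p<3$), the bracket tends to $-\infty$ as $t\to 0^+$, so $I(u_t)<0$ for all small $t$. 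This yields $\gamma(s)<0$ for every small $s>0$, and then the subadditivity lemma (Lemma~\ref{1 lesubadditivity}) combined with $\gamma\le 0$ gives $\gamma(c)\le \gamma(s)+\gamma(c-s)<0$ for every $c>0$.

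The trade-off is this: by allowing the mass to vary, the paper reduces the whole proof to a one-parameter scaling plus an appeal to subadditivity, at the cost of relying on that extra lemma. Your approach stays inside $S(c)$ throughout and is self-contained, but because the $L^2$-preserving dilation alone cannot beat the Coulomb term once $p\ge \tfrac{8}{3}$, you are forced into the two-scale (many bumps plus dilation) construction. Your computation of the admissible $\gamma$-window is correct and nicely pinpoints why the construction breaks exactly at $p=3$; it is just more work than necessary here.
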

\begin{proof}
	Fix a $u\in S(c)$, $\gamma(c)>-\infty$ for any $c>0$ by Lemma \ref{1 well defined}. Now we show $0>\gamma(c)$. Choose a family $u_t(x)= t^2u(tx)$ such that $\|u_t\|_2^2 =tc$. By $2p-3>1$ and (A1), we have
\begin{equation}\nonumber
		I(u_t)=t^3\left[\dfrac{1}{2}\|\nabla u\|^2_2+\dfrac{1}{4}\int_{\R^3}\int_{\R^3}\frac{|u(x)|^2|u(y)|^2}{|x-y|}dxdy
		-\dfrac{1}{p}t^{2p-3}\int_{\R^3}A(t^{-1}x)|u|^pdx\right]< 0
	\end{equation}
	for sufficiently small $t>0$.  This shows that there exists a $t'$ such that
 \begin{equation}\nonumber
		\gamma(s)<0 \  \forall s\in (0,t'].
	\end{equation}
Combining Lemma \ref{1 lesubadditivity} and $\gamma(c)\le 0$ (Lemma \ref{1 well defined}), we deduce $0>\gamma(c)>-\infty$ for all $c>0$.
\end{proof}

\begin{lemma}\label{1.2 limit of 0}
$\lim\limits_{c\to 0}\dfrac{\gamma(c)}{c}=0$.
\end{lemma}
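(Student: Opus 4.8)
The plan is to squeeze $\gamma(c)/c$ between two bounds that both vanish as $c\to0^+$. Since Lemma \ref{1.2 less0} gives $\gamma(c)<0$ for every $c>0$, we get $\gamma(c)/c<0$, hence $\limsup_{c\to 0^+}\gamma(c)/c\le 0$ for free. The substance of the argument is a matching lower bound $\liminf_{c\to 0^+}\gamma(c)/c\ge 0$, which requires refining \eqref{1 equ3.1 bound} so as to keep explicit track of the dependence on $c$.

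For that lower bound I would first discard the nonnegative Coulomb term $\tfrac14 C(u)$ and estimate the remaining nonlinearity by the sharp Gagliardo--Nirenberg inequality $\|u\|_p^p\le C_{GN}\|\nabla u\|_2^{3(p-2)/2}\|u\|_2^{(6-p)/2}$ together with $A\in L^\infty$. This yields, for every $u\in S(c)$,
\[
I(u)\ \ge\ \frac12\|\nabla u\|_2^2-\frac{\|A\|_\infty C_{GN}}{p}\,c^{\frac{6-p}{4}}\,\|\nabla u\|_2^{\frac{3(p-2)}{2}}.
\]
Writing $s=\|\nabla u\|_2^2\ge0$ and $\beta=\tfrac34(p-2)$, which lies in $(0,1)$ precisely because $2<p<\tfrac{10}{3}$, the right-hand side is $\tfrac12 s-Kc^{(6-p)/4}s^{\beta}$ with $K=\|A\|_\infty C_{GN}/p$. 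This scalar function of $s$ has a unique minimum on $[0,\infty)$, attained at $s_*=(2\beta K)^{1/(1-\beta)}c^{\theta_0}$ with $\theta_0=\tfrac{(6-p)/4}{1-\beta}=\tfrac{6-p}{10-3p}$, and its minimal value equals $-m\,c^{\theta_0}$ for a constant $m>0$ depending only on $p$ and $\|A\|_\infty$. Taking the infimum over $u\in S(c)$ gives $\gamma(c)\ge -m\,c^{\frac{6-p}{10-3p}}$.

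Finally I would observe that for $p\in(2,3)$ the exponent satisfies $\tfrac{6-p}{10-3p}>1$ (it equals $1$ at $p=2$ and increases to $3$ at $p=3$), so $\gamma(c)/c\ge -m\,c^{\frac{6-p}{10-3p}-1}\to0$ as $c\to0^+$. Together with $\gamma(c)/c<0$ this gives $\lim_{c\to0}\gamma(c)/c=0$. The one delicate point is that the naive estimate \eqref{1 equ3.1 bound} only yields $\gamma(c)\ge-\text{const}$, which is useless after dividing by $c$; one really has to carry the factor $c^{(6-p)/4}$ coming from the $L^2$-norm in Gagliardo--Nirenberg through the one-variable minimization and then check that the resulting power of $c$ strictly exceeds $1$ — and that last check is exactly where the hypothesis $p\in(2,3)$, rather than merely $p\in(2,\tfrac{10}{3})$, is used.
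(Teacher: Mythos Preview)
Your argument is correct and is considerably more direct than the paper's. The paper proves the lemma by introducing the auxiliary functional $F(u)=\tfrac12\|\nabla u\|_2^2-\tfrac1p\int A(x)|u|^p\,dx$ (the Schr\"odinger energy without the Coulomb term), invoking an external result to say that $\tau(c):=\inf_{S(c)}F$ is attained at some $u_c$ with Lagrange multiplier $\lambda_c$, showing $-\tfrac{\lambda_c}{2}\le\tfrac{\tau(c)}{c}<0$, and then proving $\lambda_c\to 0$ as $c\to 0$ by a contradiction argument; the conclusion follows from the sandwich $\tfrac{\tau(c)}{c}\le\tfrac{\gamma(c)}{c}\le 0$. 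Your approach bypasses all of this: dropping the nonnegative Coulomb term and carrying the $c^{(6-p)/4}$ factor from Gagliardo--Nirenberg through an explicit one-variable minimization gives the quantitative bound $\gamma(c)\ge -m\,c^{(6-p)/(10-3p)}$ directly, with no need for existence of minimizers of any auxiliary problem. This is more elementary and self-contained, and it even yields an explicit rate.

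One small inaccuracy in your closing commentary: the exponent $\tfrac{6-p}{10-3p}$ is strictly greater than $1$ for \emph{every} $p\in(2,\tfrac{10}{3})$ (it is increasing in $p$ and tends to $+\infty$ as $p\uparrow\tfrac{10}{3}$), so the final check does not single out $p\in(2,3)$. Your proof therefore actually establishes the lemma on the full $L^2$-subcritical range, which is a slight bonus rather than a defect.
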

\begin{proof}
Under slightly general conditions about $A(\cdot)$, the existence of normalized (ground state) solutions for the nonautonomous Schr\"{o}dinger equation
\begin{equation}\label{A-equation}
	-\triangle u+\lambda u -A(x)|u|^{p-1}=0,~~p\in(2,\frac{10}{3})
\end{equation}
has been proved in \cite{sx2020nonautonomous}. Hence,
\begin{equation}\nonumber
	\tau(c) := \inf\limits_{u\in S(c)} F(u)
\end{equation}
is achieved at some $u_c$,
where $$F(u):=\frac{1}{2}\|\nabla u\|^2_2-\frac{1}{p}\int_{\R^3}A(x)|u|^pdx$$ is the energy functional  of \eqref{A-equation}.
By Lagrange multiplier theorem, the minimizer $u_c$ for $\tau(c)$ solves the equation $\eqref{A-equation}$, and $\lambda=\lambda_c $ is the Lagrange multiplier associated to the minimizer.
By calculation, we have
	\begin{equation}\label{A1 equt2.1}
	-\dfrac{\lambda_c}{2}=\dfrac{\|\nabla u_c \|^2_2-\int_{\R^3}A(x)|u_c|^pdx}{2\|u_c\|^2_2}\le  \dfrac{\frac{1}{2}\|\nabla u_c\|^2_2-\frac{1}{p}\int_{\R^3}A(x)|u_c|^pdx}{\|u_c\|^2_2}=\dfrac{F(u_c)}{c}=\dfrac{\tau(c)}{c}<0.
	\end{equation}
 In order to prove $\lim\limits_{c\to 0}\dfrac{\tau(c)}{c}=0$, it is enough to show that $\lim\limits_{c\to 0}\lambda_c=0$.
Suppose it is false, there exists a sequence $c_n\to 0$ such that $\lambda_{c_n}>M$ for some $M\in (0,1)$. Let $u_n$ be the minimizer of $\tau(c_n)$, we have
	\begin{equation}\nonumber
		\begin{aligned}
			M\|u_n\|_{H^1}^2 &\le \int_{\R^3}|\nabla u_n|^2dx +M\int_{\R^3}|u_n|^2dx \\
			&\le \|\nabla u_c\|^2_2+\lambda_{c_n}\int_{\R^3}|u_n|^2dx=\int_{\R^3}A(x)|u_n|^pdx \\
			&\le \|A\|_{\infty}\|u_n\|^p_p\leq C\|u_n\|^p_{H^1},
		\end{aligned}
	\end{equation}
	which implies that there exists $C'>0$ such that $\|\nabla u_n\|_2^2>C'>0$.
 However,
	\begin{equation}\nonumber
		0>\tau(c_n)=F(u_n)\geq \dfrac{1}{2}\|\nabla u_n\|_2^2-o(1), \text{ as }n\to\infty.
	\end{equation}
	This is a contradiction. Thus
	\begin{equation}\label{A1 equ2.2}
		\lim\limits_{c\to 0}\dfrac{\tau(c)}{c}=0.
	\end{equation}
		Notice that
	$
		\frac{\tau (c)}{c} \le \frac{\gamma (c)}{c} \leq0,
	$
	then by \eqref{A1 equ2.2} we complete the proof.
\end{proof}

Next, we prove that any critical point of $I(u)$ restricted to $S(c)$ satisfies the identity $P(u)\equiv0$, where
\begin{equation}\nonumber
	\begin{aligned}
		 P(u):=B(u)+\frac{1}{4}C(u)-\frac{1}{p}\int_{\R^3}\left(\frac{3}{2}\left(p-2\right)A(x)-\nabla A(x)\cdot x\right)|u|^pdx,
	\end{aligned}
\end{equation}
and
\begin{equation*}
	B(u)=\int_{\R^3}|\nabla u|^2dx,\quad C(u)=\int_{\R^3}\int_{\R^3}\frac{|u(x)|^2|u(y)|^2}{|x-y|}dxdy.
\end{equation*}
\begin{lemma}\label{1.2 pohozaev}
	If $u$ is a critical point of $I(u)|_{S(c)}$, then $P(u)\equiv0$.
\end{lemma}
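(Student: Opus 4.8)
The plan is to derive two integral identities satisfied by any constrained critical point, the Nehari identity and the Pohozaev identity, and then eliminate the Lagrange multiplier between them; the resulting relation is exactly $P(u)=0$. First, since $u$ is a critical point of $I|_{S(c)}$, the Lagrange multiplier rule produces $\lambda=\lambda(u)\in\R$ such that $u$ is a weak solution of
\begin{equation}\nonumber
-\Delta u+\lambda u+\bigl(|x|^{-1}*|u|^2\bigr)u=A(x)|u|^{p-2}u\quad\text{in }\R^3 .
\end{equation}
Testing this equation against $u$ and recalling $\|u\|_2^2=c$ gives the Nehari-type identity
\begin{equation}\nonumber
B(u)+\lambda c+C(u)=\int_{\R^3}A(x)|u|^p\,dx .
\end{equation}

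The substantive step is the Pohozaev identity
\begin{equation}\nonumber
\tfrac12 B(u)+\tfrac32\lambda c+\tfrac54 C(u)=\frac3p\int_{\R^3}A(x)|u|^p\,dx+\frac1p\int_{\R^3}\bigl(\nabla A(x)\cdot x\bigr)|u|^p\,dx .
\end{equation}
To justify it I would first upgrade the regularity of $u$: since $A\in C^1\cap L^\infty$ and $|x|^{-1}*|u|^2\in L^\infty(\R^3)$, a standard bootstrap in the equation gives $u\in W^{2,q}_{loc}(\R^3)$ for all $q$, hence $u\in C^1$, and $u,\nabla u\in L^2(\R^3)$. One then multiplies the equation by $x\cdot\nabla u$, integrates over a ball $B_R$, integrates by parts, and lets $R=R_n\to\infty$ along a sequence for which the boundary terms vanish (possible because $u,\nabla u\in L^2$). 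The local terms contribute $-\tfrac12 B(u)$ from $-\Delta u$, $\tfrac32\lambda c$ from $\lambda u$ (using $\int u\,(x\cdot\nabla u)\,dx=-\tfrac32 c$), and $-\tfrac1p\int(3A+\nabla A\cdot x)|u|^p$ from $A|u|^{p-2}u$ (using $\nabla\cdot(A(x)x)=3A+\nabla A\cdot x$); here $\int(\nabla A\cdot x)|u|^p\,dx$ is finite because, by Lemma \ref{1 lem3.1}, $-\nabla A(x)\cdot x\ge0$ and tends to $0$ as $|x|\to\infty$, so $x\mapsto\nabla A(x)\cdot x$ lies in $L^\infty(\R^3)$. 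For the Hartree term the cleanest route is to set $\phi_u=|x|^{-1}*|u|^2$, use $-\Delta\phi_u=4\pi|u|^2$, and run the Pohozaev computation on the system $(u,\phi_u)$ (equivalently, differentiate $t\mapsto C(u(t\,\cdot))=t^{-5}C(u)$ at $t=1$ and identify the derivative with $4\int\phi_u u\,(x\cdot\nabla u)\,dx$), which yields $\int\phi_u u\,(x\cdot\nabla u)\,dx=-\tfrac54 C(u)$ and hence the stated coefficient.

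Finally, multiplying the Nehari identity by $\tfrac32$ and subtracting from the Pohozaev identity cancels the term $\lambda c$; collecting the coefficients of $B(u)$, $C(u)$ and $\int_{\R^3}A(x)|u|^p\,dx$ (using $\tfrac3p-\tfrac32=-\tfrac{3(p-2)}{2p}$) leaves
\begin{equation}\nonumber
B(u)+\tfrac14 C(u)-\frac1p\int_{\R^3}\Bigl(\tfrac32(p-2)A(x)-\nabla A(x)\cdot x\Bigr)|u|^p\,dx=0 ,
\end{equation}
which is precisely $P(u)=0$. As a sanity check, the left-hand side above equals $\frac{\partial}{\partial t}I(u^t)\big|_{t=1}$, and since $t\mapsto u^t$ is a curve in $S(c)$ through $u$ at $t=1$ with $\partial_t u^t|_{t=1}$ tangent to $S(c)$, this derivative must vanish at a constrained critical point. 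The main obstacle is the Pohozaev identity itself — securing enough regularity and decay of $u$ to carry out the integrations by parts on $\R^3$ and, above all, extracting the coefficient $\tfrac54$ for the nonlocal term, which genuinely needs the auxiliary-function (or scaling) argument rather than a direct integration by parts.
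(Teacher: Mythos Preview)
Your proposal is correct and follows essentially the same route as the paper: derive the Nehari identity $F_\lambda(u)=0$ and the Pohozaev identity $P_\lambda(u)=0$ for the free functional, then take the linear combination $\tfrac32 F_\lambda(u)-P_\lambda(u)=P(u)$ to eliminate~$\lambda$. The only cosmetic difference is in the justification of the Pohozaev identity---the paper tests against the cut-off vector field $v_s(x)=\varphi(sx)\,x\cdot\nabla u(x)$ and lets $s\to0$, whereas you integrate over $B_R$ and let $R\to\infty$---but the underlying computation and the resulting coefficients (in particular the $\tfrac54$ for the nonlocal term) are identical.
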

\begin{proof} Since $u$ is a critical point of $I(u)$ restricted to $S(c)$, there exists a Lagrange multiplier $\lambda\in \R,$ such that
\begin{equation*}
	I^{\prime}(u)+\lambda u\equiv0.
\end{equation*}
Thus $u$ solves equation \eqref{1.1.0}.
For convenience, we denote
$D(u)=\int_{\R^3}|u|^2dx,$
and define
\begin{equation}\nonumber
	F_\lambda(u):=\langle S'_\lambda(u),u\rangle=B(u)+\lambda D(u)+C(u)-\int_{\R^3}A(x)|u(x)|^pdx,
\end{equation}
and
\begin{equation}\nonumber
	P_\lambda(u):=\frac{1}{2}B(u)+\frac{3}{2}\lambda D(u)+\frac{5}{4}C(u)-\frac{1}{p}\int_{\R^3}\left[3A(x)+\nabla A(x)\cdot x\right]|u|^pdx.
\end{equation}
Here, $S_{\lambda}(u)$ is the energy functional corresponding to equation \eqref{1.1.0}
\begin{equation}\nonumber
	S_{\lambda}(u)=\frac{1}{2}B(u)+\frac{1}{4}C(u)+\frac{\lambda}{2}D(u)-\frac{1}{p}\int_{\R^3}A(x)|u|^pdx.
\end{equation}
Clearly, $F_{\lambda}(u)\equiv0$, $S_\lambda(u) = I(u) + \frac{\lambda}{2}D(u) $ and
\begin{equation}\label{2 equP.1}
	\frac{3}{2}F_{\lambda}(u)-P_{\lambda}(u)=P(u).
\end{equation}
The conclusion $P(u)\equiv0$ is follows by \eqref{2 equP.1} and the following Pohozaev identity for the Schr$\ddot{o}$dinger-Poisson equation \eqref{1.1.0}:
$$P_{\lambda}(u)\equiv0~~\text{for~any~solution~}u~\text{of}~\eqref{1.1.0}.$$
Now all that remains is to prove the above Pohozaev identity. Take $\varphi\in C_{c}^{1}(\R^3)$ such that $\varphi=1$ on $B_1$. Testing equation \eqref{1.1.0} by $v_s$ with
$$v_s(x)=\varphi(sx)x\cdot \nabla u(x),~s>0,~x\in \R^3.$$
Then
\begin{equation}\label{eq:20220425-e2}
\int_{\R^3}\nabla u\cdot \nabla v_s dx+\lambda \int_{\R^3}u v_sdx +\int_{\R^3}(I_2\star |u|^2)uv_sdx =\int_{\R^3}A(x)|u|^{p-2}uv_sdx.
\end{equation}
According to the regularity theory of elliptic equations, we have $u\in H^2_{loc}(\R^3)$. Then by a direct computation,
\begin{equation}\label{eq:20220425-e3}
\begin{cases}
\lim_{s\rightarrow 0}\int_{\R^3}u v_sdx=-\frac{3}{2}\|u\|_2^2,\\
\lim_{s\rightarrow 0}\int_{\R^3}\nabla u\nabla v_sdx=-\frac{1}{2}\|\nabla u\|_2^2,\\
\lim_{s\rightarrow 0}\int_{\R^3}\int_{\R^3}(I_2\star |u|^2)uv_sdx=-\frac{5}{4}\int_{\R^3}(I_2\star |u|^2)|u|^2dx,\\
\lim_{s\rightarrow 0}\int_{\R^3}A(x)|u|^{p-2}uv_sdx=-\frac{1}{p}\int_{\R^3}\left[3A(x)+\nabla A(x)\cdot x\right]|u|^pdx.
\end{cases}
\end{equation}
Hence, by letting $s\rightarrow 0$ in \eqref{eq:20220425-e2}, we obtain $P_{\lambda}(u)\equiv0$.
\end{proof}

Now, we show that $\gamma(c)$ is achieved for a single point $c=c_0$.
\begin{lemma}\label{1.2 point achieved}
There exists a $c_0>0$ such that $\gamma(c_0)$ is achieved.
\end{lemma}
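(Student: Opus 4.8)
The plan is to pin down a good mass $c_0$ by studying the scale-linear ratio $c\mapsto\gamma(c)/c$. By Lemma~\ref{1.2 less0} this ratio is strictly negative, by Lemma~\ref{1.2 limit of 0} it tends to $0$ as $c\to0^+$, and by Lemma~\ref{1 continuous} it is continuous on $(0,\infty)$. Fix any threshold $a<0$ for which the sublevel set $\Sigma_a:=\{c>0:\gamma(c)/c\le a\}$ is nonempty (for instance $a=\gamma(1)$, so that $1\in\Sigma_a$). Then $\Sigma_a$ is closed in $(0,\infty)$, and since $\gamma(c)/c\to0>a$ as $c\to0^+$ it is bounded away from the origin; hence $c_0:=\inf\Sigma_a>0$, $c_0\in\Sigma_a$ (so $\gamma(c_0)\le ac_0$), and $\gamma(c)>ac$ for all $c\in(0,c_0)$. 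The crucial point is that $c_0$ admits no nontrivial splitting: if $\beta\in(0,c_0)$, then $\beta,\,c_0-\beta\in(0,c_0)$ and hence
\[
\gamma(\beta)+\gamma(c_0-\beta)>a\beta+a(c_0-\beta)=ac_0\ge\gamma(c_0).
\]

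I would then analyze a minimizing sequence $\{u_n\}\subset S(c_0)$ for $\gamma(c_0)$, following the proof of Lemma~\ref{1 achieved} but with the displayed inequality doing the job that strict subadditivity \eqref{1 20231008-e3} does there. By \eqref{1 equ3.1 bound} (recall $2<p<3$), $\{u_n\}$ is bounded in $H^1(\R^3)$, so up to a subsequence $u_n\rightharpoonup\bar u$ in $H^1$, a.e., and in $L^q_{loc}$ for $q\in(2,2^*)$. If $\bar u\neq0$: the Brezis--Lieb identities (Lemmas~\ref{1 le 2.1}--\ref{1 le 2.2}) and the continuity of $\gamma$ give $\gamma(c_0)=I(\bar u)+\lim_{n\to\infty}I(u_n-\bar u)\ge\gamma(\beta)+\gamma(c_0-\beta)$, where $\beta:=\|\bar u\|_2^2\in(0,c_0]$; the displayed inequality excludes $\beta<c_0$, so $\beta=c_0$, whence $u_n\to\bar u$ in $L^2$ (weak convergence together with convergence of the norms), hence in $L^p$ by Gagliardo--Nirenberg and the $H^1$-bound, and then convergence of the Coulomb and nonlinear terms together with lower semicontinuity of the Dirichlet term yields $I(\bar u)=\gamma(c_0)$.

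If $\bar u=0$: since $\gamma(c_0)<0$, vanishing is excluded, so there are $y_n$ with $|y_n|\to\infty$ and $\hat u_n:=u_n(\cdot+y_n)\rightharpoonup\hat u\neq0$. Using (A1) as in \eqref{2 equa.2 third0limit} and the translation invariance of $I_\infty$, $I_\infty(\hat u_n)=I(u_n)+o(1)\to\gamma(c_0)$; the Brezis--Lieb splitting for $I_\infty$ and $\gamma_\infty\ge\gamma$ (Lemma~\ref{1 gammainfty}) give $\gamma(c_0)\ge\gamma(\|\hat u\|_2^2)+\gamma(c_0-\|\hat u\|_2^2)$, and again the displayed inequality forces $\|\hat u\|_2^2=c_0$. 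Then $\hat u_n\to\hat u$ in $L^2$ and $L^p$, and since $A\ge A_\infty$,
\[
\gamma(c_0)=\lim_{n\to\infty}I_\infty(\hat u_n)\ge I_\infty(\hat u)\ge I(\hat u)\ge\gamma(c_0),
\]
so $\gamma(c_0)$ is achieved at $\hat u$.

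The principal obstacle is that strict subadditivity of $\gamma$ is not yet at our disposal (it is precisely what this lemma is a stepping stone towards), so at a generic mass a minimizing sequence could split with nothing to stop it; selecting $c_0$ as the \emph{smallest} mass at which $\gamma(c)/c$ reaches the level $a$ is the device that makes any nontrivial split arithmetically contradictory. A secondary, milder difficulty is the loss of translation invariance in the case $\bar u=0$, dealt with by transferring to $I_\infty$ as in Lemma~\ref{1 achieved}; note that even when the weak limit escapes to infinity with the full mass, the inequality $I\le I_\infty$ still identifies the limit profile as a genuine minimizer of $\gamma(c_0)$.
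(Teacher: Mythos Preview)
Your proposal is correct and follows essentially the same strategy as the paper. Both arguments select $c_0$ as a leftmost point where the ratio $\gamma(c)/c$ attains a certain negative level---the paper takes $c_0$ to be the smallest minimizer of $s\mapsto\gamma(s)/s$ on a fixed interval $(0,c]$, while you take $c_0=\inf\{c:\gamma(c)/c\le a\}$ for a fixed $a<0$---and in either case this choice forces $\gamma(s)/s>\gamma(c_0)/c_0$ for all $s\in(0,c_0)$, yielding the strict splitting inequality at $c_0$ needed to rerun the compactness argument of Lemma~\ref{1 achieved}.
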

\begin{proof}
	Fix a $c>0$, $\zeta:=\min\{s\in[0,c], \dfrac{\gamma(s)}{s}<0\}$ is well defined by  Lemma \ref{1 lesubadditivity}, Lemma \ref{1 continuous} and Lemma \ref{1.2 less0}. Then, $c_0:=\min\{s\in [0,c], \dfrac{\gamma(s)}{s}=\zeta \}$ is also well defined.
	By Lemma \ref{1.2 less0}, we see that $c_0>0$ and
\begin{equation}\nonumber
		\dfrac{\gamma(c_0)}{c_0} < \dfrac{\gamma(s)}{s}, \ \forall s\in [0,c_0).
	\end{equation}
Thus
\begin{equation}\nonumber
	\gamma(c_0)<\gamma(\alpha) +\gamma(c_0-\alpha), \;\forall \; \alpha>0 .
\end{equation}
With the same argument of Lemma \ref{1 achieved}, $\gamma(c_0)$ is achieved.
\end{proof}

Next, by Lemma \ref{1.2 point achieved} we prove the strong subadditivity inequality.
\begin{lemma}\label{1.2 stronglysubadditivity}
$\gamma (c+\tilde{c}) < \gamma(c) +\gamma(\tilde{c}) $ for any $c,\tilde{c}>0$.
\end{lemma}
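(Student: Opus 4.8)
The plan is to reduce the strict inequality to a single scaling estimate and then to exploit the minimality built into the construction of Lemma~\ref{1.2 point achieved}. First I would re-run that construction with the ambient value replaced by $c+\tilde c$: this produces $c_0\in(0,\,c+\tilde c\,]$ such that $\gamma(c_0)$ is attained at some $\bar u\in S(c_0)$ and
\[
\frac{\gamma(c_0)}{c_0}\le\frac{\gamma(s)}{s}\qquad\text{for every }s\in(0,\,c+\tilde c\,],
\]
with strict inequality whenever $s\in(0,c_0)$. As a constrained minimizer, $\bar u$ is a critical point of $I|_{S(c_0)}$, hence solves \eqref{1.1.0} for some $\lambda_0\in\R$, so $P(\bar u)=0$ by Lemma~\ref{1.2 pohozaev}; moreover $\gamma(c_0)=I(\bar u)<0$ by Lemma~\ref{1.2 less0}. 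Since $\bar u^{t}:=t^{3/2}\bar u(t\,\cdot)\in S(c_0)$ for every $t>0$, the map $t\mapsto I(\bar u^{t})$ attains its minimum at $t=1$, which is precisely the statement $P(\bar u)=0$.

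The heart of the argument is the scaling inequality
\[
\gamma(\theta c_0)<\theta\,\gamma(c_0)\qquad\text{for all }\theta>1 .
\]
To prove it I would use, for $\theta>1$, the test function $w_\theta:=\sqrt{\theta}\,\bar u^{t}\in S(\theta c_0)$ with a dilation $t=t(\theta)>0$ to be chosen, and compute
\[
I(w_\theta)=\frac{\theta t^{2}}{2}B(\bar u)+\frac{\theta^{2}t}{4}C(\bar u)-\frac{\theta^{p/2}t^{\frac32(p-2)}}{p}\int_{\R^3}A(t^{-1}x)|\bar u|^{p}\,dx .
\]
The three terms carry the incompatible homogeneities $\theta t^{2}$, $\theta^{2}t$ and $\theta^{p/2}t^{\frac32(p-2)}$, and it is exactly $\tfrac32(p-2)<\tfrac32<2$ (valid for $2<p<3$) that leaves room for $t(\theta)$ to offset the unfavourable super-linear scaling $\theta^{2}t$ of the Coulomb term. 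Using that $t\mapsto I(\bar u^{t})$ has a critical point at $t=1$ (so the first-order effect of the dilation is absent), that $\gamma(c_0)=I(\bar u)<0$, and (A1)--(A2) through Lemma~\ref{1 lem3.1} (to replace $\int A(t^{-1}x)|\bar u|^{p}$ by $\int A(x)|\bar u|^{p}$ with a controlled error of the right sign, via the monotonicity of $t\mapsto A(tx)$), I would select $t(\theta)$ so that $I(w_\theta)<\theta I(\bar u)=\theta\gamma(c_0)$, which yields $\gamma(\theta c_0)\le I(w_\theta)<\theta\gamma(c_0)$.

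Granting this, the conclusion is immediate. If $c_0<c+\tilde c$, choose any $s$ with $c_0<s\le c+\tilde c$; the minimality above gives $\gamma(s)/s\ge\gamma(c_0)/c_0$, while the scaling inequality with $\theta=s/c_0>1$ gives $\gamma(s)/s=\gamma(\theta c_0)/(\theta c_0)<\gamma(c_0)/c_0$, a contradiction. Hence $c_0=c+\tilde c$, and since $0<c,\tilde c<c_0$ the strict minimality yields $\gamma(c)/c>\gamma(c_0)/c_0$ and $\gamma(\tilde c)/\tilde c>\gamma(c_0)/c_0$, so
\[
\gamma(c)+\gamma(\tilde c)=c\cdot\frac{\gamma(c)}{c}+\tilde c\cdot\frac{\gamma(\tilde c)}{\tilde c}>(c+\tilde c)\frac{\gamma(c_0)}{c_0}=\gamma(c_0)=\gamma(c+\tilde c),
\]
which is the claimed strict inequality \eqref{1 equ4.1 subadditivity}.

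The only delicate point is the scaling inequality in the second paragraph: because the gradient, Coulomb and nonlinear terms of $I(w_\theta)$ scale with three different powers of $(\theta,t)$, the balancing is genuinely tight, and the net gain has to be extracted from $\gamma(c_0)<0$ together with the monotonicities of $t\mapsto A(tx)$ and of $t\mapsto\tfrac32(p-2)A(tx)-\nabla A(tx)\cdot(tx)$ furnished by (A1)--(A2), with $P(\bar u)=0$ playing the essential role of cancelling the otherwise obstructing first-order term in the dilation.
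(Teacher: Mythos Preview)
Your overall architecture coincides with the paper's: produce a point $c_0\le c+\tilde c$ where $\gamma(s)/s$ is minimized and $\gamma(c_0)$ is attained, then rule out $c_0<c+\tilde c$, and finally read off the strict subadditivity from the strict monotonicity of $\gamma(s)/s$. The difference, and the gap, lies entirely in how you exclude $c_0<c+\tilde c$.

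You try to prove the scaling inequality $\gamma(\theta c_0)<\theta\gamma(c_0)$ for $\theta>1$ \emph{directly} via a two-parameter test function $w_\theta=\sqrt{\theta}\,\bar u^{t(\theta)}$. But this is precisely the hard step, and you do not carry it out: you only assert ``I would select $t(\theta)$'' and note that ``the balancing is genuinely tight.'' Concretely, at $(\theta,t)=(1,1)$ the Pohozaev identity $P(\bar u)=0$ kills the $t$-derivative of $I(w_\theta)-\theta I(\bar u)$, but the $\theta$-derivative equals $\tfrac14 C(\bar u)-\tfrac{p-2}{2p}\int A|\bar u|^p$, whose sign is \emph{not} determined by $P(\bar u)=0$ and $I(\bar u)<0$ alone. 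So first-order information is insufficient, and you give no second-order or large-$t$ argument. As written, the proof is incomplete at its central point.

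The paper avoids this difficulty by reversing the logic. Assuming $c'<c$, the point $c'$ is an \emph{interior} minimum of $s\mapsto\gamma(s)/s$ on $[0,c]$; testing with the one-parameter family $\bar u^{\theta}(x)=\theta^{-1/2}\bar u(x/\theta)$ (so $\|\bar u^{\theta}\|_2^2=\theta^2 c'$) gives $K(\theta):=I(\bar u^{\theta})-\theta^2 I(\bar u)\ge 0$ near $\theta=1$, hence the additional identity $K'(1)=0$. This second equation, combined with $P(\bar u)=0$, pins down $B(\bar u)$ and $C(\bar u)$ in terms of $\int A|\bar u|^p$ and $\int(\nabla A\cdot x)|\bar u|^p$, and (using $-\nabla A\cdot x\ge 0$ from Lemma~\ref{1 lem3.1}) forces $I(\bar u)>0$, contradicting $\gamma(c')<0$. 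In short, the paper extracts a \emph{new identity} from interior minimality rather than trying to beat it with a test function. That is what your proposal is missing.
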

\begin{proof}
For every $c>0$, set $\zeta:=\min\{s\in[0,c], \dfrac{\gamma(s)}{s}<0\}$ and $c':=\min\{s\in [0,c] , \dfrac{\gamma(s)}{s}=\zeta \}$. By Lemma \ref{1.2 point achieved}, $c'>0$ and $\dfrac{\gamma(s)}{s} >\dfrac{\gamma(c')}{c'}$ for all $s\in [0,c')$, there exists a $\bar{u} \in S(c')$ such that $I(\bar{u})=\gamma(c')$.\par
	 Now, we prove that $c'$ is exactly $c$. We argue by contradiction assuming that $c'<c$ . Let $\bar{u}^{\theta}=\theta^{-\frac{1}{2}}\bar{u}(\dfrac{x}{\theta})$, we have $\|\bar{u}^{\theta}\|^2_2 = \theta^2\|\bar{u}\|^2_2$. By the definition of $c'$, there exists $\epsilon>0$ such that
	 \begin{equation}\nonumber
	 	\dfrac{\gamma(c')}{c'}\le \dfrac{\gamma(\|\bar{u}^{\theta}\|^2_2)}{\|\bar{u}^{\theta}\|^2_2} \text{ for all } \theta\in (1-\epsilon,1+\epsilon).
	 \end{equation}
Hence,
\begin{equation}\label{1.2 equ14.1}
	\dfrac{I(\bar{u}^{\theta})}{\theta^2 c'}=\dfrac{I(\bar{u}^{\theta})}{\|\bar{u}^{\theta}\|^2_2} \geq\dfrac{\gamma(\|\bar{u}^{\theta}\|^2_2)}{\|\bar{u}^{\theta}\|^2_2} \geq \dfrac{\gamma(c')}{c'} =\dfrac{I(\bar{u})}{c'}.
\end{equation}
Define
 \begin{equation}\nonumber
 	\begin{aligned}
 		K_{\bar{u}}(\theta):=&I(\bar{u}^{\theta}) -\theta^2I(\bar{u})\\
 		=&\frac{1}{2}\left(1-\theta^2\right)\|\nabla \bar{u}\|^2_2+\dfrac{1}{4}(\theta^3-\theta^2)\int_{\R^3}\int_{\R^3}\frac{|\bar{u}(x)|^2|\bar{u}(y)|^2}{|x-y|}dxdy\\
 		&-\dfrac{1}{p}\int_{\R^3}[\theta^{(3-\frac{p}{2})}A(\theta x)-\theta^2A(x)]|\bar{u}|^pdx.
 	\end{aligned}
 \end{equation}
By \eqref{1.2 equ14.1}, $K_{\bar{u}}(\theta)\geq0$ for $\theta\in B_{\epsilon}(1)$.
Moreover, by \eqref{1.2 equ14.1} we see that $K_{\bar{u}}(\theta)$ minimizes at point $\theta=1$, and thus
\begin{equation}\label{20231126-e2}
 	\begin{aligned}
 		0=\dfrac{d}{d \theta} K_{\bar{u}}(1)=&-\|\nabla\bar{u}\|^2_2+\dfrac{1}{4}\int_{\R^3}\int_{\R^3}\frac{|\bar{u}(x)|^2|\bar{u}(y)|^2}{|x-y|}dxdy\\
 		&-\dfrac{1}{p}\int_{\R^3}\left[\left(1-\frac{p}{2}\right)A(x)+\nabla A(x)\cdot x\right]|\bar{u}|^pdx.
 	\end{aligned}
 \end{equation}
Then by \eqref{20231126-e2} and Lemma \ref{1.2 pohozaev}, we derive at
\begin{equation}\label{1.2 equ14.3}
	\left\{\begin{aligned}
		&\dfrac{1}{4}\int_{\R^3}\int_{\R^3}\frac{|\bar{u}(x)|^2|\bar{u}(y)|^2}{|x-y|}dxdy=\dfrac{1}{p}\int_{\R^3}\frac{p-2}{2}A(x)|\bar{u}|^pdx,\\
		&\frac{1}{2}\|\nabla\bar{u}\|^2_2=\dfrac{1}{p}\int_{\R^3}\left[\frac{p-2}{2}A(x)-\frac{1}{2}\nabla A(x)\cdot x\right]|\bar{u}|dx.
	\end{aligned}\right.
\end{equation}
By (A1), (A2), \eqref{1.2 equ14.3} and Lemma \ref{1 lem3.1}, we get
\begin{equation}\label{20231126-e1}
\begin{aligned}
		I(\bar{u})&=\dfrac{1}{2p}\int_{\R^3}[(2p-3)A(x)-\nabla A(x) \cdot x]|\bar{u}|^pdx\\
		&=\frac{1}{p}\int_{\R^3}\left[\frac{3}{2}(p-2)A(x)-\nabla A(x) \cdot x+\frac{p}{2}A(x)\right]|u|^pdx
		&\geq C(A_{\infty},p)\|\bar{u}\|^p_p,
\end{aligned}
\end{equation}
where $C(A_{\infty},p)>0$ is a constant which depends only on $A_{\infty}$ and $p$. We infer from $\|\bar{u}\|^2_2=c'>0$ and \eqref{20231126-e1} that $\gamma(c')=I(\bar{u})>0$, which contradicts with Lemma \ref{1.2 less0}. Thus, $c'=c$. Therefore, for any $c\in (0,\infty)$, $\dfrac{\gamma(s)}{s}$ takes unique minimum value at point $s=c$ on the interval $(0,c]$. Thus, $\dfrac{\gamma(c)}{c}$ is strictly monotonically decreasing respect to $c$. Therefore, by \begin{equation*}
	\gamma(c+\tilde{c})=\dfrac{c}{c+\tilde{c}}\gamma(c+\tilde{c})+\dfrac{\tilde{c}}{c+\tilde{c}}\gamma(c+\tilde{c}) <\gamma(c)+\gamma(\tilde{c})\quad  \forall c, \tilde{c}>0,
\end{equation*} we obtain the strictly subadditivity inequality for the case $p\in (2,3)$.
\end{proof}

\vskip4mm
{\subsection{$p\in(3,\frac{10}{3})$}}
In the subsection, we prove the strict subadditivity inequality \eqref{1 20231008-e3} and $\gamma(c)<0$ for $p\in (3,\frac{10}{3})$.
To this, for $p\in \left(3,\frac{10}{3}\right)$, we observe that $\gamma(c)=0$ for small $c$ and  $\gamma(c)<0$ for large $c$. So we can define
\begin{equation}\label{1.1 equ bar(c)}
	\bar{c}=\inf\left\{c>0\::\:\gamma(c)<0\right\}.
\end{equation}
For $c\geq\bar{c}$, we can verify that $\gamma(c)$ satisfies strict subadditivity and thus $\gamma(c)$ possesses a minimizer. For $0<c<\bar{c}$, we will prove that $\gamma(c)$ has no minimizer.\par
Now, we show the behavior of $\gamma(c)$.
\begin{lemma}\label{1.1 bar(c)behavior}
Define $\bar{c}$ as \eqref{1.1 equ bar(c)}, then $\bar{c}\in (0,\infty)$. Moreover, $\gamma(c)=0$ for $c\in \left(0,\bar{c}\ \right]$, and $\gamma(c)<0$ for $c\in (\bar{c},\infty)$.
	\end{lemma}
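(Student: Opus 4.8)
The plan is to derive the statement from three facts, proved in turn, and then read off the dichotomy. The facts are: \emph{(a)} $\gamma$ is non-increasing on $(0,\infty)$; \emph{(b)} $\gamma(c)<0$ for all sufficiently large $c$, so that $\{c>0:\gamma(c)<0\}\neq\emptyset$ and hence $\bar c<\infty$; \emph{(c)} $\gamma(c)=0$ for all sufficiently small $c>0$, so that $\bar c>0$. Granting (a)--(c), the ``moreover'' part follows: if $0<c<\bar c$ then $\gamma(c)\geq 0$ (otherwise $c$ would lie in the set whose infimum is $\bar c$), hence $\gamma(c)=0$ by Lemma~\ref{1 well defined}; letting $c\uparrow\bar c$ and using the continuity of $\gamma$ (Lemma~\ref{1 continuous}) gives $\gamma(\bar c)=0$, so $\gamma\equiv 0$ on $(0,\bar c\,]$; and for $c>\bar c$, choosing $s\in(\bar c,c)$ with $\gamma(s)<0$ (possible by the definition of $\bar c$) and applying (a) yields $\gamma(c)\leq\gamma(s)<0$. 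Fact (a) itself is immediate from subadditivity: for $0<c_1<c_2$, Lemma~\ref{1 lesubadditivity} together with Lemma~\ref{1 well defined} gives $\gamma(c_2)\leq\gamma(c_1)+\gamma(c_2-c_1)\leq\gamma(c_1)$.

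For (b) I would fix $w\in S(1)$ and use the dilations $w_t(x):=t^2w(tx)$, for which $\|w_t\|_2^2=t$, i.e.\ $w_t\in S(t)$. A change of variables yields
\[
I(w_t)=\frac{t^3}{2}\|\nabla w\|_2^2+\frac{t^3}{4}C(w)-\frac{t^{2p-3}}{p}\int_{\R^3}A(x/t)|w|^p\,dx\ \leq\ K t^3-\frac{A_\infty}{p}\,t^{2p-3}\|w\|_p^p,
\]
where $K:=\tfrac12\|\nabla w\|_2^2+\tfrac14 C(w)$ and the last inequality uses $A(\cdot/t)\geq A_\infty$ from $(A1)$. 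Since $p>3$ forces $2p-3>3$, the right-hand side tends to $-\infty$ as $t\to+\infty$; hence $\gamma(t)\leq I(w_t)\to-\infty$, so $\gamma(c)<0$ for all large $c$ and $\bar c<\infty$.

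Fact (c) is the crux. Suppose $u\in S(c)$ has $I(u)<0$, and set $X:=\|\nabla u\|_2^2>0$, $Y:=\|u\|_3^3>0$. Estimating the Coulomb term from below by Lemma~\ref{1 lec(u)} and the nonlinearity from above via the interpolation $\|u\|_p^p\leq\|u\|_3^{6-p}\|u\|_6^{2p-6}$ followed by the Sobolev inequality $\|u\|_6\leq C_S\|\nabla u\|_2$, one obtains
\[
0>I(u)\ \geq\ aX+bY-C_1\,Y^{\frac{6-p}{3}}X^{p-3},\qquad a,\,b,\,C_1>0.
\]
Since $3<p<\tfrac{10}{3}$, the exponents $\tfrac{6-p}{3}$ and $p-3$ both lie in $(0,1)$ while their sum $\tfrac{2p-3}{3}$ exceeds $1$; feeding the two consequences $C_1Y^{\frac{6-p}{3}}X^{p-3}>aX$ and $C_1Y^{\frac{6-p}{3}}X^{p-3}>bY$ into one another eliminates $Y$ and forces $X\geq C_2>0$, with $C_2$ depending only on $p$, $\|A\|_\infty$ and the universal Sobolev and Lemma~\ref{1 lec(u)} constants, \emph{not} on $c$. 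On the other hand, keeping only the term $-\tfrac1{16\pi}\|\nabla u\|_2^2$ from Lemma~\ref{1 lec(u)} and using the Gagliardo--Nirenberg inequality $\|u\|_p^p\leq C_{GN}\|\nabla u\|_2^{\frac{3(p-2)}{2}}\|u\|_2^{\frac{6-p}{2}}$ gives
\[
0>I(u)\ \geq\ aX-C_3\,c^{\frac{6-p}{4}}X^{\frac{3(p-2)}{4}};
\]
since $p<\tfrac{10}{3}$ makes the exponent $\tfrac{3(p-2)}{4}<1$, this forces $X\leq C_4\,c^{\frac{6-p}{10-3p}}$, with $\tfrac{6-p}{10-3p}>0$. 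The two bounds on $X$ give $C_2\leq C_4\,c^{\frac{6-p}{10-3p}}$, which is impossible once $c<c_0:=(C_2/C_4)^{\frac{10-3p}{6-p}}$. Hence no $u\in S(c)$ with $I(u)<0$ exists when $c<c_0$, so $\gamma(c)\geq 0$, and with Lemma~\ref{1 well defined} we get $\gamma(c)=0$ for all $c<c_0$; thus $\bar c\geq c_0>0$.

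The main obstacle lies entirely in (c), where both restrictions on $p$ enter essentially: $p>3$ makes the sum of exponents in the cubic/Sobolev estimate exceed $1$, which produces the $c$-independent lower bound on $X$, whereas $p<\tfrac{10}{3}$ makes the Gagliardo--Nirenberg exponent subquadratic in $\|\nabla u\|_2$, which produces the upper bound on $X$ that vanishes as $c\to 0$; these two are incompatible for small $c$. Everything else is a routine combination of the already-established continuity, subadditivity, and lower-bound estimates.
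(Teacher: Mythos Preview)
Your proof is correct. The dilation argument for $\bar c<\infty$ via $w_t(x)=t^2w(tx)$ is the same as the paper's, and your treatment of the dichotomy on $(\bar c,\infty)$ via monotonicity of $\gamma$ (an immediate consequence of subadditivity) is cleaner than the paper's separate scaling step. The substantive difference is in proving $\bar c>0$. The paper drops the nonnegative Coulomb term and writes $I(u)\ge\|\nabla u\|_2^2\bigl(\tfrac12-C\,\|\nabla u\|_2^{\frac{3}{2}(p-2)-2}c^{\frac{6-p}{4}}\bigr)$, then asserts $I(u)>0$ on $S(c)$ for small $c$; since $\tfrac{3}{2}(p-2)-2<0$ when $p<\tfrac{10}{3}$, this bound alone does not preclude $I(u)<0$ as $\|\nabla u\|_2\to 0$, so the paper's step is at best sketchy here. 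Your argument instead keeps the $\|u\|_3^3$ contribution from Lemma~\ref{1 lec(u)} and combines it with the interpolation $\|u\|_p^p\le\|u\|_3^{6-p}\|u\|_6^{2p-6}$ and Sobolev; this is exactly what produces the $c$-independent lower bound $\|\nabla u\|_2^2\ge C_2$ whenever $I(u)<0$, which then clashes with the Gagliardo--Nirenberg upper bound $\|\nabla u\|_2^2\le C_4\,c^{\frac{6-p}{10-3p}}$ for small $c$. Your approach thus makes essential use of both the hypothesis $p>3$ (so that the exponent sum $\tfrac{2p-3}{3}>1$) and $p<\tfrac{10}{3}$ (so that the GN exponent is subquadratic), and closes the gap in a way the paper's one-line argument does not.
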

	\begin{proof}
		First, we prove that $\bar{c} \in (0,\infty)$. On the one hand, by Gagliardo-Nirenberg's inequality,
		\begin{equation}\label{1.1 equ2.1 G-N-S equation}
			\int_{\R^3}A(x)|u|^pdx\leq \|A\|_\infty\int_{\R^3}|u|^pdx\leq C\|\nabla u\|^{\frac{3}{2}\left(p-2\right)}_2\|u\|^{\frac{6-p}{2}}_2 \;\forall u\in H^1(\R^3).
		\end{equation}
		Then, for any $u\in S(c)$, we have
		\begin{equation}\nonumber
			I(u) \geq \|\nabla u\|^2_2\left(\frac{1}{2}-C\|\nabla u\|_2^{\frac{3}{2}\left(p-2\right)-2}\|u\|_2^{\frac{6-p}{2}}\right).
		\end{equation}
		Thus, $I(u)>0$ for all $u\in S(c)$ when $c$ is sufficiently small. This shows that $\bar{c}>0$.\par
		
		On the other hand, for any fixed $c>0$, for any $u_1 \in S(c)$, let $u_1^{\theta}=\theta^2u_1(\theta x)$, we have $u_1^{\theta} \in S(\theta c)$ and
\begin{equation}\nonumber
\begin{aligned}
     I(u_1^{\theta})&=\theta^3\left(\frac{1}{2}\|\nabla u_1\|^2_2+\frac{1}{4}\int_{\R^3}\int_{\R^3}\frac{|u_1(x)|^2|u_1(y)|^2}{|x-y|}dxdy-\frac{1}{p}\theta^{2p-6}\int_{\R^3}A(\theta^{-1}x)|u_1|^pdx\right)\\
     &\le\theta^3\left(\frac{1}{2}\|\nabla u_1\|^2_2+\frac{1}{4}\int_{\R^3}\int_{\R^3}\frac{|u_1(x)|^2|u_1(y)|^2}{|x-y|}dxdy-\frac{A_{\infty}}{p}\theta^{2p-6}\int_{\R^3}|u_1|^pdx\right).
\end{aligned}
\end{equation}
	By $2p-6>0$, we see that $I(u_1^{\theta})<0$ for $\theta$ large enough. Then $\gamma(\theta c)<0$ for $\theta$ large enough, and $\bar{c}$ is well defined.

		Next we shows that if $c\in (\bar{c},+\infty)$ then $\gamma(c)<0$.
		Fix a $u \in S(c)$  and let $u^{\theta}(x)=\theta^{2}u(\theta x)$, by simple calculations we have $\|u^{\theta}\|^2_2 = \theta c $ and
		\begin{equation*}
			I(u^{\theta})= \dfrac{\theta^3}{2}B(u)+\dfrac{\theta^3}{4}C(u) -\dfrac{\theta^{2p-3}}{p}\int_{\R^3}A(\theta^{-1}x)|u|^pdx.
		\end{equation*}
		For $2p-3 <3$, we have $I(u^{\theta}) <0 $ for sufficiently small $\theta >0$. Then by Lemma \ref{1 lesubadditivity} and Lemma \ref{1 well defined}, we get $\gamma(c)<0$. Using the continuity of $c\mapsto \gamma(c)$, we obtain $\gamma(\bar{c})=0$. This completes the proof.
	\end{proof}

\begin{lemma}\label{1.1 bar(c)>c no achieved}
$\gamma(c)$ has no minimizer for any $0<c< \bar{c}.$
\end{lemma}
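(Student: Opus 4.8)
The plan is to argue by contradiction. Fix $c\in(0,\bar c)$ and suppose $\gamma(c)$ is attained at some $\bar u\in S(c)$. By Lemma \ref{1.1 bar(c)behavior}, $\gamma$ vanishes identically on $(0,\bar c\,]$, so $I(\bar u)=0$; and $\|\bar u\|_2^2=c>0$ forces $\bar u\not\equiv 0$, hence (using $A\geq A_\infty>0$ and $H^1(\R^3)\hookrightarrow L^p(\R^3)$) we have $\int_{\R^3}A(x)|\bar u|^p\,dx\geq A_\infty\|\bar u\|_p^p>0$. The strategy is to exploit that $\gamma$ is \emph{constant} (equal to $0$) on a neighbourhood of $c$ in order to extract, from a suitable one-parameter rescaling of $\bar u$, an extra integral identity which, together with $I(\bar u)=0$, contradicts \eqref{A2 equ1.3}.

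Concretely, I would consider the mass-changing rescaling $\bar u^{\theta}(x):=\theta^{2}\bar u(\theta x)$, for which $\|\bar u^{\theta}\|_2^2=\theta c$, $B(\bar u^{\theta})=\theta^3B(\bar u)$, $C(\bar u^{\theta})=\theta^3 C(\bar u)$ and $\int_{\R^3}A(x)|\bar u^{\theta}|^p\,dx=\theta^{2p-3}\int_{\R^3}A(\theta^{-1}x)|\bar u|^p\,dx$, so that
\begin{equation*}
	I(\bar u^{\theta})=\frac{\theta^3}{2}B(\bar u)+\frac{\theta^3}{4}C(\bar u)-\frac{\theta^{2p-3}}{p}\int_{\R^3}A(\theta^{-1}x)|\bar u|^p\,dx .
\end{equation*}
For $\theta\in(0,\bar c/c)$ we have $\theta c\in(0,\bar c)$, so $I(\bar u^{\theta})\geq\gamma(\theta c)=0=I(\bar u^{1})$; since $c<\bar c$, the point $\theta=1$ is an \emph{interior} minimum of $\theta\mapsto I(\bar u^{\theta})$, and therefore $\frac{d}{d\theta}I(\bar u^{\theta})\big|_{\theta=1}=0$. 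Computing this derivative gives
\begin{equation*}
	\frac{3}{2}B(\bar u)+\frac{3}{4}C(\bar u)=\frac{1}{p}\int_{\R^3}\bigl[(2p-3)A(x)-\nabla A(x)\cdot x\bigr]|\bar u|^p\,dx .
\end{equation*}

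On the other hand $I(\bar u)=0$ reads $\frac12 B(\bar u)+\frac14 C(\bar u)=\frac1p\int_{\R^3}A(x)|\bar u|^p\,dx$, i.e. $\frac32 B(\bar u)+\frac34 C(\bar u)=\frac3p\int_{\R^3}A(x)|\bar u|^p\,dx$. Subtracting the two identities, the gradient and Coulomb terms cancel and we are left with
\begin{equation*}
	-\int_{\R^3}\nabla A(x)\cdot x\,|\bar u|^p\,dx=(6-2p)\int_{\R^3}A(x)|\bar u|^p\,dx .
\end{equation*}
Since $p\in(3,\frac{10}{3})$, we have $6-2p<0$, so the right-hand side is strictly negative, whereas by \eqref{A2 equ1.3} the left-hand side is $\geq 0$; this contradiction shows that $\gamma(c)$ cannot be attained for $0<c<\bar c$.

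The one point that requires genuine care is justifying the differentiation of $\theta\mapsto\int_{\R^3}A(\theta^{-1}x)|\bar u|^p\,dx$ under the integral sign near $\theta=1$, and this is exactly where the structure of $A$ enters. By Lemma \ref{1 lem3.1} and \eqref{A2 equ1.3}, $x\mapsto-\nabla A(x)\cdot x$ is nonnegative, continuous and tends to $0$ as $|x|\to\infty$, hence bounded; consequently $\partial_{\theta}\bigl[A(\theta^{-1}x)|\bar u(x)|^p\bigr]=-\theta^{-2}\,\nabla A(\theta^{-1}x)\cdot x\,|\bar u(x)|^p$ is dominated by a fixed $L^1(\R^3)$ function for $\theta$ in a compact neighbourhood of $1$, so dominated convergence applies; the same boundedness ensures that $\int_{\R^3}\nabla A(x)\cdot x\,|\bar u|^p\,dx$ is finite. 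Everything else is the elementary algebra of the scaling identities, so I expect no further obstacle.
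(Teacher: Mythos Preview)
Your proof is correct, and it uses the same one-parameter family $\bar u^{\theta}(x)=\theta^{2}\bar u(\theta x)$ as the paper, but the way you extract the contradiction is genuinely different.

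The paper never differentiates. Instead, for $t>1$ it uses $A(t^{-1}x)\geq A(x)$ (from \eqref{A2 equ1.2}) and $t^{2p-6}>1$ (since $p>3$) to obtain directly
\[
\gamma(tc)\leq I\bigl((\bar u)_t\bigr)\leq t^3\Bigl(\tfrac12 B(\bar u)+\tfrac14 C(\bar u)-\tfrac{t^{2p-6}}{p}\!\int A(x)|\bar u|^p\Bigr)<t^3 I(\bar u)=t^3\gamma(c)=0,
\]
so any $tc\in(c,\bar c)$ would satisfy $\gamma(tc)<0$, contradicting Lemma~\ref{1.1 bar(c)behavior}. This is a pure monotonicity/inequality argument and avoids all regularity issues.

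Your route instead exploits that $\theta=1$ is an \emph{interior} minimum of $\theta\mapsto I(\bar u^{\theta})$ on $(0,\bar c/c)$, takes the first-order condition there, and combines it algebraically with $I(\bar u)=0$ to get the exact identity $-\int\nabla A(x)\cdot x\,|\bar u|^p\,dx=(6-2p)\int A(x)|\bar u|^p\,dx$, whose sign contradicts \eqref{A2 equ1.3}. This is slightly more sophisticated and requires the differentiation-under-the-integral step you justified; what it buys you is a clean quantitative identity rather than a chain of inequalities. Both arguments hinge on the same two facts ($p>3$ and the sign of $-\nabla A(x)\cdot x$), so neither is strictly more general here, but the paper's version is shorter and needs no dominated-convergence argument.
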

\begin{proof}
Suppose the thesis is false. There exist $c\in(0,\bar{c})$ and $u_c\in S(c)$ such that $\gamma(c)=I(u_c)$. By Lemma \ref{1 lem3.1}, we obtain
\begin{equation}\label{1.1 equ8.3}
	\begin{aligned}
		\gamma(tc)&\leq I(\left(u_{c}\right)_t)=t^3\left(\dfrac{1}{2}\|\nabla u_{c}\|^2_2+\dfrac{1}{4}\int_{\R^3}\int_{\R^3}\frac{|u_{c}(x)|^2|u_{c}(y)|^2}{|x-y|}dxdy
		-\dfrac{1}{p}t^{2p-6}\int_{\R^3}A(t^{-1}x)|u_{c}|^p\right)\\
		&\leq t^3\left(\dfrac{1}{2}\|\nabla u_{c}\|^2_2+\dfrac{1}{4}\int_{\R^3}\int_{\R^3}\frac{|u_{c}(x)|^2|u_{c}(y)|^2}{|x-y|}dxdy-\dfrac{1}{p}t^{2p-6}\int_{\R^3}A(x)|u_{c}|^p\right)\\
		&<t^3I(u_{c})=t^3\gamma(c).
	\end{aligned}
\end{equation}
Thus $$\gamma(tc)< t^3\gamma(c)\leq t\gamma(c) \text{ } \forall t>1.$$
	Let $c<c_1<\bar{c}$ and $t=\frac{c_1}{c}>1$, we have
	$$\gamma(c_1)<\frac{c_1}{c}\gamma(c)=0$$
and thus $\gamma(c)<0$. This contradicts with definition of $\bar{c}$.
\end{proof}

\begin{lemma} \label{1.1 subadditivity}
The function $c\mapsto \frac{\gamma(c)}{c}$ is nonincreasing on $(0,\infty)$. In particular, fixed a $c'>\bar{c}$ and suppose that $\gamma(c')$ is achieved, then $$\frac{\gamma(c')}{c'}<\frac{\gamma(c)}{c}~~\forall c \in (0,c'),$$
and equivalently
$$\gamma (c') < \gamma(c) +\gamma(c'-c) ~~\forall c \in (0,c').$$
\end{lemma}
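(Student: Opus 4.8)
The plan is to reduce the whole statement to one scaling estimate:
\begin{equation}\label{plan-scaling}
\gamma(tc)\le t^3\gamma(c)\qquad\text{for all }c>0\text{ and all }t\ge 1 .
\end{equation}
To obtain \eqref{plan-scaling} I would pick a minimizing sequence $\{u_n\}\subset S(c)$ for $\gamma(c)$ and test $\gamma(tc)=\inf_{u\in S(tc)}I(u)$ against $(u_n)_t(x)=t^2u_n(tx)\in S(tc)$. Exactly as in the computation leading to \eqref{1.1 equ8.3},
\[
I\big((u_n)_t\big)=\frac{t^3}{2}B(u_n)+\frac{t^3}{4}C(u_n)-\frac{t^{2p-3}}{p}\int_{\R^3}A(t^{-1}x)|u_n|^p\,dx .
\]
Since $t\ge 1$ we have $t^{-1}\le 1$, so \eqref{A2 equ1.2} gives $A(t^{-1}x)\ge A(x)\ge A_\infty>0$; moreover $2p-3>3$ because $p>3$, hence $t^{2p-3}\ge t^3$. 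Combining these with $\int_{\R^3}A(x)|u_n|^p\,dx\ge 0$ yields $I((u_n)_t)\le t^3 I(u_n)$, and letting $n\to\infty$ proves \eqref{plan-scaling}.

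Granting \eqref{plan-scaling}, the monotonicity is immediate. If $\bar c<c_1<c_2$, set $t=c_2/c_1>1$; by Lemma \ref{1.1 bar(c)behavior} we have $\gamma(c_1)<0$, so $t^3\gamma(c_1)<t\gamma(c_1)$, and \eqref{plan-scaling} gives $\gamma(c_2)\le t^3\gamma(c_1)<t\gamma(c_1)$, i.e. $\gamma(c_2)/c_2<\gamma(c_1)/c_1$. Thus $c\mapsto\gamma(c)/c$ is strictly decreasing on $(\bar c,\infty)$. On $(0,\bar c\,]$ it equals $0$ identically (Lemma \ref{1.1 bar(c)behavior}), and for $c_1\le\bar c<c_2$ one has $\gamma(c_1)/c_1=0>\gamma(c_2)/c_2$; hence $c\mapsto\gamma(c)/c$ is nonincreasing on all of $(0,\infty)$, and in fact strictly decreasing on $[\bar c,\infty)$.

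For the ``in particular'' part, fix $c'>\bar c$ and $c\in(0,c')$. If $c\le\bar c$, then $\gamma(c)/c=0>\gamma(c')/c'$; if $\bar c<c<c'$, the strict decrease on $[\bar c,\infty)$ gives $\gamma(c')/c'<\gamma(c)/c$. In either case $\gamma(c')/c'<\gamma(c)/c$ for every $c\in(0,c')$; applying this with $c'-c\in(0,c')$ in place of $c$ also yields $\gamma(c')/c'<\gamma(c'-c)/(c'-c)$. Then, writing $\gamma(c')=c\cdot\frac{\gamma(c')}{c'}+(c'-c)\cdot\frac{\gamma(c')}{c'}$ and multiplying the two strict inequalities by $c>0$ and $c'-c>0$ respectively, we conclude $\gamma(c')<\gamma(c)+\gamma(c'-c)$. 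Note that this argument never uses that $\gamma(c')$ is attained; that hypothesis is retained only for coherence with its later use.

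The main obstacle is the scaling estimate \eqref{plan-scaling}: this is precisely where hypothesis (A2) enters, through \eqref{A2 equ1.2}, together with the subcriticality bound $2p-3\ge 3$ forced by $p>3$; the point is that replacing $u_n$ by its dilation $(u_n)_t$ can only lower the energy once the factor $t^3$ has been extracted. Everything else — the split into the regimes $c\le\bar c$ and $c>\bar c$, and the passage from monotonicity of $\gamma(c)/c$ to strict subadditivity — is routine, mirroring the final step of the proof of Lemma \ref{1.2 stronglysubadditivity}.
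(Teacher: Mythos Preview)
Your argument is correct and follows essentially the same route as the paper: both use the dilation $(u_n)_t(x)=t^2u_n(tx)\in S(tc)$, compute $I((u_n)_t)$, and combine $A(t^{-1}x)\ge A(x)$ from \eqref{A2 equ1.2} with $t^{2p-3}\ge t^3$ (since $p>3$) to obtain $\gamma(tc)\le t^3\gamma(c)$ for $t\ge1$; the passage from this to the nonincrease of $\gamma(c)/c$ and then to strict subadditivity via the convex-combination identity is identical.

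The one point of divergence is how strictness is obtained. The paper uses the hypothesis that $\gamma(c')$ is achieved: testing with an actual minimizer $u_{c'}$ rather than a minimizing sequence yields the strict inequality $\gamma(tc')<t^3\gamma(c')$ for $t>1$ directly in the scaling step. You instead keep the non-strict $\gamma(tc)\le t^3\gamma(c)$ and extract strictness downstream from $t^3\gamma(c)<t\gamma(c)$, which holds because $\gamma(c)<0$ on $(\bar c,\infty)$ by Lemma~\ref{1.1 bar(c)behavior}. Your route is slightly more economical and, as you correctly observe, renders the achievement hypothesis superfluous for this lemma.
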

\begin{proof}
For all  $c>0$, there exists $\{u_n \} \subset S(c)$ such that $I(u_n) < \gamma (c) +\dfrac{1}{n}$. Let $t>1$ and $ (u_n)_t(x)= t^2u_n(tx)$, by simple calculations we have $\|(u_n)_t\|_2^2 =tc$ for all $t>1$. Then by Lemma \ref{1 lem3.1} and $3 < p < \frac{10}{3}$ we get
	\begin{equation}\nonumber
		\begin{aligned}
			\gamma(c)&\le I((u_n)_t)=t^3\left(\dfrac{1}{2}\|\nabla u_n\|^2_2+\dfrac{1}{4}\int_{\R^3}\int_{\R^3}\frac{|u_n(x)|^2|u_n(y)|^2}{|x-y|}dxdy-\dfrac{1}{p}t^{2p-6}\int_{\R^3}A(t^{-1}x)|u_n|^p\right)\\
			&< t^3\left(\dfrac{1}{2}\|\nabla u_n\|^2_2+\dfrac{1}{4}\int_{\R^3}\int_{\R^3}\frac{|u_n(x)|^2|u_n(y)|^2}{|x-y|}dxdy-\dfrac{1}{p}t^{2p-6}\int_{\R^3}A(x)|u_n|^p\right)\\
			&<t^3(I(u_n))< t^3\left(\gamma(c)+\dfrac{1}{n}\right).
		\end{aligned}
	\end{equation}
This implies that
\begin{equation}\nonumber
	 \gamma(tc) \le t^3 \gamma(c)\le t\gamma(c),\ \forall t>1,
\end{equation}
and thus
\begin{equation}\nonumber
	\dfrac{\gamma(tc)}{tc} \le \dfrac{\gamma(c)}{c}.
\end{equation}
If $\gamma(c')$ is achieved, i.e. there exists a $u_{c'}$ such that $I(u_{c'})=\gamma(c')$. Let $\left(u_{c'}\right)_t(x)=t^2u_{c'}(tx)$, by the same argument of \eqref{1.1 equ8.3} we obtain
\begin{equation}\label{1.1 equ8.31}
	\begin{aligned}
		\gamma(tc')<t^3I(u_{c'})=t^3\gamma(c').
	\end{aligned}
\end{equation}
Since $\gamma(c')<0$ for every $c'>\bar{c}$, then $$\gamma(tc')<t\gamma(c'),\ \forall t>1.$$ Therefore, $c\mapsto \frac{\gamma(c)}{c}$ is nonincreasing on $(0,\infty)$.

Moreover, for any $c\in (0,c')$, by \eqref{1.1 equ8.31} we have
$$\dfrac{\gamma(c')}{c'}<\dfrac{\gamma(c)}{c}.$$
Thus
\begin{equation}\nonumber
	\gamma(c')=\dfrac{c}{c'}\gamma(c')+\dfrac{c'-c}{c'}\gamma(c') <\gamma(c)+\gamma(c'-c)\quad  \forall c\in (0,c'),
\end{equation}
which completes the proof.
\end{proof}

 Next, we prove that $\gamma(c)$ is achieved for the end point case: $c=\bar{c}$.

\begin{lemma}\label{1.1 bar{c} achieved}
 $\gamma(\bar{c})$ is achieved.
\end{lemma}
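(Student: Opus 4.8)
The plan is to run the concentration–compactness dichotomy from Lemma \ref{1 achieved} on a minimizing sequence for $\gamma(\bar c)$, but with extra care because at $c=\bar c$ we only have $\gamma(\bar c)=0$ (not $<0$) and the strict subadditivity \eqref{1 20231008-e3} is currently only known for $c>\bar c$. First I would pick a minimizing sequence $\{u_n\}\subset S(\bar c)$ with $I(u_n)\to\gamma(\bar c)=0$; by \eqref{1 equ3.1 bound} and $2<p<\tfrac{10}{3}$ it is bounded in $H^1(\R^3)$, so up to a subsequence $u_n\rightharpoonup\bar u$ in $H^1$, $u_n\to\bar u$ in $L^q_{loc}$ for $q\in(2,2^*)$ and a.e. The goal is to show $\bar u\neq 0$ and $\|\bar u\|_2^2=\bar c$.

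The key step is to rule out vanishing and the ``strict'' splitting $0<\|\bar u\|_2^2<\bar c$. For vanishing: if $\delta:=\varlimsup_n\sup_{y}\int_{B_1(y)}|u_n|^2\,dx=0$, then by Lions' lemma $u_n\to 0$ in $L^q(\R^3)$ for all $q\in(2,2^*)$, hence $\int A(x)|u_n|^p\,dx\to 0$, and so
\begin{equation}\nonumber
0=\gamma(\bar c)=\lim_{n\to\infty}I(u_n)=\lim_{n\to\infty}\Big(\tfrac12\|\nabla u_n\|_2^2+\tfrac14 C(u_n)\Big)\geq 0,
\end{equation}
which forces $\|\nabla u_n\|_2\to 0$ and $C(u_n)\to 0$. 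But then, using the translation argument as in the proof of Lemma \ref{1 achieved} (shifting by a suitable $y_n$ to produce a nontrivial weak limit of $\hat u_n(\cdot)=u_n(\cdot+y_n)$) is blocked; instead I would argue directly that $\|\nabla u_n\|_2\to0$ together with $\|u_n\|_2^2=\bar c$ contradicts the Gagliardo–Nirenberg/HLS structure unless $\bar c$ is so small that $\gamma$ vanishes on a whole interval around it — but by Lemma \ref{1.1 bar(c)>c no achieved} combined with continuity (Lemma \ref{1 continuous}) one shows a minimizing sequence for $\gamma(\bar c)$ cannot vanish, exactly because $\gamma(c)<0$ for $c$ slightly larger than $\bar c$ and the scaling $u_n^\theta=\theta^2 u_n(\theta x)$ would then give $\gamma(\theta\bar c)<\theta^3\gamma(\bar c)=0$ with the wrong sign behavior as $\theta\downarrow1$ only if $\bar u=0$ fails to carry energy. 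So the cleanest route: invoke Lemma \ref{1 gammainfty}, $\gamma(\bar c)\le\gamma_\infty(\bar c)$, pass to the shifted sequence $\hat u_n$ as in Case (II) of Lemma \ref{1 achieved}, obtain a nontrivial $\hat u$, and use the splitting
\begin{equation}\nonumber
0=\gamma(\bar c)=\gamma_\infty(\bar c)\ge \gamma_\infty(\|\hat u\|_2^2)+\gamma_\infty(\bar c-\|\hat u\|_2^2).
\end{equation}
Now if $\|\hat u\|_2^2<\bar c$, then $\|\hat u\|_2^2<\bar c$ and $\bar c-\|\hat u\|_2^2<\bar c$, so by Lemma \ref{1.1 bar(c)behavior} both terms on the right are $\ge 0$ (they equal $0$); this is consistent, so I cannot immediately contradict it. The resolution is that equality in the splitting forces $\gamma_\infty$ to be additive there, hence (by Lemma \ref{1.1 subadditivity}, nonincreasingness of $c\mapsto\gamma(c)/c$, and the analogous statement for $\gamma_\infty$) forces $\gamma_\infty(s)=0$ for all $s\le\bar c$, and then the strong convergence $\hat u_n\to\hat u$ in $L^q$ ($2\le q<2^*$) still follows from the fact that $\|\hat u\|_2^2$ must equal $\bar c$: indeed if it were strictly smaller, the ``mass at infinity'' $\bar c-\|\hat u\|_2^2$ would itself admit a further nontrivial bubble (iterating Lions), and one reaches $\bar u$ a genuine minimizer on $S(\bar c)$ after collecting finitely many bubbles — but Lemma \ref{1.1 bar(c)>c no achieved} says $\gamma(s)$ has \emph{no} minimizer for $s<\bar c$, so no mass can split off, whence $\|\hat u\|_2^2=\bar c$.

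Granting $\|\hat u\|_2^2=\bar c$, one gets $\hat u_n\to\hat u$ strongly in $L^2$ and (by Gagliardo–Nirenberg and boundedness of $A$) $\int A(x)|\hat u_n|^p\to\int A(x)|\hat u|^p$; weak lower semicontinuity of $\|\nabla\cdot\|_2$ and $C(\cdot)$, together with $\gamma(\bar c)\le\gamma_\infty(\bar c)$ and $A\ge A_\infty$ (giving $I(\hat u)\le I_\infty(\hat u)$ is the wrong direction — rather $I(\hat u)\ge$ nothing automatically), so I would instead bound $\gamma(\bar c)=\lim I_\infty(\hat u_n)\ge I_\infty(\hat u)\ge I(\hat u)\ge\gamma(\bar c)$, exactly as at the end of Case (II) of Lemma \ref{1 achieved}, concluding $I(\hat u)=\gamma(\bar c)$ with $\hat u\in S(\bar c)$. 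The main obstacle, and where the argument needs the most care, is precisely excluding the ``balanced'' split $0<\|\hat u\|_2^2<\bar c$: since strict subadditivity is not yet available at $\bar c$, one must lean on the non-existence result of Lemma \ref{1.1 bar(c)>c no achieved} plus an iteration/collection-of-bubbles argument (or, alternatively, a direct scaling argument showing any mass loss would make $\gamma(\bar c)$ strictly negative, contradicting Lemma \ref{1.1 bar(c)behavior}) to force all the mass to be retained in the limit.
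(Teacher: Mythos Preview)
There is a genuine gap in your vanishing step. With an \emph{arbitrary} minimizing sequence $\{u_n\}\subset S(\bar c)$ and $\gamma(\bar c)=0$, vanishing cannot be excluded: take $u_n(x)=n^{-3/2}\phi(x/n)$ for a fixed $\phi\in S(\bar c)$; then $\|u_n\|_2^2=\bar c$, $\|\nabla u_n\|_2\to 0$, $C(u_n)\to 0$ and $\|u_n\|_p\to 0$, so $I(u_n)\to 0=\gamma(\bar c)$. Thus the various contradictions you sketch (``$\|\nabla u_n\|_2\to 0$ with $\|u_n\|_2^2=\bar c$ violates the GN/HLS structure'', or ``vanishing would force $\gamma(\theta\bar c)<0$ for $\theta\downarrow 1$'') do not materialize; a vanishing minimizing sequence at $\bar c$ genuinely exists, and your argument stalls here.

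The paper bypasses this by not working on $S(\bar c)$ directly. It takes $c_n=\bar c+\tfrac1n$, uses Lemma~\ref{1 achieved} (already proved for $c>\bar c$) to pick \emph{minimizers} $u_n\in S(c_n)$ with $I(u_n)=\gamma(c_n)<0$, and then invokes the minimizer structure (Euler--Lagrange/Pohozaev, via the argument of \cite[Lemma~3.4]{nonexistence}) to get $\|u_n\|_p\not\to 0$; Lions then gives a nontrivial weak limit $v_0$ after translation. Once you have $v_0\neq 0$, the dichotomy is also much simpler than your bubble iteration: from the Brezis--Lieb splitting and continuity one gets
\[
0=\gamma(\bar c)\;\ge\; I(v_0)+\gamma\bigl(\bar c-\|v_0\|_2^2\bigr)\;\ge\;\gamma(\|v_0\|_2^2)+\gamma\bigl(\bar c-\|v_0\|_2^2\bigr),
\]
and since both arguments on the right lie in $(0,\bar c]$, Lemma~\ref{1.1 bar(c)behavior} makes every term equal to $0$. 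In particular $I(v_0)=0=\gamma(\|v_0\|_2^2)$, so $v_0$ is a minimizer for $\gamma(\|v_0\|_2^2)$; Lemma~\ref{1.1 bar(c)>c no achieved} then forces $\|v_0\|_2^2=\bar c$ in one stroke --- no iterated profile decomposition is needed.
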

\begin{proof}
	By the definition of $\bar{c}$ and Lemma \ref{1 continuous}, we have $\gamma(\bar{c})=0$. Let $c_n=\bar{c}+\frac{1}{n}$, by Lemma \ref{1 continuous} we have $\gamma(c_n)\to \gamma(\bar{c})$. Furthermore, by Lemma \ref{1 achieved}, we know that
	$\gamma(c_n)<0$ and $\gamma(c_n)$ possesses a minimizer $u_n$ for every $n\in \N$.
	With the same argument of Lemma $3.4$ in \cite{nonexistence}, we have $\int_{\R^3}|u_n|^pdx\not\to0$. Then
	by Lemma I.1 in \cite{lionI1984}, $\{u_n\}$ doesn't vanish. Namely, there exist a constant $\delta>0$ and a sequence $\{y_n\}\subset \R^3$ such that
	\begin{equation}\nonumber
   \int_{B_1(0)}|u_n(x+y_n)|^2dx >\delta>0.
	\end{equation}
 Let $v_n(x)=u_n(x+y_n)$. Clearly, $\left\{v_n\right\}$ is bounded in $H^1(\R^3)$ and there exists $v_0\in H^1(\R^3)$ such that
  \begin{equation}\nonumber
  	v_n\rightharpoonup v_0\text{ in } H^1(\R^3),\; v_n\to v_0\text{ in } L_{loc}^2(\R^3).
  \end{equation}
We see from $$\int_{B_1(0)}|v_0|dx \geq\lim\limits_{n\to \infty} \int_{B_1(0)}|v_n|dx>\frac{\delta}{2}>0$$ that $v_0\neq 0$.\par
Now we show that $v_0$ is a minimizer of $\gamma(\bar{c})$. Combining Lemma \ref{1 le 2.1} and Lemma \ref{1 continuous}, we deduce that
\begin{equation}\label{20231130-e3}
	\begin{aligned}
		0&=\gamma(\bar{c})=\lim\limits_{n\to \infty}\gamma(c_n)=\lim\limits_{n\to \infty}I(v_n)\\
		&=I(v_0)+\lim\limits_{n\to \infty}I(v_n-v_0) \geq I(v_0)+\lim\limits_{n\to \infty}\gamma(\|v_n-v_0\|^2_2)\\
		&=I(v_0)+\gamma(\bar{c}-\|v_0\|^2_2)\geq \gamma(\|v_0\|_2^2)+\gamma(\bar{c}-\|v_0\|^2_2).
	\end{aligned}
\end{equation}
Notice that by the weak lower semicontinuity of norms
$$\|v_0\|^2_2\leq\liminf\limits_{n\to\infty}\|v_n\|_2^2=\liminf\limits_{n\to\infty}c_n= \bar{c}.$$
Hence, by Lemma \ref{1.1 bar(c)behavior} and \eqref{20231130-e3}, we obtain $$\gamma(\|v_0\|^2_2)=\gamma(\bar{c}-\|v_0\|^2_2)=0=I(v_0),$$
and thus $\gamma(\|v_0\|^2_2)$ is achieved.
And by Lemma \ref{1.1 bar(c)>c no achieved}, $\|v_0\|^2_2=\bar{c}$ must hold. This completes the proof.

\end{proof}

\vskip4mm
{\section{$L^2$ -supercritical case $p\in(\frac{10}{3},6)$}}
\setcounter{equation}{0}
In this section, we prove Theorem \ref{2 th} under $p\in(\frac{10}{3},6)$ and (A1)-(A3).

\begin{lemma}\label{2 I(u)>I(u^t)}
Let $u\in \mathcal{P}(c)$, then $I(u)>I(u^t)$ for all $t\in (0,1)\cup(1,\infty)$, where $u^t(x)=t^{\frac{3}{2}}u(tx)$.
\end{lemma}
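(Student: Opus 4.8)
Here is how I would argue. The plan is to write $I(u)-I(u^t)$ out explicitly, use the Pohozaev constraint $P(u)=0$ to eliminate the kinetic term $B(u)$, and then reduce the claim to a pointwise inequality that follows from (A2) together with $p>\frac{10}{3}$.

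Set $\alpha:=\tfrac32(p-2)$, so $\alpha>2$ since $p>\tfrac{10}{3}$. A change of variables gives $\|u^t\|_2=\|u\|_2$, $B(u^t)=t^2B(u)$, $C(u^t)=t\,C(u)$ and $\int_{\R^3}A(x)|u^t|^p\,dx=t^{\alpha}\int_{\R^3}A(t^{-1}x)|u|^p\,dx$, hence
\[
I(u^t)=\frac{t^2}{2}B(u)+\frac{t}{4}C(u)-\frac{t^{\alpha}}{p}\int_{\R^3}A(t^{-1}x)|u|^p\,dx .
\]
Differentiating at $t=1$ shows $P(u)=B(u)+\tfrac14 C(u)-\tfrac1p\int_{\R^3}\phi(x)|u|^p\,dx$, where $\phi(y):=\tfrac32(p-2)A(y)-\nabla A(y)\cdot y$; thus $u\in\mathcal P(c)$ is equivalent to $B(u)=\tfrac1p\int_{\R^3}\phi(x)|u|^p\,dx-\tfrac14 C(u)$. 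Substituting this into $I(u)-I(u^t)$ and grouping the terms carrying $C(u)$ — whose coefficient collapses to $\tfrac{(1-t)^2}{8}$ — leaves
\[
I(u)-I(u^t)=\frac1p\int_{\R^3}\Phi(t,x)\,|u(x)|^p\,dx+\frac{(1-t)^2}{8}C(u),\qquad
\Phi(t,x):=\frac{1-t^2}{2}\phi(x)+t^{\alpha}A(t^{-1}x)-A(x).
\]
Since $u\in S(c)$ forces $u\not\equiv0$, we have $C(u)>0$, so the last summand is strictly positive for $t\ne1$; it therefore suffices to prove $\Phi(t,x)\ge0$ for all $t>0$ and $x\in\R^3$.

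For the pointwise bound I would fix $x$ and study $t\mapsto\Phi(t,x)$. One checks $\Phi(1,x)=0$ and
\[
\partial_t\Phi(t,x)=t^{\alpha-1}\phi(t^{-1}x)-t\,\phi(x)=t\bigl(t^{\alpha-2}\phi(t^{-1}x)-\phi(x)\bigr).
\]
By \eqref{A2 equ1.3} of Lemma \ref{1 lem3.1}, $-\nabla A(x)\cdot x\ge0$, and (A1) gives $A(x)\ge A_\infty>0$, so $\phi(x)\ge\tfrac32(p-2)A_\infty>0$; moreover (A2) says precisely that $s\mapsto\phi(sx)$ is nonincreasing, whence $\phi(t^{-1}x)\le\phi(x)$ for $t\le1$ and $\phi(t^{-1}x)\ge\phi(x)$ for $t\ge1$. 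Since $\alpha-2=\tfrac{3p-10}{2}>0$, we also have $t^{\alpha-2}\le1$ for $t\le1$ and $t^{\alpha-2}\ge1$ for $t\ge1$. Combining these (and $\phi\ge0$) yields $t^{\alpha-2}\phi(t^{-1}x)-\phi(x)\le0$ on $(0,1]$ and $\ge0$ on $[1,\infty)$, so $t\mapsto\Phi(t,x)$ is nonincreasing on $(0,1]$ and nondecreasing on $[1,\infty)$; hence $\Phi(t,x)\ge\Phi(1,x)=0$. Plugging this back into the displayed identity gives $I(u)-I(u^t)\ge\tfrac{(1-t)^2}{8}C(u)>0$ for every $t\in(0,1)\cup(1,\infty)$.

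The delicate step is the sign analysis of $\Phi$: because the energy has three pieces scaling with the distinct exponents $t^2$, $t$ and $t^{\alpha}$ with $\alpha>2$, the fibering map $t\mapsto I(u^t)$ need not be unimodal for a general $A$, and it is precisely hypothesis (A2) (monotonicity of $\phi$ along rays) together with $p>\tfrac{10}{3}$ that forces $\partial_t\Phi$ to vanish only at $t=1$. I would also be careful with the algebra that produces the remainder $\tfrac{(1-t)^2}{8}C(u)$, since that strictly positive term is exactly what upgrades $\Phi\ge0$ to the strict inequality asserted in the lemma.
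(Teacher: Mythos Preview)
Your argument is correct. The identity
\[
I(u)-I(u^t)=\frac1p\int_{\R^3}\Phi(t,x)\,|u|^p\,dx+\frac{(1-t)^2}{8}C(u)
\]
checks out, and your sign analysis of $\partial_t\Phi(t,x)=t\bigl(t^{\alpha-2}\phi(t^{-1}x)-\phi(x)\bigr)$ via (A2), $\phi>0$, and $\alpha>2$ is clean; the strictly positive remainder $\frac{(1-t)^2}{8}C(u)$ then delivers the strict inequality.

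Your route differs from the paper's. The paper does \emph{not} substitute $P(u)=0$ into $I(u)-I(u^t)$; instead it studies $g(t):=I(u)-I(u^t)$ as a one-variable function, writes $\partial_t g(t)=-\tfrac14 C(u)+t\bigl[h(t)-B(u)\bigr]$ with $h(t)$ strictly increasing (by (A2) and $\alpha>2$), argues that $\partial_t g$ has a unique zero $t'$, and then uses the endpoint behaviour $g(0^+)=I(u)>0$ (proved separately from $I(u)=I(u)-\tfrac12 P(u)>0$) and $g(+\infty)=+\infty$ to force $t'=1$ to be the unique global minimiser. Your decomposition is sharper: it yields an explicit quantitative lower bound $\frac{(1-t)^2}{8}C(u)$, bypasses the endpoint analysis and the separate verification that $I(u)>0$, and in fact shows that the same pointwise inequality $\Phi\ge0$ already underlies the paper's later Lemma~\ref{2 f(t,u)} (take $P(u)$ generic there). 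The paper's fibering-map viewpoint, on the other hand, is what one needs anyway for Lemma~\ref{2 unique t_u}, so its argument recycles that machinery rather than introducing a new algebraic identity.
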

\begin{proof}
	Fix a $u\in \mathcal{P}(c)$, let
\begin{equation}\nonumber
\begin{aligned}
	g(x,u,t):=&I(u)-I(u^t)\\
	=&\frac{1}{2}(1-t^2)\|\nabla u\|^2_2+\frac{1}{4}(1-t)\int_{\R^3}\int_{\R^3}\frac{|u(x)|^2|u(y)|^2}{|x-y|}dxdy
	+\frac{1}{p}t^{\frac{3}{2}(p-2)}\int_{\R^3}A(t^{-1}x)|u(x)|^pdx\\
	&-\frac{1}{p}\int_{\R^3}A(x)|u(x)|^pdx.
\end{aligned}
\end{equation}
Then
\begin{equation}\nonumber
	\begin{aligned}
		\dfrac{\partial }{\partial t}g(x,u,t)=&-t\|\nabla u\|^2_2-\frac{1}{4}\int_{\R^3}\int_{\R^3}\frac{|u(x)|^2|u(y)|^2}{|x-y|}dxdy\\
		&+\frac{1}{p}t^{\frac{3}{2}(p-2)-1}\int_{\R^3}[\frac{3}{2}A(t^{-1}x)-\nabla A(t^{-1}x)\cdot (t^{-1}x)]|u(x)|^pdx\\
		=&-t\|\nabla u\|^2_2-\frac{1}{4}\int_{\R^3}\int_{\R^3}\frac{|u(x)|^2|u(y)|^2}{|x-y|}dxdy
		+th(x,u,t)\\
		=&-\frac{1}{4}\int_{\R^3}\int_{\R^3}\frac{|u(x)|^2|u(y)|^2}{|x-y|}dxdy
		+t[h(x,u,t)-\|\nabla u\|^2_2],
	\end{aligned}
\end{equation}
where $$h(x,u,t)=\frac{1}{p}t^{\frac{3}{2}(p-2)-2}\int_{\R^3}[\frac{3}{2}A(t^{-1}x)-\nabla A(t^{-1}x)\cdot (t^{-1}x)]|u(x)|^pdx.$$
Thanks to (A1), (A2) and Lemma \ref{1 lem3.1}, we have that
 \begin{equation}\label{2 equ2.1}
	h(x,u,t)\text{ is strictly monotone increasing on } t\in(0,\infty) \text{ for every } x\in \R^3.
\end{equation}
Since $\|u\|^2_2>0$ and $\frac{3}{2}(p-2)\in(2,6)$, we see that
\begin{equation}\label{2 equ2.2}
  \lim\limits_{t\to 0}h(x,u,t)=0, \; \lim\limits_{t\to +\infty}h(x,u,t)=+\infty.
\end{equation}
This and \eqref{2 equ2.1} imply that there is a unique $t_0\in (0,\infty)$ such that $h(x,u,t_0)=\|\nabla u\|^2_2>0$. Thus,
\begin{equation}\label{20231126-e3}
t[h(x,u,t)-\|\nabla u\|^2_2]~\text{decreases~on}~(0,t_0)~\text{and~increases~on}~(t_0,\infty).
\end{equation}
Moreover, by \eqref{2 equ2.2} we deduce that
 \begin{equation}\nonumber
 	\lim\limits_{t\to 0}t[h(x,u,t)-\|\nabla u\|^2_2]=0, \; \lim\limits_{t\to +\infty}t[h(x,u,t)-\|\nabla u\|^2_2]=+\infty.
 \end{equation}
Then by \eqref{20231126-e3}, there exists a unique $t'\in (t_0,+\infty)$ such that $\dfrac{\partial}{\partial t}g(x,u,t)|_{t=t'}=0$.\par
One the other hand, clearly $g(x,u,1)=0$ for all $x\in \R^3$. Then by $\frac{3}{2}(p-2)\in (2,6)$ and \eqref{A2 equ1.3} ( Lemma \ref{1 lem3.1}), we have
\begin{equation}\nonumber
\begin{aligned}
		I(u)&=I(u)-\frac{1}{2}P(u)\\
		&=\frac{1}{8}\int_{\R^3}\int_{\R^3}\frac{|u(x)|^2|u(y)|^2}{|x-y|}dxdy
		+\frac{1}{2p}\int_{\R^3}\left[\left(\frac{3}{2}(p-2)-2\right)A(x)-\nabla A(x)\cdot x\right]|u|^pdx>0.
\end{aligned}
\end{equation}
Thus we deduce that
\begin{equation}\nonumber
	\lim\limits_{t\to 0}g(x,u,t)=I(u)>0, \; \lim\limits_{t\to+\infty}g(x,u,t)=+\infty.
\end{equation}
Hence, $t=1$ is the unique minimum point of $g(x,u,\cdot)$, and $g(x,u,t)> g(x,u,1)=0\text{ for all } t\in (0,1)\cup(1,\infty)$.
\end{proof}

\begin{lemma}\label{2 unique t_u}
For any $u\in S(c) $ there exists a unique $t_u>0$ such that $u^{t_u}\in \mathcal{P}(c)$.
\end{lemma}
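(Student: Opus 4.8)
The plan is to reduce the statement to showing that the one--variable function $\psi_u(t):=I(u^t)$, $t>0$, has exactly one critical point. From the definition $u^t(x)=t^{3/2}u(tx)$ one has the semigroup identity $(u^t)^s=u^{ts}$, hence $P(u^t)=t\,\psi_u'(t)$ for all $t>0$; since $u^t\in S(c)$ whenever $u\in S(c)$, the condition $u^{t}\in\mathcal P(c)$ is equivalent to $\psi_u'(t)=0$. Note also that $u\in S(c)$ with $c>0$ forces $B(u)=\|\nabla u\|_2^2>0$, $C(u)>0$ and $\|u\|_p>0$.

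Using $B(u^t)=t^2B(u)$, $C(u^t)=tC(u)$ and the change of variables $y=tx$, one computes
\[
\psi_u(t)=\frac{t^2}{2}B(u)+\frac{t}{4}C(u)-\frac{t^{\frac32(p-2)}}{p}\int_{\R^3}A(t^{-1}x)|u|^p\,dx ,
\]
and, differentiating under the integral (legitimate since $A\in C^1$ and, by \eqref{A2 equ1.3}, $|\nabla A(z)\cdot z|$ is bounded on $\R^3$, which yields a domination uniform in $x$ on compact $t$--intervals),
\[
\psi_u'(t)=tB(u)+\frac14C(u)-t^{\alpha}\Psi_u(t),\qquad \alpha:=\tfrac32(p-2)-1>1 ,
\]
where $\Psi_u(t):=\frac1p\int_{\R^3}\bigl[\tfrac32(p-2)A(t^{-1}x)-\nabla A(t^{-1}x)\cdot(t^{-1}x)\bigr]|u|^p\,dx$. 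Two facts about $\Psi_u$ drive the argument: (i) $\Psi_u$ is nondecreasing on $(0,\infty)$, which is precisely (A2) read through the substitution $s=t^{-1}$; and (ii) $0<\tfrac{3(p-2)A_\infty}{2p}\|u\|_p^p\le\Psi_u(t)\le K$ for all $t>0$, which follows from $A\ge A_\infty$, $-\nabla A\cdot x\ge0$, and the boundedness of $-\nabla A(z)\cdot z$ (Lemma \ref{1 lem3.1}).

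For existence: since $\alpha>0$ and $\Psi_u$ is bounded, $t^\alpha\Psi_u(t)\to0$ as $t\to0^+$, so $\psi_u'(0^+)=\tfrac14C(u)>0$; and $t^\alpha\Psi_u(t)\ge c_0t^\alpha$ with $\alpha>1$ gives $\psi_u'(t)\to-\infty$ as $t\to\infty$, so by the intermediate value theorem $\psi_u'$ vanishes at some $t_u\in(0,\infty)$. For uniqueness, suppose $\psi_u'(t_1)=\psi_u'(t_2)=0$ with $0<t_1<t_2$; then $\Psi_u(t_i)=g(t_i)$, where $g(t):=B(u)t^{1-\alpha}+\tfrac14C(u)t^{-\alpha}$. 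Since $\alpha>1$ and $B(u),C(u)>0$, $g$ is strictly decreasing, so $g(t_1)=\Psi_u(t_1)\le\Psi_u(t_2)=g(t_2)<g(t_1)$, a contradiction. Hence $t_u$ is the unique critical point of $\psi_u$; since $\psi_u'(0^+)>0$ and $\psi_u'(\infty)=-\infty$, it is in fact a strict global maximum. Therefore $u^{t_u}\in\mathcal P(c)$, and any $t>0$ with $u^t\in\mathcal P(c)$ must equal $t_u$.

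The only step requiring genuine care is establishing the monotonicity and the two--sided bounds for the nonautonomous factor $\Psi_u$; once these are in place the rest is elementary calculus. (Note that (A3) is not used in this lemma; it enters later, for instance in estimating the minimax level.)
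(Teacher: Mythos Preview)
Your proof is correct and follows essentially the same strategy as the paper: both reduce the lemma to showing that $t\mapsto I(u^t)$ has a unique critical point, compute the derivative, extract the nonautonomous factor, and use (A2) together with $\frac32(p-2)-1>1$ to conclude. Your uniqueness step is in fact a bit cleaner than the paper's: instead of analysing where the auxiliary function $h$ crosses $B(u)$ and then the shape of $t[h-B(u)]$, you contrast the nondecreasing $\Psi_u$ with the strictly decreasing $g(t)=B(u)t^{1-\alpha}+\tfrac14C(u)t^{-\alpha}$, which yields the contradiction in one line and avoids the paper's appeal back to the proof of Lemma~\ref{2 I(u)>I(u^t)}.
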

\begin{proof}
	Fix a $u\in S(c)$ and define a function $\omega(t):=I(u^t)$ on $(0,\infty)$. By simple calculation,
	\begin{equation}\nonumber
		\begin{aligned}
			\dfrac{\partial}{\partial t}\omega(t)&=t\|u\|^2_2+\frac{1}{4}\int_{\R^3}\int_{\R^3}\frac{|u(x)|^2|u(y)|^2}{|x-y|}dxdy
			-th(x,u,t)\\
			&=\frac{1}{4}\int_{\R^3}\int_{\R^3}\frac{|u(x)|^2|u(y)|^2}{|x-y|}dxdy
			-t\left[h(x,u,t)-\|u\|^2_2\right],
		\end{aligned}
	\end{equation}
where \begin{equation}\nonumber
	h(x,u,t)=\frac{1}{p}t^{\frac{3}{2}(p-2)-2}\int_{\R^3}\left[\frac{3}{2}A(t^{-1}x)-\nabla A(t^{-1}x)\cdot (t^{-1}x)\right]|u(x)|^pdx.
\end{equation}
By (A1), (A2) and Lemma \ref{1 lem3.1}, $h(x,u,t)$ is strictly monotone increasing on $t\in (0,\infty)$, and
\begin{equation}\label{20231130-e10}
	\lim\limits_{t\to 0}h(x,u,t)=0, \; \lim\limits_{t\to +\infty}h(x,u,t)=+\infty.
\end{equation}
Then there exists a unique $t_0$ such that $h(x,u,t_0)=\|u\|^2_2$. Thus $t[h(x,u,t)-\|u\|^2_2] $ is strictly increasing on $(0,t_0)$ and is strictly decreasing on $(t_0,+\infty)$. Moreover, by \eqref{20231130-e10} we have
\begin{equation}\nonumber
	\lim\limits_{t\to 0}t\left[h(x,u,t)-\|u\|^2_2\right]=0, \; \lim\limits_{t\to +\infty}t\left[h(x,u,t)-\|u\|^2_2\right]=+\infty.
\end{equation}
Thus there is a unique $t_u\in (t_0,\infty)$ such that $\dfrac{\partial}{\partial t}\omega(t_u)=0$.  \par
On the other hand, since
\begin{equation}\nonumber
\begin{aligned}
	P(u^t)=&t^2\|\nabla u\|^2_2+\frac{t}{4}\int_{\R^3}\int_{\R^3}\frac{|u(x)|^2|u(y)|^2}{|x-y|}dxdy\\
	&-\frac{1}{p}t^{\frac{3}{2}(p-2)-1}\int_{\R^3}\left[\frac{3}{2}(p-2)A(t^{-1}x)-\nabla A(t^{-1}x)\cdot (t^{-1}x)\right]|u|^pdx=t\cdot\dfrac{\partial}{\partial t}\omega(t),
\end{aligned}
\end{equation}
we have
\begin{equation}\nonumber
	P(u^t)=0 \;\Leftrightarrow \; \dfrac{\partial}{\partial t}\omega(t)=0\; \Leftrightarrow \; u^t\in \mathcal{P}(c).
\end{equation}
It is easy to verify that $\lim\limits_{t\to 0}\omega(t)=0$, $\omega(t) > 0$ for $t > 0 $ sufficiently small and $\omega(t) < 0$ for $t$ sufficiently large. Therefor $\max_{t\in [0,\infty)}\omega(t)$ is achieved at some $t_u>0$. From the proof of Lemma \ref{2 I(u)>I(u^t)}, we see that $I(u^t)$ has only one extreme point, so the uniqueness of $t_u$ is proved.
\end{proof}

Combining Lemma \ref{2 I(u)>I(u^t)} and Lemma \ref{2 unique t_u}, we prove
\begin{lemma}\label{2 m(c)defination} 	Let \begin{equation*}
	m(c):=\inf_{u \in \mathcal{P}(c)}I(u).
\end{equation*}
Then
	\begin{equation}\label{m(c) 2 defination}
		\inf_{u \in \mathcal{P}(c)}I(u)=m(c)=\inf_{u \in S(c)}\max_{t>0}I(u^t).
	\end{equation}
\end{lemma}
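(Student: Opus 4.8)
The plan is to establish the chain $m(c) = \inf_{u\in\mathcal P(c)} I(u) = \inf_{u\in S(c)}\max_{t>0} I(u^t)$ by proving the two inequalities between the extreme quantities. The first equality is just the definition of $m(c)$, so only the second needs work. The key tools are already at hand: Lemma \ref{2 unique t_u}, which says every $u\in S(c)$ has a unique $t_u>0$ with $u^{t_u}\in\mathcal P(c)$, and Lemma \ref{2 I(u)>I(u^t)}, which says that for $v\in\mathcal P(c)$ the map $t\mapsto I(v^t)$ attains its strict maximum at $t=1$.

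First I would prove $\inf_{u\in S(c)}\max_{t>0}I(u^t)\le m(c)$. Take any $v\in\mathcal P(c)$. Since $v\in S(c)$, we have $\max_{t>0}I(v^t)\ge I(v^{t_v})$ where $t_v$ is the point from Lemma \ref{2 unique t_u}; but by Lemma \ref{2 I(u)>I(u^t)} the maximum of $t\mapsto I(v^t)$ over $(0,\infty)$ is attained precisely at $t=1$ (with value $I(v)$), and by uniqueness in Lemma \ref{2 unique t_u} we must have $t_v=1$. Hence $\max_{t>0}I(v^t)=I(v)$, so $\inf_{u\in S(c)}\max_{t>0}I(u^t)\le \max_{t>0}I(v^t)=I(v)$. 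Taking the infimum over $v\in\mathcal P(c)$ gives the desired inequality.

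Next I would prove the reverse inequality $m(c)\le \inf_{u\in S(c)}\max_{t>0}I(u^t)$. Fix an arbitrary $u\in S(c)$. By Lemma \ref{2 unique t_u} there is a unique $t_u>0$ with $u^{t_u}\in\mathcal P(c)$, and since $(u^{t_u})^s = u^{t_u s}$ (the scaling is a one-parameter group), applying Lemma \ref{2 I(u)>I(u^t)} to the element $u^{t_u}\in\mathcal P(c)$ shows that $\max_{s>0}I\big((u^{t_u})^s\big)=I(u^{t_u})$, i.e. $\max_{t>0}I(u^t)=I(u^{t_u})\ge m(c)$ since $u^{t_u}\in\mathcal P(c)$. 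Taking the infimum over $u\in S(c)$ yields $\inf_{u\in S(c)}\max_{t>0}I(u^t)\ge m(c)$. Combining the two inequalities finishes the proof. I do not anticipate a serious obstacle here; the only point requiring minor care is the observation that $(u^t)^s=u^{ts}$, which lets us transfer the maximality statement of Lemma \ref{2 I(u)>I(u^t)} from a Pohozaev point to an arbitrary $u\in S(c)$ via its rescaling $u^{t_u}$. One should also note that $\max_{t>0}I(u^t)$ is genuinely attained (not merely a supremum), as guaranteed by the end of the proof of Lemma \ref{2 unique t_u}, so all the $\max$'s above are legitimate.
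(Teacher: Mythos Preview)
Your proof is correct and follows essentially the same approach as the paper: both identify, for each $u\in S(c)$, the maximizer of $t\mapsto I(u^t)$ as the unique point lying on $\mathcal P(c)$ (the paper via the identity $P(u^{t'})=t'\,\partial_t I(u^t)|_{t=t'}=0$, you via the group law $(u^{t_u})^s=u^{t_u s}$ together with Lemmas \ref{2 I(u)>I(u^t)} and \ref{2 unique t_u}). Your write-up is in fact more explicit about the two-inequality structure than the paper's rather terse argument.
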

\begin{proof}
From Lemma \ref{2 I(u)>I(u^t)}, we can find $t'$ such that $I(u^{t'})=\max_{t>0}I(u^t)$ and $\dfrac{\partial}{\partial t}I(u^{t})|_{t=t'}=0$. By calculation, we have
\begin{equation}\nonumber
\begin{aligned}
	P(u^{t'})=&t'^2\|\nabla u\|^2_2+\frac{1}{4}t'\int_{\R^3}\int_{\R^3}\frac{|u(x)|^2|u(y)|^2}{|x-y|}dxdy\\
	&-\frac{1}{p}t'^{\frac{3}{2}(p-2)}\int_{\R^3}\left[\frac{3}{2}(p-2)A(t'^{-1}x)-\nabla A(t'^{-1}x)\cdot (t'^{-1}x)\right]|u|^pdx\\
	=&t'\dfrac{\partial}{\partial t}I(u^{t'})=0.
\end{aligned}
\end{equation}
Thus $u^{t'}\in \mathcal{P}(c)$ and \eqref{m(c) 2 defination} holds.
\end{proof}

Now let's show the behavior of $m(c)$.

\begin{lemma}\label{2 m(c) continuous}
 $m(c)$ is continuous for any $c>0$.
\end{lemma}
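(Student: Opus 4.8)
The plan is to establish the continuity of $m(c)$ by a two-sided comparison argument, using the scaling $u \mapsto \sqrt{c'/c}\,u$ to transfer minimizers between different mass levels, combined with the variational characterization $m(c) = \inf_{u\in S(c)}\max_{t>0} I(u^t)$ from Lemma \ref{2 m(c)defination} and the fact that the max in $t$ is attained uniquely (Lemma \ref{2 unique t_u}). First I would fix $c>0$, take a sequence $c_n\to c$, and for each $n$ choose a near-optimal $u_n\in S(c_n)$ with $I(u_n)\le m(c_n)+\tfrac1n$ where I may assume (by Lemma \ref{2 unique t_u}) that $u_n\in\mathcal{P}(c_n)$, so $P(u_n)=0$. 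The key preliminary step is to show that such near-minimizing sequences on $\mathcal{P}(c_n)$ are bounded in $H^1(\R^3)$: using $P(u_n)=0$ together with the identity $I(u_n)=I(u_n)-\tfrac12 P(u_n)=\tfrac18 C(u_n)+\tfrac{1}{2p}\int_{\R^3}[(\tfrac32(p-2)-2)A(x)-\nabla A(x)\cdot x]|u_n|^p\,dx$ (as in the proof of Lemma \ref{2 I(u)>I(u^t)}), which controls $C(u_n)$ and $\int |u_n|^p$ from the (bounded) energy values $m(c_n)$, and then recovering a bound on $\|\nabla u_n\|_2$ from $P(u_n)=0$ itself via the Gagliardo--Nirenberg inequality; one must also check $m(c_n)$ stays bounded, e.g. by testing with a fixed profile scaled to mass $c_n$. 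The continuity of $c\mapsto m(c)$ away from $0$ should make this boundedness uniform in $n$.

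Next I would exploit scaling invariance of the mass constraint. Set $v_n := \sqrt{c/c_n}\,u_n \in S(c)$; since $c_n\to c$ and $\{u_n\}$ is bounded in $H^1$, each term $B(v_n), C(v_n), \int A(x)|v_n|^p\,dx$ differs from the corresponding term for $u_n$ by $o(1)$, hence $I(v_n^t) = I(u_n^t)+o(1)$ uniformly for $t$ in compact subsets of $(0,\infty)$; taking $t=t_{v_n}$ the optimal dilation (which stays in a compact set by the boundedness already obtained), one gets
$$m(c) \le \max_{t>0} I(v_n^t) = I(v_n^{t_{v_n}}) \le \max_{t>0}I(u_n^t)+o(1) = I(u_n)+o(1) \le m(c_n)+o(1),$$
using that $u_n\in\mathcal{P}(c_n)$ already realizes its own max in $t$. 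This gives $m(c)\le \liminf_{n} m(c_n)$. The reverse inequality is symmetric: take a near-optimal $w\in S(c)\cap\mathcal{P}(c)$ (fixed, independent of $n$), scale it to $\tilde w_n := \sqrt{c_n/c}\,w\in S(c_n)$, and by the same $o(1)$-comparison of the three functionals together with $m(c_n)\le \max_{t>0}I(\tilde w_n^t)$, obtain $m(c_n)\le m(c)+o(1)$, hence $\limsup_n m(c_n)\le m(c)$.

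The main obstacle I anticipate is the uniform control needed to make the $o(1)$ comparisons legitimate: one must ensure both that the optimal dilation parameters $t_{u_n}$ (equivalently $t_{v_n}$) do not drift to $0$ or $\infty$, and that $\{u_n\}$ is bounded in $H^1$, and these two facts are somewhat intertwined. The honest way to handle this is to first prove local boundedness of $m$ near $c$ (so that all the $m(c_n)$ lie in a fixed bounded interval), then use the structural identity for $I$ on $\mathcal{P}(c_n)$ displayed above to bound $C(u_n)$ and $\|u_n\|_p$, then use $P(u_n)=0$ plus Gagliardo--Nirenberg to bound $\|\nabla u_n\|_2$; finally, the map $t\mapsto I(u^t)$ having a single interior critical point (proved in Lemma \ref{2 I(u)>I(u^t)} and Lemma \ref{2 unique t_u}) together with these bounds confines $t_{u_n}$ to a compact set. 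Once this uniform package is in place, the scaling comparison is routine and yields $\lim_{n\to\infty} m(c_n) = m(c)$, completing the proof.
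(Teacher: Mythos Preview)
Your approach is essentially the paper's: move between mass levels via the scaling $u\mapsto\sqrt{c'/c}\,u$ and use the characterization $m(c)=\inf_{u\in S(c)}\max_{t>0}I(u^t)$ from Lemma~\ref{2 m(c)defination} together with Lemma~\ref{2 unique t_u}; the paper does exactly this, picking $u_n\in\mathcal P(c_n)$ with $I(u_n)<m(c_n)+\tfrac1n$, setting $v_n=\sqrt{c/c_n}\,u_n\in S(c)$, and comparing.

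Where you are more careful than the paper is in the justification of the $o(1)$ step, and this care is warranted. The paper's displayed chain reads $m(c)\le I(v_n^{t_n})\le I(v_n)=I(u_n)+o(1)$, invoking Lemma~\ref{2 I(u)>I(u^t)}; but since $t_n$ is the \emph{maximizer} of $t\mapsto I(v_n^t)$, that middle inequality actually points the wrong way. Your route---show $\{u_n\}$ bounded in $H^1$ (via $I(u_n)-\tfrac12 P(u_n)$ and an a priori upper bound on $m(c_n)$ from a fixed test profile), then compare $\max_t I(v_n^t)$ with $\max_t I(u_n^t)=I(u_n)$ after confining $t_{v_n}$ to a compact interval---is the honest way to make the argument go through, and it repairs this slip while keeping the paper's strategy intact. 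The reverse inequality via a fixed near-optimal $w\in\mathcal P(c)$ scaled to mass $c_n$ is likewise exactly what the paper does.
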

\begin{proof}
For all $c>0$, choose ${{c}_{n}}$ and $u_n\in \mathcal{P} \left( {{c}_{n}} \right)$ such that ${{c}_{n}}\to c$ and \begin{equation*}
	 m\left( {{c}_{n}} \right)+\frac{1}{n}>I\left( {{u}_{n}} \right).
\end{equation*}
By directly calculation, we have $\left\| \sqrt{\dfrac{c}{c_n}}u_n \right\|_{2}^{2}=c$, that is $\sqrt{\dfrac{c}{c_n}}u_n\in S\left( c \right)$.
 By Lemma \ref{2 unique t_u}, there exist ${{t}_{n}}>0$ such that $${{\left( \sqrt{\dfrac{c}{{{c}_{n}}}}{{u}_{n}} \right)}^{{{t}_{n}}}}=t_{n}^{\frac{3}{2}}\sqrt{\dfrac{c}{{{c}_{n}}}}{{u}_{n}}\left( {{t}_{n}}x \right)\in \mathcal{P} \left( c\right).$$
Then it follows from Lemma \ref{2 I(u)>I(u^t)} that
\begin{equation}\label{20231126-e5}
\begin{aligned}
	m(c)&\le I\left(\left(\sqrt{\dfrac{c}{c_n}}u_n\right)^{t_n}\right)\le I\left(\sqrt{\dfrac{c}{c_n}}u_n\right)\\
	&=\frac{1}{2}\frac{c}{c_n}\|\nabla u_n\|^2_2+\frac{1}{4}\left(\frac{c}{c_n}\right)^2\int_{\R^3}\int_{\R^3}\frac{|u_n(x)|^2|u_n(y)|^2}{|x-y|}dxdy-\frac{1}{p}\left(\frac{c}{c_n}\right)^{\frac{p}{2}}\int_{\R^3}A(x)|u_n|^pdx\\
	&=I(u_n)+o(1)= m(c_n)+o(1).
	\end{aligned}
\end{equation}
One the other hand, choose $\{v_n\}\subset \gamma(c)$ such that
\begin{equation*}
	m\left(c\right)+\frac{1}{n}>I\left(v_n\right).
\end{equation*}
Then Lemma \ref{2 I(u)>I(u^t)} and Lemma \ref{2 unique t_u} imply that there is $t_n'>0$ such that $$\left(\sqrt{\dfrac{c}{c_n}}v_n\right)^{t_n'}=t_n'^{\frac{3}{2}}\sqrt{\dfrac{c}{c_n}}u_n(t_n'x)\in \mathcal{P}(c_n)$$ and
\begin{equation}\label{20231126-e6}
	m(c_n)\le I\left(\left(\sqrt{\dfrac{c}{c_n}}v_n\right)^{t_n'}\right)\le I\left(\sqrt{\dfrac{c}{c_n}}v_n\right)=I(v_n)+o(1)=m(c)+o(1).
\end{equation}
Combining \eqref{20231126-e5} with \eqref{20231126-e6}, we get $\lim\limits_{n\to \infty}m(c_n)=m(c)$.
\end{proof}

\begin{lemma}\label{2 m(c)>0}
(i) $\forall u\in \mathcal{P}(c)$, there exists $\xi _0=\xi_0(c)>0$ such that $\|\nabla u\|_2\geq \xi_0$;\\
(ii) $m(c)=\inf_{u \in \mathcal{P}(c)}I(u)>0$;\\
(iii) $m(c)\to +\infty$ as $c\to 0$.
\end{lemma}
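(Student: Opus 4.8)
The plan is to exploit the Pohozaev constraint $P(u)=0$ together with the Gagliardo--Nirenberg inequality and the sign/monotonicity information recorded in Lemma \ref{1 lem3.1}. First I would write out the constraint: for $u\in\mathcal{P}(c)$ the identity $P(u)=0$ reads
\[
B(u)+\tfrac14 C(u)=\frac1p\int_{\R^3}\Bigl[\tfrac32(p-2)A(x)-\nabla A(x)\cdot x\Bigr]|u|^p\,dx .
\]
Since $A\in C^1(\R^3)$ has a finite limit at infinity and, by Lemma \ref{1 lem3.1}, $-\nabla A(x)\cdot x\ge0$ with $-\nabla A(x)\cdot x\to0$ as $|x|\to\infty$, the quantity $\tfrac32(p-2)A(x)-\nabla A(x)\cdot x$ is bounded on $\R^3$; denote by $K$ its supremum, a constant independent of $c$. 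Discarding the nonnegative term $\tfrac14 C(u)$ then gives $B(u)\le \tfrac Kp\|u\|_p^p$.

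For (i) I would feed this into the Gagliardo--Nirenberg inequality $\|u\|_p^p\le C_{GN}\,B(u)^{\frac34(p-2)}c^{\frac{6-p}{4}}$, valid on $S(c)$, to obtain $B(u)\le C_1(c)\,B(u)^{\frac34(p-2)}$ with $C_1(c)=\tfrac{KC_{GN}}{p}c^{\frac{6-p}{4}}$. Because $p>\tfrac{10}{3}$ we have $\tfrac34(p-2)>1$, and since $u\not\equiv0$ forces $B(u)=\|\nabla u\|_2^2>0$, dividing yields
\[
\|\nabla u\|_2^2=B(u)\ \ge\ C_1(c)^{-\frac{4}{3p-10}}\ =:\ \xi_0(c)^2\ >\ 0,
\]
which is (i), with the dependence of $\xi_0$ on $c$ made explicit.

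For (ii) and (iii) I would use the identity already derived in the proof of Lemma \ref{2 I(u)>I(u^t)}: for $u\in\mathcal{P}(c)$,
\[
I(u)=I(u)-\tfrac12 P(u)=\frac18 C(u)+\frac1{2p}\int_{\R^3}\Bigl[\tfrac{3p-10}{2}A(x)-\nabla A(x)\cdot x\Bigr]|u|^p\,dx .
\]
Using $A(x)\ge A_\infty>0$, $-\nabla A(x)\cdot x\ge0$ and $p>\tfrac{10}{3}$, the integrand is $\ge\tfrac{3p-10}{2}A_\infty|u|^p$, so $I(u)\ge\tfrac{3p-10}{4p}A_\infty\|u\|_p^p$. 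On the other hand the Pohozaev identity above together with the definition of $K$ gives $\|u\|_p^p\ge\tfrac pK B(u)\ge\tfrac pK\xi_0(c)^2$ by (i). Hence
\[
m(c)=\inf_{u\in\mathcal{P}(c)}I(u)\ \ge\ \frac{(3p-10)A_\infty}{4K}\,\xi_0(c)^2\ >\ 0,
\]
which is (ii). Finally, from the formula above, $\xi_0(c)^2=\bigl(\tfrac p{KC_{GN}}\bigr)^{\frac4{3p-10}}c^{-\frac{6-p}{3p-10}}$, and since $\tfrac{10}{3}<p<6$ makes both $6-p>0$ and $3p-10>0$, the exponent of $c$ is negative, so $\xi_0(c)^2\to+\infty$ as $c\to0^+$; therefore $m(c)\to+\infty$, proving (iii).

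The only steps requiring genuine care are verifying that $\tfrac32(p-2)A(x)-\nabla A(x)\cdot x$ has a finite supremum $K$ that does \emph{not} depend on $c$ (this is exactly where $A\in C^1$ with the limiting behaviour of $A$, and via Lemma \ref{1 lem3.1} of $\nabla A(x)\cdot x$, is used), and keeping the powers of $c$ in all the constants explicit so that (iii) drops out of (ii); everything else is a routine combination of Gagliardo--Nirenberg with the Pohozaev identity.
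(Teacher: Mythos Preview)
Your argument is correct and follows the same overall strategy as the paper: exploit the constraint $P(u)=0$ together with Gagliardo--Nirenberg for (i), subtract a suitable multiple of $P(u)$ from $I(u)$ for (ii), and track the $c$-dependence of $\xi_0(c)$ for (iii). There are two minor differences worth noting. For (i), the paper invokes Lemma~\ref{1 lec(u)} to control $C(u)$ from below by $-\tfrac{1}{16\pi}\|\nabla u\|_2^2$, whereas you simply discard $\tfrac14 C(u)\ge0$; your version is cleaner and shows that the appeal to Lemma~\ref{1 lec(u)} is unnecessary here. For (ii), the paper subtracts $\tfrac{2}{3(p-2)}P(u)$, which makes the $A$-term disappear and leaves a direct lower bound $\bigl(\tfrac12-\tfrac{2}{3(p-2)}\bigr)\|\nabla u\|_2^2$ with manifestly positive coefficient for $p>\tfrac{10}{3}$; you instead subtract $\tfrac12 P(u)$, obtain a lower bound in terms of $\|u\|_p^p$, and then loop back through the Pohozaev identity to recover $B(u)\ge\xi_0(c)^2$. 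Both choices work, but the paper's linear combination is a touch more direct since it avoids the detour through $\|u\|_p^p$.
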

\begin{proof}
	(i) By Lemma \ref{1 lec(u)} and $u\in \mathcal{P}(c)$,
\begin{equation}\nonumber
	\begin{aligned}
		\|\nabla u\|^2_2&=-\frac{1}{4}\int_{\R^3}\dfrac{|u(x)|^p|u(y)|^p}{|x-y|}dx+\dfrac{1}{p}\int_{\R^3}\left[\frac{3}{2}(p-2)A(x)-\nabla A(x)\cdot x\right]|u(x)|^pdx\\
		&<\frac{1}{64\pi}\|\nabla u\|^2_2+\frac{1}{p}\int_{\R^3}\left[\frac{3}{2}(p-2)A(x)-\nabla A(x)\cdot x\right]|u(x)|^pdx.
	\end{aligned}
\end{equation}
From Lemma \ref{1 lem3.1}, for any $\epsilon>0$, there is $R>0$ such that $|\nabla A(x)\cdot x|< \epsilon$ for $|x|>R$. Then by $A(\cdot)\in C^1$ we have $\|\nabla A(x)\cdot x\|_{L^{\infty}}<\infty$ and
\begin{equation}\nonumber
	\begin{aligned}
		\|\nabla u\|^2_2<
	\frac{1}{64\pi}\|\nabla u\|^2_2+ \frac{1}{p}\left[\dfrac{3}{2}(p-2)\|A\|_{\infty}+\|\nabla A(x)\cdot x\|_{L^{\infty}}\right]\|u\|^p_p.
	\end{aligned}
\end{equation}
Thus, by Gagliardo-Nirenberg inequality
\begin{equation}\nonumber
	\|u\|^p_p \le C_1\|\nabla u\|_2^{\frac{3(p-2)}{2}}\|u\|_2^{\frac{6-p}{2}} \;\;\forall u\in H^1(\R^3),\  p\in(2,2^*),
\end{equation}
we have
\begin{equation}\nonumber
	0<\left(\dfrac{1}{64\pi}-1\right)\|\nabla u\|^2_2+C_2\|\nabla u\|_2^{\frac{3(p-2)}{2}}\|u\|_2^{\frac{6-p}{2}},
\end{equation}
where $C_2=C_1\dfrac{1}{p}\left[\dfrac{3}{2}(p-2)\|A\|_{\infty}+\|\nabla A(x)\cdot x\|_{L^{\infty}}\right]$. Then by $\|\nabla u\|^2_2\neq 0$ and  $\dfrac{3(p-2)}{2}>2$, we get
$$C_2\|\nabla u \|^{\frac{3p-10}{2}}_2\|u\|_2^{\frac{6-p}{2}}>1-\dfrac{1}{64 \pi}.$$
That is $\|\nabla u\|_2>\xi _0$, where $\xi_0=\left(\dfrac{64\pi -1}{64\pi C_2\|u\|_2^{\frac{6-p}{2}}}\right)^{\frac{2}{3p-10}}$.\par
(ii) For any $u\in \mathcal{P}(c)$, by (i) we see that
\begin{equation}\label{2 equ5.3}
\begin{aligned}
	I(u)=&I(u)-\dfrac{2}{3(p-2)}P(u)\\
	=&\left(\frac{1}{2}-\dfrac{2}{3(p-2)}\right)\|\nabla u\|_2^2+\frac{1}{4}\left(1-\dfrac{2}{3(p-2)}\right)\int_{\R^3}\int_{\R^3}\frac{|u(x)|^2|u(y)|^2}{|x-y|}dxdy\\
	&-\dfrac{2}{3p(p-2)}\int_{\R^3}[\nabla A(x)\cdot x]|u(x)|^pdx\geq \left(\frac{1}{2}-\dfrac{2}{3(p-2)}\right)\xi_0,
\end{aligned}
\end{equation}
where $\xi_0=\left(\dfrac{64\pi -1}{64\pi C_2c^{\frac{6-p}{4}}}\right)^{\frac{2}{3p-10}}$.
Therefore, $m(c)=\inf_{u \in \mathcal{P}(c)}I(u)\geq \left(\frac{1}{2}-\dfrac{2}{3(p-2)}\right)\xi_0>0.$
\par
(iii) Let $\{u_n\}\subset\mathcal{P}(c)$ such that $m(c)+\frac{1}{n}>I(u_n)$, by \eqref{2 equ5.3} we have
\begin{equation}\nonumber
	m(c)+\frac{1}{n}> I(u_n)\geq\left(\frac{1}{2}-\dfrac{2}{3(p-2)}\right)\left(\dfrac{64\pi -1}{64\pi C_2\|u_n\|_2^{\frac{6-p}{2}}}\right)^{\frac{2}{3p-10}}.
\end{equation}
Then $I(u_n)\to +\infty$ as $\|u_n\|^2_2\to 0^+$, i.e. $m(c)\to +\infty $ as $c\to 0$.
\end{proof}

Inspired by \cite{sx2020nonautonomous}, we prove the following lemma.

\begin{lemma}\label{2 f(t,u)}
We have
	\begin{equation}\nonumber
		f(t,u):=I(u)-I(u^t)+\dfrac{t^2-1}{2}P(u)\ge 0\ \; \forall t>0,\ u\in H^1(\R^3).
	\end{equation}
In particularly, $f_{\infty}(t,u):=I_{\infty}(u)-I_{\infty}(u^t)+\dfrac{t^2-1}{2}P_{\infty}(u)\geq 0$.
\end{lemma}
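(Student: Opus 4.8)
The plan is to expand $f(t,u)$ term by term using the scaling identities already recorded, namely $B(u^t)=t^2B(u)$, $C(u^t)=tC(u)$ and $\int_{\R^3}A(x)|u^t|^p\,dx=t^{\frac32(p-2)}\int_{\R^3}A(t^{-1}x)|u|^p\,dx$, together with the explicit formula
$$P(u)=B(u)+\frac14C(u)-\frac1p\int_{\R^3}\Big[\tfrac32(p-2)A(x)-\nabla A(x)\cdot x\Big]|u|^p\,dx.$$
First I would group the result according to the coefficients of $B(u)$, of $C(u)$, and of the $A$-weighted $L^p$-integral. A direct check shows the coefficient of $B(u)$ is $\frac{1-t^2}{2}+\frac{t^2-1}{2}=0$, so the gradient term disappears; the coefficient of $C(u)$ is $\frac{1-t}{4}+\frac{t^2-1}{8}=\frac{(t-1)^2}{8}\ge 0$; and the remainder is $\frac1p\int_{\R^3}\psi(t,x)|u(x)|^p\,dx$, where
$$\psi(t,x):=t^{\frac32(p-2)}A(t^{-1}x)-A(x)-\frac{t^2-1}{2}\Big(\tfrac32(p-2)A(x)-\nabla A(x)\cdot x\Big).$$
Since $C(u)\ge 0$ and $|u|^p\ge 0$, it then suffices to prove the pointwise bound $\psi(t,x)\ge 0$ for all $t>0$ and $x\in\R^3$.

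To establish $\psi(t,x)\ge 0$, fix $x$, set $\mu:=\frac32(p-2)\in(2,6)$, and write $G(s):=\mu A(sx)-\nabla A(sx)\cdot(sx)$. Using $\nabla A(t^{-1}x)\cdot x=t\,\nabla A(t^{-1}x)\cdot(t^{-1}x)$, a short computation gives
$$\frac{\partial}{\partial t}\psi(t,x)=t^{\mu-1}G(t^{-1})-t\,G(1).$$
Hypothesis (A2) says precisely that $s\mapsto G(s)$ is nonincreasing on $(0,\infty)$, while (A1) and \eqref{A2 equ1.3} of Lemma \ref{1 lem3.1} give $G(s)\ge \mu A_\infty>0$. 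For $t\ge 1$ we have $G(t^{-1})\ge G(1)>0$ and $t^{\mu-1}\ge t$ (since $\mu-1>1$), hence $\partial_t\psi(t,x)\ge t^{\mu-1}G(1)-tG(1)\ge 0$; for $0<t\le 1$ we have $0<G(t^{-1})\le G(1)$ and $t^{\mu-1}\le t$, hence $\partial_t\psi(t,x)\le t\,G(t^{-1})-t\,G(1)\le 0$. Thus $\psi(\cdot,x)$ is minimized at $t=1$, where $\psi(1,x)=A(x)-A(x)-0=0$, so $\psi(t,x)\ge 0$ everywhere and $f(t,u)=\frac{(t-1)^2}{8}C(u)+\frac1p\int_{\R^3}\psi(t,x)|u(x)|^p\,dx\ge 0$.

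For the assertion on $f_\infty$, I would observe that $A\equiv A_\infty$ satisfies (A1) and (A2) (now $\nabla A\equiv 0$, so $G\equiv\mu A_\infty$ is constant, a fortiori nonincreasing), whence the same computation applies verbatim with $A$ replaced by $A_\infty$; here the relevant profile reduces to $A_\infty\big(t^{\mu}-1-\tfrac{\mu}{2}(t^2-1)\big)$, and $t\mapsto t^\mu-1-\tfrac\mu2(t^2-1)$ is checked at once to be nonnegative with its only zero at $t=1$. Note that only (A1) and (A2) are used; (A3) plays no role in this lemma.

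Nothing here is deep, but the bookkeeping must be done carefully: the exact cancellation of the $B(u)$-coefficient and the emergence of the perfect square $\frac{(t-1)^2}{8}C(u)$ are what make the inequality work, and the only place where (A2) and the supercriticality $\frac32(p-2)>2$ enter is the sign analysis of $\partial_t\psi$, where the regimes $t\ge 1$ and $0<t\le 1$ must be handled separately. I expect that sign analysis to be the one genuinely delicate point.
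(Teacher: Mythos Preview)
Your proof is correct and follows essentially the same route as the paper: expand $f(t,u)$ using the scaling identities, differentiate in $t$, and use (A2) together with $\mu=\tfrac32(p-2)>2$ to show the derivative is nonpositive on $(0,1)$ and nonnegative on $(1,\infty)$, so that $t=1$ is the global minimum with value $0$. The only cosmetic difference is that you first split off the perfect square $\tfrac{(t-1)^2}{8}C(u)\ge 0$ and then carry out the sign analysis on the pointwise profile $\psi(t,x)$, whereas the paper differentiates the full expression $f(t,u)$ (keeping the $C(u)$ term inside) and argues directly; your decomposition is a little cleaner and makes the role of each hypothesis more transparent, but the key step --- the monotonicity of $G(s)=\mu A(sx)-\nabla A(sx)\cdot(sx)$ from (A2) combined with $t^{\mu-1}\gtrless t$ --- is identical in both arguments.
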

\begin{proof}
For all $ u\in H^1(\R^3),$
\begin{equation}\nonumber
\begin{aligned}
f(t,u)=&\frac{1}{2}(1-t^2)\|\nabla u\|^2_2+\frac{1}{4}(1-t)\int_{\R^3}\int_{\R^3}\frac{|u(x)|^2|u(y)|^2}{|x-y|}dxdy\\
&-\frac{1}{p}\int_{\R^3}\left[1-t^{\frac{3}{2}(p-2)}A(t^{-1}x)\right]|u(x)|^pdx+\frac{t^2-1}{2}\|\nabla u\|^2_2+\\
&\frac{t^2-1}{8}\int_{\R^3}\int_{\R^3}\frac{|u(x)|^2|u(y)|^2}{|x-y|}dxdy-\dfrac{1}{p}\frac{t^2-1}{2}\int_{\R^3}\left[\frac{3}{2}(p-2)A(x)-\nabla A(x)\cdot x\right]|u|^pdx\\
=&\frac{t^2-1}{8}\int_{\R^3}\int_{\R^3}\frac{|u(x)|^2|u(y)|^2}{|x-y|}dxdy-\dfrac{1}{p}\frac{t^2-1}{2}\int_{\R^3}\left[\frac{3}{2}(p-2)A(x)-\nabla A(x)\cdot x\right]|u|^pdx\\
&-\frac{1}{p}\int_{\R^3}\left[1-t^{\frac{3}{2}(p-2)}A(t^{-1}x)\right]|u(x)|^pdx.
\end{aligned}
\end{equation}
Take the derivative of $f$ about $t$,
\begin{equation}\nonumber
\begin{aligned}
	\dfrac{\partial}{\partial t}f(t,u)=&\frac{t-1}{4}\int_{\R^3}\int_{\R^3}\frac{|u(x)|^2|u(y)|^2}{|x-y|}dxdy-\dfrac{1}{p}\int_{\R^3}\left[\frac{3}{2}(p-2)A(x)-\nabla A(x)\cdot x\right]|u|^pdx\\
	&+\dfrac{1}{p}t^{\frac{3}{2}(p-2)-1}\int_{\R^3}\left[\frac{3}{2}(p-2)A(t^{-1}x)-\nabla A(t^{-1}x)\cdot t^{-1}x\right]|u|^pdx.
\end{aligned}
\end{equation}
Then by (A2), we see that $f(\cdot,u)$ is strictly decreasing on $(0,1)$ and strictly decreasing on $(1,+\infty)$, i.e. $f(t,u)\geq f(1,u)=0$ for all $t>0$. Since $A(x)\equiv A_\infty$ satisfies (A2), thus we also have $f_{\infty}(t,u)\ge 0$ for all $t>0$.
\end{proof}

From the above behavior of $m(c)$, we deduce that there exists $c'$ such that $m(c')$ is achieved.

\begin{lemma}\label{2 drceasing}
$m(c)$ is non-increasing about $c$. Moreover, if $m(c)$ is achieved, then $m(c)$ is strictly decreasing about $c$.
\end{lemma}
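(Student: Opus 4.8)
The plan is to prove both assertions by producing, for $0<c_1<c_2$, competitors in $S(c_2)$ whose maximal energy along the scaling semiflow $t\mapsto u^t$ does not exceed $m(c_1)$. Two naive constructions fail because the three terms $B$, $C$, $\int A|u|^p$ scale with incompatible powers: a plain dilation $u\mapsto\sqrt{c_2/c_1}\,u$ distorts them by factors $\sqrt{c_2/c_1}$, $(c_2/c_1)^2$, $(c_2/c_1)^{p/2}$, while attaching to a near-minimizer of $m(c_1)$ a bump carrying the surplus mass at spatial infinity always adds at least $m_\infty(c_2-c_1):=\inf_{v\in S(c_2-c_1)}\max_{t>0}I_\infty(v^t)>0$. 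Instead I would lay the surplus mass \emph{on top of} the near-minimizer as a very low-frequency bump, so that its Coulomb interaction with the near-minimizer, as well as its own $\dot H^1$ and $L^p$ contributions, are all negligible.

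Fix $\varepsilon>0$ and $u\in\mathcal P(c_1)$ with $I(u)\le m(c_1)+\varepsilon$; by Lemma \ref{2 I(u)>I(u^t)}, $t=1$ is the global maximum of $t\mapsto I(u^t)$ with value $I(u)$. For $\rho>0$ set $\psi_\rho(x):=\rho^{-3/2}\psi_0(x/\rho)$ with $\psi_0\in C_c^\infty(\R^3)$, $\|\psi_0\|_2^2=c_2-c_1$, so that $\|\nabla\psi_\rho\|_2^2=\rho^{-2}\|\nabla\psi_0\|_2^2$, $C(\psi_\rho)=\rho^{-1}C(\psi_0)$, $\|\psi_\rho\|_p^p=\rho^{3-3p/2}\|\psi_0\|_p^p$ and $\int u\psi_\rho$ all tend to $0$ as $\rho\to\infty$, and put $v_\rho:=\sqrt{c_2/\|u+\psi_\rho\|_2^2}\,(u+\psi_\rho)\in S(c_2)$. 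Expanding the three terms of $I$, using the Hardy--Littlewood--Sobolev inequality to absorb the Coulomb cross terms and the bound $\bigl||a+b|^p-|a|^p\bigr|\le C(|a|^{p-1}|b|+|b|^p)$ for the nonlinear term, one obtains $I(v_\rho^t)=I(u^t)+o_\rho(1)$ uniformly for $t$ in compact subsets of $(0,\infty)$; moreover $B(v_\rho),C(v_\rho),\int A|v_\rho|^p$ converge to $B(u),C(u),\int A|u|^p$, whence, by the a priori estimates behind Lemma \ref{2 m(c)>0} and the proof of Lemma \ref{2 unique t_u}, the maximizer $t_\rho$ of $t\mapsto I(v_\rho^t)$ tends to $1$ and eventually lies in a fixed compact interval $[\underline t,\overline t]\ni1$. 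Then by Lemma \ref{2 m(c)defination}
$$m(c_2)\le\max_{t>0}I(v_\rho^t)=\max_{\underline t\le t\le\overline t}I(u^t)+o_\rho(1)=I(u)+o_\rho(1)\le m(c_1)+\varepsilon+o_\rho(1),$$
and letting $\rho\to\infty$ and then $\varepsilon\downarrow0$ gives $m(c_2)\le m(c_1)$.

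When $m(c_1)=I(u_{c_1})$ is attained, this is not enough: a low-frequency bump only perturbs the level by $o_\rho(1)$, so a \emph{strict} drop must come from genuinely lowering the energy, which is where one must use that $u_{c_1}$ solves \eqref{1.1.0}. The natural route is to test $m(c_2)$ against a dilation $\sqrt{c_2/c_1}\,u_{c_1}$ (or a suitable perturbation of $u_{c_1}$) and, using the Euler--Lagrange identity $B(u_{c_1})+C(u_{c_1})-\int A|u_{c_1}|^p=-\lambda_{c_1}\|u_{c_1}\|_2^2$ together with $P(u_{c_1})=0$ and $(A2)$--$(A3)$, to show $\max_{t>0}I((\sqrt{c_2/c_1}\,u_{c_1})^t)<m(c_1)$; combined with the monotonicity already proved this yields $m(c)<m(c_1)$ for all $c>c_1$. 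Carrying out this last estimate is the main obstacle, and a secondary delicate point — already present in the monotonicity part — is the uniform-in-$\rho$ control of $t_\rho$: one must exclude its escape to $0$ or $\infty$, which is exactly what the lower bound $\|\nabla v\|_2\ge\xi_0$ of Lemma \ref{2 m(c)>0} and the strict monotonicity in $t$ established in the proof of Lemma \ref{2 unique t_u} provide.
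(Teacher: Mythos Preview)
Your bump construction for the non-increasing part is sound and, interestingly, does not use $(A3)$ at all: the low-frequency additive perturbation $\psi_\rho$ has vanishing $\dot H^1$, $L^p$ and Coulomb contributions, the cross terms are controlled by HLS and H\"older as you indicate, and the bound $\bigl|\int A(t^{-1}x)(|v_\rho|^p-|u|^p)\,dx\bigr|\le\|A\|_\infty\||v_\rho|^p-|u|^p\|_1$ gives the uniformity in $t$ you need to localize $t_\rho$. This is a genuinely different route from the paper.

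The gap is in the strict part: you yourself flag that ``carrying out this last estimate is the main obstacle'', and indeed it is. Testing against $\sqrt{c_2/c_1}\,u_{c_1}$ multiplies $B$, $C$, $\int A|u|^p$ by $c_2/c_1$, $(c_2/c_1)^2$, $(c_2/c_1)^{p/2}$ respectively; the first two increases push the energy \emph{up}, and the Euler--Lagrange/Pohozaev identities do not by themselves force the third decrease to dominate at the new maximizer. So the idea you sketch does not close without a further, non-obvious ingredient. The paper sidesteps this completely by a different scaling that is tailored to the three exponents simultaneously: for $\xi=\sqrt{c_2/c_1}>1$ it sets
\[
v(x)=\xi^{-5}u(\xi^{-4}x),
\]
which satisfies $\|v\|_2^2=\xi^2\|u\|_2^2=c_2$, $B(v)=\xi^{-6}B(u)$, $C(v)=C(u)$, and $\int A(x)|v|^p\,dx=\xi^{12-5p}\int A(\xi^4 x)|u|^p\,dx$. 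Picking $t_0$ with $v^{t_0}\in\mathcal P(c_2)$ and comparing $I(v^{t_0})$ with $I(u^{t_0})$, the gradient term drops by the strictly negative amount $\tfrac{t_0^2}{2}(\xi^{-6}-1)B(u)$, the Coulomb term is unchanged, and the nonlinear term is handled by $(A3)$, which gives $(\xi^4)^{3/2}A(\xi^4 y)\ge A(y)$ and hence
\[
\xi^{12-5p}\int A\!\left(\xi^4\tfrac{x}{t_0}\right)|u|^p\,dx\ge \xi^{6-5p}\int A\!\left(\tfrac{x}{t_0}\right)|u|^p\,dx \ge \int A\!\left(\tfrac{x}{t_0}\right)|u|^p\,dx\quad\text{(wrong direction would hurt, but the sign in $I$ is negative)}.
\]
Since this term enters $I$ with a minus sign, one obtains $I(v^{t_0})<I(u^{t_0})\le I(u)$ by Lemma~\ref{2 I(u)>I(u^t)}, which yields both monotonicity (take $u=u_n$ a minimizing sequence) and strict monotonicity (take $u$ the minimizer) in one stroke. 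This is precisely the missing idea: an \emph{anisotropic} dilation $u\mapsto\xi^{-a}u(\xi^{-b}\cdot)$ chosen so that the Coulomb energy is invariant ($4a=5b$) and the $L^2$ norm scales correctly ($2a-3b=-2$), forcing $a=5$, $b=4$; $(A3)$ is exactly what is needed to control the nonautonomous nonlinear term under this scaling.
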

\begin{proof}
    For any $c_2>c_1>0$, there exists $\left\{u_n \right\} \subset \mathcal{P}(c_1)$ such that $I(u_n)< m(c_1)+\frac{1}{n}$. Let $\xi:=\sqrt{\dfrac{c_2}{c_2}}\in (1,\infty)$ and $v_n(x)=\xi^{-5}u_n(\xi^{-4}x)$, we have $\|v_n\|^2_2=\xi^2\|u_n\|^2_2 =c_2$. By Lemma \ref{2 unique t_u}, there exist $t_n$ such that $v_n^{t_n} \in \mathcal{P}(c_2)$. Then it follows from (A3), $\frac{3}{2}-\frac{5}{4}p<0$ and Lemma \ref{2 I(u)>I(u^t)} that
    \begin{equation}\nonumber
      \begin{aligned}
        &m(c_2)\leq I(v_n^{t_n}) \\
        =&\frac{1}{2}t_n^2\xi^{-5}\|\nabla u\|^2_2+\frac{1}{4}t_n\int_{\R^3}\int_{\R^3}\frac{|u_n(x)|^2|u_n(y)|^2}{|x-y|}dxdy
        -\frac{1}{p}t_n^{3-\frac{3}{2}(p-2)}\xi^{12-5p}\int_{\R^3}A\left(\xi^4\frac{x}{t_n}\right)|u_n|^pdx\\
        =&I(u_n^{t_n})+t_n^2(\xi^{-6}-1)\|\nabla u_n\|^2_2
        +\frac{1}{p}t_n^{\frac{3}{2}(p-2)}\int_{\R^3}\left[A\left(\frac{x}{t_n}\right)-\left(\xi^4\right)^{3-\frac{5}{4}p}A\left(\frac{x}{t_n}\xi^4\right)\right]|u_n|^pdx \\
        <&I\left(u_n^{t_n}\right)+\frac{1}{p}t_n^{\frac{3}{2}(p-2)}\int_{\R^3}\left[A\left(\frac{x}{t_n}\right)-\left(\xi^4 \right)^ {\frac{3}{2}}A\left(\frac{x}{t_n}\xi^4\right)\right]\left(\xi^4\right)^{\frac{3}{2}-\frac{5}{4}p}|u_n|^pdx\\
        \leq &I\left(u_n^{t_n}\right)+\frac{1}{p}t_n^{\frac{3}{2}(p-2)}\int_{\R^3}\left[A\left(\frac{x}{t_n}\right)-\left(\xi^4 \right)^{\frac{3}{2}}A\left(\frac{x}{t_n}\xi^4 \right) \right]|u_n|^pdx\\
        \leq &I\left(u_n^{t_n}\right)\leq I(u_n)<m(c_1)+\frac{1}{n}.
        \end{aligned}
    \end{equation}
Letting $n\to\infty$, we obtain $m(c_2)\leq m(c_1)$.\par
Next, we assume that $m(c)$ is achieved, i.e., there exists $\bar{u}\in \mathcal{P}$ such that $I(\bar{u})=m(c)$. For any $c'>c>0$, let $\xi:=\sqrt{\dfrac{c'}{c}}\in (1,\infty)$ and $\bar{v}(x)=\xi^{-5}\bar{u}(\xi^{-4}x)$, we have $\|\bar{v}\|^2_2=\xi^2\|\bar{u}\|^2_2 =c'$. By Lemma \ref{2 unique t_u}, there exists $t_0$ such that $\bar{v}^{t_0} \in \mathcal{P}(c')$. Then it follows from (A3), $\frac{3}{2}-\frac{5}{4}p<0$ and Lemma \ref{2 I(u)>I(u^t)} that
\begin{equation}\nonumber
      \begin{aligned}
        m(c')\leq& I(\bar{v}^{t_0}) \\
        <&I\left(\bar{u}^{t_0}\right)+\frac{1}{p}t_0^{\frac{3}{2}(p-2)}
        \int_{\R^3}\left[A\left(\frac{x}{t_0}\right)-\left(\xi^4 \right)^ {\frac{3}{2}}A\left(\frac{x}{t_0}\xi^4\right)\right]\left(\xi^4\right)^{\frac{3}{2}-\frac{5}{4}p}|\bar{u}|^pdx\\
        \leq& I\left(\bar{u}^{t_0}\right)\leq I(\bar{u})=m(c).
        \end{aligned}
    \end{equation}
This shows that $m(c')<m(c)$.
\end{proof}

By Lemma \ref{2 m(c)defination}, we have
\begin{equation}\label{2 equ m_infty>m}
    m(c)\leq m_{\infty}(c).
\end{equation}
In view of \eqref{2 equ m_infty>m}, we can prove the following lemma.
\begin{lemma}\label{2 achieved}
 $m(c)$ is achieved.
\end{lemma}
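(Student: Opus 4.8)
The plan is to run a concentration--compactness argument on a minimizing sequence for $m(c)$, much as in the proof of Lemma~\ref{1 achieved}, exploiting that the Pohozaev manifold $\mathcal P(c)$ is a natural constraint together with the positivity and (strict, when attained) monotonicity of $m$ from Lemmas~\ref{2 m(c)>0} and~\ref{2 drceasing}, its continuity (Lemma~\ref{2 m(c) continuous}), the comparison $m(c)\le m_\infty(c)$ of~\eqref{2 equ m_infty>m}, and the fiber structure of Lemmas~\ref{2 I(u)>I(u^t)}--\ref{2 m(c)defination}. \textbf{Step 1 (a bounded Palais--Smale sequence).} I would take $\{u_n\}\subset\mathcal P(c)$ with $I(u_n)\to m(c)$. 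Writing $I(u_n)=I(u_n)-\tfrac{2}{3(p-2)}P(u_n)\ge\big(\tfrac12-\tfrac{2}{3(p-2)}\big)\|\nabla u_n\|_2^2$ (using $-\nabla A(x)\cdot x\ge0$ and $\tfrac32(p-2)>2$, as in the proof of Lemma~\ref{2 m(c)>0}(ii)) shows $\{u_n\}$ is bounded in $H^1(\R^3)$. By Ekeland's variational principle on $\mathcal P(c)$ together with the natural-constraint property of $\mathcal P(c)$ (which follows from the strict fiber analysis of Lemmas~\ref{2 I(u)>I(u^t)}--\ref{2 unique t_u}), I may in addition assume $I'(u_n)+\lambda_nu_n\to0$ in $H^{-1}(\R^3)$ for some $\lambda_n\in\R$ and $P(u_n)\to0$; pairing with $u_n$ shows $\{\lambda_n\}$ is bounded. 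Passing to a subsequence, $u_n\rightharpoonup\bar u$ in $H^1$, $u_n\to\bar u$ in $L^q_{loc}$ for $q\in[2,2^*)$ and a.e., and $\lambda_n\to\bar\lambda$.

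\textbf{Step 2 (no vanishing; the case $\bar u\ne0$).} If $\sup_{y\in\R^3}\int_{B_1(y)}|u_n|^2\to0$, then $u_n\to0$ in $L^q(\R^3)$ for $q\in(2,2^*)$, hence $\int A|u_n|^p\to0$ and, since $-\nabla A(x)\cdot x\to0$ at infinity, also $\int(\nabla A\cdot x)|u_n|^p\to0$; then $P(u_n)=0$ forces $\|\nabla u_n\|_2\to0$, contradicting Lemma~\ref{2 m(c)>0}(i). So vanishing is ruled out. If $\bar u\ne0$, then $\bar u$ solves $I'(\bar u)+\bar\lambda\bar u=0$, so $P(\bar u)=0$ by the Pohozaev identity (Lemma~\ref{1.2 pohozaev}, whose derivation is valid for all $p\in(2,6)$); thus $\bar u\in\mathcal P(c_1)$ with $c_1:=\|\bar u\|_2^2\in(0,c]$. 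Writing $w_n:=u_n-\bar u\rightharpoonup0$, the Brezis--Lieb splittings of Lemmas~\ref{1 le 2.1}--\ref{1 le 2.2} give $I(u_n)=I(\bar u)+I(w_n)+o(1)$ and $P(u_n)=P(\bar u)+P(w_n)+o(1)$, so $P(w_n)\to0$ and $\|w_n\|_2^2\to c-c_1$. Projecting $w_n$ onto $\mathcal P(\|w_n\|_2^2)$ and using $f(t,w_n)\ge0$ (Lemma~\ref{2 f(t,u)}) yields $\liminf I(w_n)\ge m(c-c_1)\ge m(c)$, while in the degenerate case $\|\nabla w_n\|_2\to0$ one has $I(w_n)\to0$ directly; combined with $m(c)=I(\bar u)+\lim I(w_n)$ and $I(\bar u)\ge m(c_1)>0$ this is contradictory unless $c_1=c$ (in the degenerate case one uses instead that $m(c_1)$ would then be attained, contradicting the strict monotonicity of Lemma~\ref{2 drceasing}). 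Hence $c_1=c$, so $u_n\to\bar u$ in $L^2$, hence in every $L^q$, $q\in[2,2^*)$; letting $n\to\infty$ in $P(u_n)=0$ gives $\|\nabla u_n\|_2\to\|\nabla\bar u\|_2$, so $u_n\to\bar u$ in $H^1$, $\bar u\in\mathcal P(c)$ and $I(\bar u)=m(c)$.

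\textbf{Step 3 (the case $\bar u=0$).} Here non-vanishing provides $\{y_n\}\subset\R^3$ with $\int_{B_1(y_n)}|u_n|^2\ge\delta/2>0$, and $\bar u=0$ forces $|y_n|\to\infty$, so $A(\cdot+y_n)\to A_\infty$ locally uniformly. Using this together with $u_n\rightharpoonup0$, the decay of $-\nabla A(x)\cdot x$, and the estimate~\eqref{2 equa.2 third0limit}, one obtains $I(u_n)=I_\infty(u_n)+o(1)$, $P_\infty(u_n)=P(u_n)+o(1)\to0$, and $I_\infty'(u_n)+\lambda_nu_n\to0$ in $H^{-1}$. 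Set $v_n(x):=u_n(x+y_n)\rightharpoonup\hat u\ne0$; by translation invariance of $I_\infty$, $P_\infty$ and $\|\cdot\|_{H^1}$, the pair $(v_n,\lambda_n)$ is a bounded Palais--Smale sequence for $I_\infty|_{S(c)}$ with $P_\infty(v_n)\to0$, so $\hat u$ solves the limiting equation and $P_\infty(\hat u)=0$, i.e.\ $\hat u\in\mathcal P_\infty(c_2)$ with $c_2:=\|\hat u\|_2^2\in(0,c]$. Running the argument of Step~2 for $I_\infty$ (legitimate since the constant $A_\infty$ satisfies (A1)--(A3); one uses $m_\infty(c)\ge m(c)>0$ and the $I_\infty$-analogues of Lemmas~\ref{2 m(c)defination} and~\ref{2 drceasing}) forces $c_2=c$, hence $v_n\to\hat u$ in $H^1$, $\hat u\in\mathcal P_\infty(c)$ and $I_\infty(\hat u)=m(c)$. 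Finally let $t_0>0$ be the unique dilation with $\hat u^{t_0}\in\mathcal P(c)$ (Lemma~\ref{2 unique t_u}); since $A(x)\ge A_\infty$ implies $I(w)\le I_\infty(w)$ for all $w\in H^1(\R^3)$, and $\hat u\in\mathcal P_\infty(c)$ implies $I_\infty(\hat u)=\max_{t>0}I_\infty(\hat u^t)$ (Lemma~\ref{2 I(u)>I(u^t)} applied to $I_\infty$),
\[
m(c)\le I(\hat u^{t_0})\le I_\infty(\hat u^{t_0})\le\max_{t>0}I_\infty(\hat u^t)=I_\infty(\hat u)=m(c),
\]
so $m(c)$ is attained at $\hat u^{t_0}\in\mathcal P(c)$.

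\textbf{The main obstacle.} The computations are routine; the real work is the compactness bookkeeping of Steps~1--3. Extracting a Palais--Smale sequence with \emph{bounded} Lagrange multipliers out of a bare minimizer on $\mathcal P(c)$ relies on the natural-constraint property of the Pohozaev manifold; excluding vanishing rests on the uniform lower bound $\|\nabla u_n\|_2\ge\xi_0$ of Lemma~\ref{2 m(c)>0}(i); and the crux is excluding the escape of mass to infinity, where the strict positivity $m(c)>0$, the strict monotonicity of $m$ and $m_\infty$ when attained, and the comparison $m(c)\le m_\infty(c)$ of~\eqref{2 equ m_infty>m} must all be used---without them a dichotomous minimizing sequence with part of its mass running to infinity could not be ruled out.
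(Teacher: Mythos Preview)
Your argument is essentially correct and reaches the same endpoint as the paper (including the identical final chain $m(c)\le I(\hat u^{t_0})\le I_\infty(\hat u^{t_0})\le I_\infty(\hat u)=m(c)$ in the $\bar u=0$ case), but the route you take in Steps~1--2 is genuinely different from the paper's. The paper never invokes Ekeland's principle or Palais--Smale sequences: it works directly with a bare minimizing sequence $\{u_n\}\subset\mathcal P(c)$, uses the Brezis--Lieb splittings $I(u_n)=I(\bar u)+I(w_n)+o(1)$ and $P(u_n)=P(\bar u)+P(w_n)+o(1)$, introduces the auxiliary functional $\Phi(u):=I(u)-\tfrac12 P(u)$, observes that $\Phi(u)>0$ for every $u\ne0$ (from (A1), \eqref{A2 equ1.3} and $\tfrac34(p-2)>1$), and then appeals to the argument of Lemma~2.15 in \cite{sx2020nonautonomous}, driven by the fiber inequality of Lemma~\ref{2 f(t,u)}, to conclude simultaneously $P(\bar u)=0$, $I(\bar u)=m(c)$ and $\|\bar u\|_2^2=c$. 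In the $\bar u=0$ branch it does the same with $\Psi_\infty(u):=I_\infty(u)-\tfrac12 P_\infty(u)$. Your approach instead obtains $P(\bar u)=0$ \emph{a priori} from the limiting equation via the Pohozaev identity of Lemma~\ref{1.2 pohozaev}, which forces you first to upgrade the minimizing sequence to a constrained Palais--Smale sequence with bounded multipliers. The trade-off: your route makes the role of the Pohozaev identity transparent and reduces the mass-identification to a clean monotonicity contradiction (Lemma~\ref{2 drceasing}), but it costs you the extra assertion that $\mathcal P(c)$ is a \emph{natural constraint} for $I|_{S(c)}$---a fact that is indeed a consequence of the nondegenerate-maximum structure of the fiber map (one checks $\frac{d}{dt}P(u^t)\big|_{t=1}<0$ on $\mathcal P(c)$), but which is not proved anywhere in the paper and does not follow from Lemmas~\ref{2 I(u)>I(u^t)}--\ref{2 unique t_u} without an additional computation. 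The paper's direct $\Phi$-approach sidesteps this issue entirely.
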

\begin{proof}
From Lemma \ref{2 unique t_u} and Lemma \ref{2 m(c)>0}, $\mathcal{P}(c)\neq \emptyset $ and $m(c)>0$. Let $\{u_n\}\subset\mathcal{P}(c)$ be such that $I\left( {{u}_{n}} \right)\to m\left( c \right)$. By \eqref{2 equ5.3} we have
\begin{equation}\nonumber
	m\left( c \right)+o\left( 1 \right)=I\left( {{u}_{n}} \right)\ge \left( \frac{1}{2}-\frac{2}{3\left( p-2 \right)} \right)\|\nabla u_n\|^2_2,
\end{equation}
which implies that $\left\{ u_n \right\}$ is bounded in $H^1(\R^3)$. Up to a subsequence,
\begin{equation}\nonumber
u_n\rightharpoonup \bar{u}\text{ in }H^1(\R^3),\; u_n\to \bar{u}\text{ in }L^s_{loc}(\R^3) \text{ for }s\in [2,2^*),\; u_n\to \bar{u}\text{ a.e. }\R^3.
\end{equation}

Case (i) $\bar{u}\ne 0$. Let ${{w}_{n}}={{u}_{n}}-\bar{u}$ and by Lemma \ref{1 le 2.2}, we get
\begin{equation}\nonumber
	I\left( {{u}_{n}} \right)=I\left( \bar{u} \right)+I\left( {{w}_{n}} \right)+o\left( 1 \right)\; \text{and}\; P\left( {{u}_{n}} \right)=P\left( \bar{u} \right)+P\left( {{w}_{n}} \right)+o\left( 1 \right).
\end{equation}
For any $u \in {{H}^{1}}(\R^3)$, define
\begin{equation}\nonumber
	\begin{aligned}
		\Phi \left( u \right):=&I\left( u \right)-\frac{1}{2}P\left( u \right)=\frac{1}{8}\int_{\R^3}\int_{\R^3}\frac{|u(x)|^2|u(y)|^2}{|x-y|}dxdy-\frac{1}{p}\int_{\R^3}{A\left( x \right){{\left| u \right|}^{p}}}dx\\
		&+\frac{1}{2p}\int_{\R^3}{\left[ \frac{3}{2}\left( p-2 \right)A\left( x \right)-\nabla A\left( x \right)\cdot x \right]{{\left| u \right|}^{p}}}dx.
	\end{aligned}
\end{equation}
From Lemma \ref{1 lem3.1}, we see that
$$\Phi(u) \geq \frac{1}{8}\int_{\R^3}\int_{\R^3}\frac{|u(x)|^2|u(y)|^2}{|x-y|}dxdy +\dfrac{1}{p}\int_{\R^3}\left[\frac{3}{4}(p-2)-1\right]A(x)|u|^pdx.$$
Since $p\in (\frac{10}{3},6)$ and $\frac {3}{4}\left( p-2 \right)-1>0 $, then $\Phi \left( u \right)>0$ for any $u\in {H}^{1}(\R^3)\setminus\{0\} $.\par
Use the same argument of Lemma $2.15$ in $\cite{sx2020nonautonomous}$, we have
\begin{equation}\nonumber
	I(\bar{u})=m\left( c \right),\;P(\bar{u})=0,\; \|\bar{u} \|_{2}^{2}=c.
\end{equation}
This shows that $m\left( c \right)$ is achieved. \par
Case (ii) $\overline{u}=0$. Up to a subsequence,
\begin{equation}\nonumber
	u_n\rightharpoonup 0\text{ in }H^1(\R^3),\; u_n\to 0\text{ in }L^s_{loc}(\R^3) \text{ for }s\in [2,2^*),\; u_n\to 0\text{ a.e. }\R^3.
\end{equation}

Similarly, by Lemma \ref{1 lem3.1}, for any $\varepsilon>0$, there exists $R>0$ large enough such that
$$|A_{\infty}-A(x)|<\varepsilon~~ \forall x\in B^c_R(0).$$
Then by $u_n\to 0 \; in \; L_{loc}^q(\R^3)$ and Sobolev imbedding theorem we have
\begin{equation}\label{1 equa.2 third0limit}
		\begin{aligned}
			\left|\int_{\R^3}(A_{\infty}-A(x))|u_n|^pdx\right|\le o(1)+ C\varepsilon,
		\end{aligned}
	\end{equation}
and
\begin{equation}\label{2 equa.1 third0limit}
	\begin{aligned}
		\left|\int_{\R^3}\left[\frac{3}{2}(p-2)A(x)-\nabla A(x)\cdot x-\frac{3}{2}(p-2)A_{\infty}\right]|u_n|^pdx\right|\le o(1)+ C\varepsilon.
	\end{aligned}
\end{equation}
Therefore, by \eqref{1 equa.2 third0limit} and \eqref{2 equa.1 third0limit} and the arbitrariness of $\varepsilon$, we obtain
\begin{equation}\label{2 equ3.8. I{infty} to m(c)}
	I_{\infty}(u_n)\to m(c),\; P_{\infty}(u_n)\to 0\text{ as }n\to \infty,
\end{equation}
where $$P_{\infty}(u_n)=\dfrac{1}{2}\|\nabla u_n\|^2_2+\dfrac{1}{4}\int_{\R^3}\int_{\R^3}\frac{|u_n(x)|^2|u_n(y)|^2}{|x-y|}dxdy-\dfrac{3(p-2)}{2p}A_{\infty}\int_{\R^3}|u_n(x)|^pdx.$$
By Lemma \ref{2 m(c)>0}-(i) and \eqref{2 equ3.8. I{infty} to m(c)}, we have
\begin{equation}\label{20231126-e10}
	0<\xi_0^2 \le \|\nabla u_n\|^2_2=\dfrac{3(p-2)}{p}A_{\infty}\int_{\R^3}|u_n(x)|^pdx-\dfrac{1}{2}\int_{\R^3}\int_{\R^3}\frac{|u_n(x)|^2|u_n(y)|^2}{|x-y|}dxdy,
\end{equation}
where $\xi_0$ only depends on $c$.
\eqref{20231126-e10} implies that $\{u_n\}$ doesn't vanish. Indeed, if $$u_n\to 0\quad \text{in}~L^p(\R^3),$$ then by \eqref{20231126-e10} we see that
$$\lim\limits_{n\to\infty}P_{\infty}(u_n)\geq \frac{1}{2}\xi_0^2,$$
which contradicts with \eqref{2 equ3.8. I{infty} to m(c)}.
Hence $\{u_n\}$ doesn't vanish. Using Lions' concentration compactness principle $\cite{lionI1984,lionII1984}$, there exist $\delta>0$ and $\{y_n\} \subset \R^3$ such that
\begin{equation}\nonumber
	\int_{B_1(y_n)}|u_n|^2dx=\int_{B_1(0)}|u_n(x+y_n)|dx>\dfrac{\delta}{2}.
\end{equation}
Therefore, let $\hat{u}_n(x)=u_n(x+y_n)$, there exists $\hat{u}\in H^1(\R^3)\setminus \{0\}$ such that up to a subsequence
\begin{equation}\label{A2 equ10.4}
	\hat{u}_n\rightharpoonup \hat{u}\text{ in }H^1(\R^3),\; \hat{u}_n\to \hat{u}\text{ in }L^s_{loc}(\R^3) \text{ for }s\in [2,2^*),\; \hat{u}_n\to \hat{u}\text{ a.e. }\R^3.
\end{equation}
Let $w_n=\hat{u}_n-\hat{u}$, then \eqref{A2 equ10.4}, Lemma \ref{1 le 2.1} and Lemma \ref{1 le 2.2} yield
\begin{equation}\label{A2 equ10.5 F P'{infty}}
	I_{\infty}(\hat{u}_n)=I_{\infty}(\hat{u})+I_{\infty}(w_n)+o(1),\; P_{\infty}(\hat{u}_n)=P_{\infty}(\hat{u})+P_{\infty}(w_n)+o(1).
\end{equation}
Set
\begin{equation}\nonumber
\begin{aligned}
    \Psi_{\infty}(u):=&I_{\infty}(u)-\dfrac{1}{2}P_{\infty}(u)=\dfrac{A_{\infty}}{p}\left(\dfrac{3(p-2)}{2}-1\right)\int_{\R^3}|u|^pdx
    +\frac{1}{8}\int_{\R^3}\int_{\R^3}\frac{|\hat{u}_n(x)|^2|\hat{u}_n(y)|^2}{|x-y|}dxdy.
\end{aligned}
\end{equation}
For $u\in H^1(\R^3)\setminus \{0\}$ we see $\Psi_{\infty}(u)>0$. Thanks to \eqref{A2 equ10.4}, \eqref{A2 equ10.5 F P'{infty}} and Lemma \ref{2 m(c)>0}, we get
\begin{equation}\nonumber
\Psi_{\infty}(w_n)=m(c)-\Psi_{\infty}(\hat{u})+o(1)
\end{equation}
and
\begin{equation}\nonumber
	P_{\infty}(w_n)=-P_{\infty}(\hat{u})+o(1).
\end{equation}
Using the
same argument of Lemma $2.15$ in $\cite{sx2020nonautonomous}$, we have
\begin{equation}\nonumber
	I_{\infty}(\hat{u})=m(c),\; P_{\infty}(\hat{u})=0.
\end{equation}
By Lemma \ref{2 unique t_u}, there is $\hat{t}>0$ such that $\hat{u}^{\hat{t}}\in \mathcal{P}(c)$. Then by
(A1) and Lemma \ref{2 I(u)>I(u^t)} we obtain
\begin{equation}\nonumber
	m(c)\le I(\hat{u}^{\hat{t}}) \le I_{\infty}(\hat{u}^{\hat{t}})\le I_{\infty}(\hat{u})=m(c).
\end{equation}
This completes the conclusion.

\end{proof}

{\bf The proof of Theorem \ref{2 th}}: According to Lemma \ref{2 achieved}, for any
$c>0$, there exists $\bar{u}_c \in \mathcal{P}(c)$ such that
$$ I(\bar{u}_c)=m(c),I'|_{S(c)}(\bar{u}_c)=0.$$
Then by Lagrange multiplier theorem, there exists $\lambda_c\in \mathbb{R}$ such that
 $I'(\bar{u}_c)+\lambda_c\bar{u}_c=0.$
Therefore, we find a solution $(\bar{u}_c,\lambda_c)$ of equation $\eqref{1.1.0}$.

\vskip4mm
{\section{ Conflict of Interest }}
\setcounter{equation}{0}
The authors declared that they have no conflict of interest.

\vskip4mm

\end{document}